\xpatchcmd\swappedhead{~}{.~}{}{}
\theoremstyle{plain}
\newtheorem{FactCounter}{dummy}[section]
\newtheorem{Theorem}[FactCounter]{Theorem} %
\newtheorem{Proposition}[FactCounter]{Proposition} %
\newtheorem{Lemma}[FactCounter]{Lemma} %
\newtheorem{Corollary}[FactCounter]{Corollary} %
\theoremstyle{definition}
\newtheorem{Definition}[FactCounter]{Definition} %
\theoremstyle{remark}
\newtheorem{Remark}[FactCounter]{Remark} %
\newtheorem{Example}[FactCounter]{Example} %
\DeclareMathOperator{\tr}{tr}
\DeclareMathOperator{\op}{op}
\DeclareMathOperator{\End}{End}
\DeclareMathOperator{\Rad}{rad}
\DeclareMathOperator{\rad}{rad}
\DeclareMathOperator{\im}{Im}
\DeclareMathOperator{\Hom}{Hom}
\DeclareMathOperator{\Soc}{soc}
\DeclareMathOperator{\soc}{soc}
\DeclareMathOperator{\Ker}{Ker}
\DeclareMathOperator{\Stab}{Stab}
\DeclareMathOperator{\GL}{GL}
\DeclareMathOperator{\ord}{ord}
\DeclareMathOperator{\Inf}{Inf}
\newcommand{\ind}{\mathord\uparrow} 
\DeclareMathOperator{\sign}{sign}
\DeclareMathOperator{\codim}{codim}
\DeclareMathOperator{\modd}{mod-}
\DeclareMathOperator{\charr}{char}
\begin{document}
\title{Decompositions of Group Algebras as a Direct Sum of Projective Indecomposable Modules and of Blocks in Positive Characteristic}

\author{Eun H. Park}

\school{Mathematics}
\faculty{Science and Engineering}

\beforeabstract

The dissertation focuses on decomposing a group algebra $kG$ over a field of positive characteristic into a direct sum of projective indecomposable modules. Such a decomposition is obtained using Theorem~\ref{regmodAdecompassumofprojcovers} together with the Artin--Wedderburn Theorem. As the Artin--Wedderburn Theorem is for semisimple modules, it can be used for the quotient algebra $A/\rad A$, known as the head, which is semisimple. The main goal of the dissertation is to explicitly decompose given group algebras as a direct sum of their projective indecomposable modules.  

To achieve this, we determine the radical series of each projective indecomposable module of the given group algebras. For a group algebra over characteristic $p$, each projective indecomposable module has a simple head that is isomorphic to its socle. Projective covers and injective envelopes are used to construct these modules. A cyclic group algebra is uniserial, and a $p$-group algebra over characteristic $p$ is itself a projective indecomposable module. Using these properties, we explicitly find all projective indecomposable modules for the following group algebras over characteristic $2$: the Klein four-group, the alternating group $A_4$, and the alternating group $A_5$. Their relationships play an important role in this process.  

Since $p$-group algebras have trivial head and trivial socle, the Klein four-group algebra has a corresponding radical series. Its decomposition into a direct sum of projective indecomposable modules is described explicitly, and the Cartan matrix of a group algebra is obtained by calculating the multiplicities of simples in its projective indecomposable modules.  

The topic is then extended slightly by considering the unique decomposition of a group algebra into a direct sum of particular modules known as \emph{blocks}. For $kA_4$, the primitive orthogonal idempotents are calculated, and since $kA_4$ has one block, it is equal to its block decomposition. For $kA_5$, we show that there are two blocks, determined by checking the nonzero entries in its Cartan matrix.

\afterabstract

\prefacesection{Acknowledgements}

First and foremost, I would like to express my sincere gratitude to Prof. Eaton for his invaluable guidance and support throughout my dissertation project. Despite time constraints, he helped me understand the topics more deeply and was always welcoming whenever I reached out with questions.

I am also grateful to my colleagues Luca, Amy, Vinnie, and Michael for their encouragement and for sharing discussions on degree progress, academic life, and the dissertation journey. Their warm advice motivated me greatly. I would like to thank Janaid for offering career advice and support during the course, and Alina for her friendship throughout my degree.

I would also like to acknowledge my academic advisor, Dr. Vahagn. Even though I went to his office very often to discuss my career path, he was always welcoming me all the time and gave me very thoughtful advice. I was able to achieve lots of things during the degree mostly by his help.

As one of academic representatives, I would like to thank all the master's pure mathematics and mathematical logic students that we tried so hard to achieve this level and we finally succeed throughout the journey.

I was really honoured to take Galois Theory and Category Theory courses taught by Prof. Kessar and Dr. Gamblino. I learned not only academic contents, but also educating perspective. The learning experience was invaluable. Even though I went to their offices a lot during the courses to ask questions, they are so welcoming. Especially because this topic is relevant to non-commutative algebra, all of Dr. Cesar's contents such as additional readings, lecture video recordings, additional exercises, and his lectures were invaluable resources for developing the background needed for this project. I would also like to thank him for giving me valuable opportunities to engage with the wider mathematical community.

Finally, and most importantly, I am deeply thankful to my family, my father, mom and my sister for their constant love, encouragement, and support throughout the journey.

\afterpreface


\chapter{Introduction}
The dissertation is under the general assumption that any algebra $A$ is finitely generated. When $A$ if finitely generated, $A$ is both Artinian and Noetherian. Some theorems in the dissertation assume that $A$ is Artinian, but it can be assumed as true when $A$ is finitely generated.

In Chapter 2, the concept of regular module, group algebras, projective module, idempotent, and radical is introduced as preliminaries. We show that projective indecomposable modules have one-to-one correspondence with simples. To be specific, projective indecomposable modules are the projective covers of simples. Artin-Wedderburn theorem gives a decomposition of a semisimple module into a direct sum of matrix algebras. By this theorem and using the fact that the head of a regular module is semisimple, a regular module can be decomposed into a direct sum of projective covers of simples with the multiplicity of simple in its head. We proceed this further, but not the main content of the topic, by using the fact that a unique ring decomposition as the direct sum of indecomposable two-sided ideals corresponds with the unique expression of the identity as the sum of primitive orthogonal central idempotents. The direct summand of the unique ring decomposition is called a block. We also want to decompose a regular module as the direct sum of blocks. This chapter gives theoretic justification on such decompositions of a regular module.

To decompose a group algebra as a direct sum of projective indecomposable modules, it starts with investigating how the radical series of projective indecomposable module is structured. The projective cover of the module where its head is simple is used when we want to find a quotient of a projective indecomposable module. Similarly, the dual concept of projective cover, the injective envelope of a module, where its socle is simple, is useful when we want to find a submodule of a projective indecomposable module. In Chapter 3, the radical series structure of a projective indecomposable is the main focus. This chapter is mainly about the dualities, such as projective cover and injective envelope, and radical and socle. A projective indecomposable module of a group algebra over characteristic $p$ has a property that its head is isomorphic to its socle. It means that the top radical layer and the bottom radical layer are the same simple module. The multiplicity of a simple in a module is also introduced in this chapter, in that it is useful to find a submodule or a quotient of the module as it is related to the dimension of a homomorphism space between a projective indecomposable module and a module. So the multiplicity of a simple in a module can be useful to find the radical structure of a module. When finding submodules and quotients of a module, we investigate corresponding homomorphism spaces. 

In Chapter 4-6, we decompose given group algebras: the Klein four-group, the alternating four-group, and the alternating five group over characteristic 2 as the direct sum of projective indecomposable modules. Before that, we see a simple case, when a group is cyclic. One theorem shows that its radical series is uniserial. For $p$-Group algebras, they have a unique simple module, that is, the trivial module. By using this fact, a $p$-Group algebras is itself the projective indecomposable module, so investigating the projective indecomposable module directly gives $p$-Group algebra structure. A $p$-Group algebra has the trivial head and the trivial socle. By using these facts, we derive the structure of the Klein four-group algebra. Simple $kA_4$-modules are obtained from simple $kC_3$-modules. Given indecomposable modules, we will find the projective covers and the injective envelopes of the indecomposable modules to find the projective indecomposable modules. Then the decomposition into projective indecomposable modules is explicitly described as a result.
Additionally, the corresponding primitive orthogonal idempotents are calculated for $kA_4$. $kA_4$ has one block, so $kA_4$  is itself the decomposition into a block.
To find simple $kA_5$-modules, we prove the number of simples is the number of $p$-conjugacy classes of $A_5$. $kA_5$ has four simples, so has four indecomposable modules. We find indecomposable $kA_5$ modules by induction on simple $kA_4$-modules. In the context, we consider $A_4$ as a subgroup of $A_5$. Induced $kA_5$-modules are obtained by considering the dimensions of homomorphism spaces between $kA_4$ simples and the restricted $kA_5$-simples and using Frobenius reciprocity. Like we did in chapter 5, we will find the projective covers and the injective envelops for the indecomposable modules and eventually find some projective indecomposable modules. For one projective indecomposable module, we need to consider homomorphism spaces between the module and another projective indecomposable module to find submodules and quotients of the module. By using projectivity on the induced projective indecomposable module from $A_4$, we can see how the induced module is structured. We can eventually decompose all the given group algebras into a direct sum of projective indecomposable modules. We obtain that $kA_5$ has two blocks by using the relation between the multiplicity of a simple in a projective indecomposable module and a block.

Through this dissertation, we not only investigates given group algebras, but we also gain insight into the general structure of group algebras. The dissertation is broadly based on \cite{C.W.Eaton} and \cite{Webb_2016}.

\chapter{Decomposition of A Regular Module}
\chaptermark{Decomposition of A Regular Module} 

A general assumption of any algebra $A$ or ring $R$ is that it is finitely generated. In non-commutative algebra, if an algebra $A$ is finitely generated, then it is Noetherian\index{Noetherian} and Artinian\index{Artinian}. Another general assumption in this context is that a group $G$ is finite, a field $k$ is algebraically closed (i.e., $\bar{k}=k$), and its characteristic is positive, let say, $p$.

\section{A Regular Module}

As the main object of this dissertation is the regular module, especially the group algebra, we begin by defining the basic concepts.
Let $R$ be a ring. A \textit{regular module}\index{regular module} ${}_RR$ is a left $R$-module acted by $R$ itself where the action is the multiplication in $R$. If the regular module ${}_RR$ seems clear in the context, it is simply denoted as $R$.
A \textit{free}\index{free} module is a module with a basis.
For a ring $R$ and an $R$-module $M$, the subset $E$ of $M$ is called a \textit{basis} for $M$ if it spans $M$ and is linearly independent. \index{group algebra}The \textit{group algebra} $RG$ of $G$ over $R$ is a free $R$-module with the elements of $G$ as an $R$-basis. The multiplication of $RG$ given on the basis elements is extended to arbitrary elements using bilinearity of the operation. An element of $RG$ is of the form $\sum_{g\in G} a_g g$ where $a_g\in R$. The multiplication is defined as $\big(\sum_{g\in G}a_g g\big)\cdot \left(\sum_{h\in G}b_h h\right)=\sum_{k\in G}\big(\sum_{gh=k}a_gb_h\big)k.$ The dimension of $RG$ is $|G|$. For example, the dimension of $kA_4$ and $kA_5$ are $12$ and $60$, respectively, as $|A_4|=12$ and $|A_5|=60$.

\subsection{A Projective Module}
The question is that what types of the direct summands would be when decomposing a regular module into a direct sum of submodules. The direct summands can have a property called projective. Let $R$ be a ring. An $R$-module $P$ is called a \textit{projective}\index{projective} $R$-module if it is a direct summand of a free $R$-module. The definition directly proves that a direct sum of projective modules and a direct summand of a projective module are projective. A projective indecomposable module is often abbreviated as PIM. A projective module has equivalent statements, one with the term split epimorphism.
\index{split epimorphism}
An $R$-module homomorphism $f:M \rightarrow N$ is called \textit{split epimorphism} if there exists an $R$-module homomorphism $g: N\rightarrow M$ such that $fg=1_N$, the identity map on $N$. If a homomorphism is a split epimorphism, then it is an epimorphism, by $x=f(g(x))\in \text{Im}(f)$ for all $x\in N$.

\begin{Proposition}
Let $0 \rightarrow L  \xrightarrow{\alpha}M \xrightarrow{\beta}N \rightarrow 0$ be a short exact sequence of modules over a ring $A$. Then the followings are equivalent.
\begin{itemize}
\item[(i)]
$\alpha$ is a split epimorphism.
\item[(ii)]
There is a commutative diagram
\begin{align*}
0\quad\quad&\rightarrow & L & \quad\quad\quad\xrightarrow{\alpha}& M\;\;\quad &\quad\xrightarrow{\beta} &N\quad\quad&\rightarrow& 0\\
  &		& \parallel &			& \downarrow \gamma\quad&		&\parallel\quad\quad&\quad	& \\
0\quad\quad &\rightarrow& L &\quad\quad\quad \xrightarrow{i_1} & L\oplus N \quad&\quad \xrightarrow{\pi_2}&N\quad\quad&\rightarrow&0
\end{align*}
where $i_1$ is an inclusion into the first summand and $\pi_2$ is a projection onto the second summand.
\end{itemize}
\end{Proposition}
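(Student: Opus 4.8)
The statement is the classical \emph{splitting lemma}, and the plan is to prove the two implications directly from the definitions, using nothing more than exactness of the two rows and a short diagram chase. I read condition (i) as the assertion that $\alpha$ is a split monomorphism, i.e., that there is a homomorphism $\rho\colon M\to L$ with $\rho\alpha=1_L$; this is the splitting condition compatible with the diagram in (ii). (The same argument incidentally shows this is equivalent to $\beta$ admitting a section $s\colon N\to M$ with $\beta s=1_N$.)

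For (i) $\Rightarrow$ (ii): given a retraction $\rho\colon M\to L$ of $\alpha$, I would define $\gamma\colon M\to L\oplus N$ by $\gamma(m)=(\rho(m),\beta(m))$. Then $\gamma\alpha(l)=(\rho\alpha(l),\beta\alpha(l))=(l,0)=i_1(l)$, where $\beta\alpha=0$ is exactness at $M$, so the left-hand square commutes; and $\pi_2\gamma(m)=\beta(m)$, so the right-hand square commutes. Together with the identity maps on $L$ and $N$ this is exactly the commutative diagram demanded by (ii).

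For (ii) $\Rightarrow$ (i): starting from the commutative diagram, the first step is to observe that $\gamma$ is an isomorphism. This is the short five lemma applied to the two rows (whose outer vertical maps are identities); since that lemma is not quoted earlier, I would just run the chase. Injectivity: if $\gamma(m)=0$ then $\beta(m)=\pi_2\gamma(m)=0$, so $m=\alpha(l)$ for some $l$ by exactness at $M$, and then $i_1(l)=\gamma\alpha(l)=\gamma(m)=0$ forces $l=0$, hence $m=0$. Surjectivity: given $(l,n)$, pick $m_0$ with $\beta(m_0)=n$; then $(l,n)-\gamma(m_0)$ has zero second coordinate, so it equals $i_1(l')=\gamma\alpha(l')$ for some $l'$, whence $(l,n)=\gamma(m_0+\alpha(l'))$. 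Once $\gamma$ is invertible, set $\rho:=\pi_1\gamma\colon M\to L$, where $\pi_1$ is the projection onto the first summand; then $\rho\alpha=\pi_1\gamma\alpha=\pi_1 i_1=1_L$, so $\alpha$ is split.

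The only step with genuine content is showing that $\gamma$ is an isomorphism in the second implication; everything else is bookkeeping with $i_1$, $\pi_1$, $\pi_2$ and the two exact sequences. Even that step reduces to the few lines of diagram chasing above (or a one-line appeal to the short five lemma), so I do not expect a real obstacle — the only care needed is in keeping the direction of the splitting straight and not conflating ``split monomorphism'' with ``split epimorphism''.
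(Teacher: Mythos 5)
Your proof is correct, and you have spotted a genuine misprint in the statement: as written, condition (i) reads ``$\alpha$ is a split epimorphism,'' which is nonsense because $\alpha\colon L\to M$ is the injection of the short exact sequence. You correct it to ``$\alpha$ is a split monomorphism'' (a retraction $\rho\colon M\to L$ with $\rho\alpha=1_L$), whereas the paper's own proof tacitly corrects it the other way, to ``$\beta$ is a split epimorphism'' (a section $s\colon N\to M$ with $\beta s=1_N$): its argument begins ``then $\beta$ is split by $\gamma^{-1}i_2$,'' and for the converse builds the map $\eta=(\alpha,\,\text{split of }\beta)\colon L\oplus N\to M$. Both readings are fine since, by the splitting lemma you are proving, they are equivalent to each other and to (ii). Beyond that, your approach differs from the paper's in one useful respect: where the paper delegates the invertibility of $\eta$ (equivalently $\gamma$) to external citations of the Five Lemma and Snake Lemma, you run the short diagram chase for injectivity and surjectivity of $\gamma$ directly, which makes the proof self-contained and, frankly, cleaner for a result this elementary. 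Your construction $\gamma(m)=(\rho(m),\beta(m))$ is also the natural inverse of the paper's $\eta(l,n)=\alpha(l)+s(n)$, so the two proofs are dual presentations of one and the same isomorphism; yours just goes $M\to L\oplus N$ and the paper's goes $L\oplus N\to M$.
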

\begin{proof}
If (ii) is satisfied, then $\beta$ is split by $\gamma^{-1}i_2$. To prove the other direction, let $\eta=(\alpha, \text{spit of }\beta)$. It suffices to show that $\eta$ is invertible. This is proved by the Five-Lemma in \cite[The Five-Lemma]{hatcher_algebraic_2001} and the Snake Lemma in \cite[Snake Lemma 1.3.2]{Weibel_1994}.
\end{proof}

For any short exact sequence $0 \rightarrow L  \xrightarrow{\alpha}M \xrightarrow{\beta}N \rightarrow 0$ with $\alpha$ being a split epimorphism, $M\cong L\oplus N$ by the isomorphism $\gamma$ in (ii).

\begin{Proposition}\label{projdef}\index{projective}
Let $A$ be a ring. The followings are equivalent:
\begin{enumerate}
\item
An $A$-module $P$ is a direct summand of a free module.
\item
Every epimorphism $V\rightarrow P$ is split.
\item For every pair of morphisms $\begin{smallmatrix} &&P \\ &&\\ && \;\downarrow\, \alpha\\ && \\ V&\rightarrow & W\\ &\beta: \text{ epi}& \end{smallmatrix}$ where $\beta$ is an epimorphism, there exists a morphism $\gamma: P \rightarrow V$ where the diagram $\begin{smallmatrix} &&P \\ &\gamma&\\ & \,\swarrow & \;\downarrow\, \alpha\\ & & \\ V&\rightarrow & W\\ &\beta: \text{ epi}& \end{smallmatrix}$ commutes.
\item For every short exact sequence of $A$-modules $0 \rightarrow V \rightarrow W \rightarrow X \rightarrow 0$, the corresponding sequence $0 \rightarrow \Hom_A(P,V) \rightarrow \Hom_A(P,W) \rightarrow \Hom_A(P,X) \rightarrow 0$ is exact.

\end{enumerate}
\end{Proposition}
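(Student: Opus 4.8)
The plan is to prove the equivalence of the four characterizations of projectivity by establishing a cycle of implications, say $(1)\Rightarrow(2)\Rightarrow(3)\Rightarrow(4)\Rightarrow(1)$, together with the auxiliary observation that free modules satisfy the lifting property $(3)$. First I would record the base case: a free module $F$ with basis $\{e_i\}_{i\in I}$ satisfies $(3)$, because given $\alpha\colon F\to W$ and an epimorphism $\beta\colon V\to W$, one chooses for each $i$ a preimage $v_i\in V$ with $\beta(v_i)=\alpha(e_i)$ and defines $\gamma\colon F\to V$ on the basis by $e_i\mapsto v_i$; this extends uniquely to a homomorphism and manifestly satisfies $\beta\gamma=\alpha$.

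For $(1)\Rightarrow(2)$: write $F\cong P\oplus Q$ for some free module $F$, with inclusion $\iota\colon P\to F$ and projection $\pi\colon F\to P$ so that $\pi\iota=1_P$. Given an epimorphism $f\colon V\to P$, the composite $f$ with the quotient data lets us lift: apply the lifting property of $F$ (via the split structure) to the map $\iota\colon P\to F\to$\,(target), or more directly, form $F\xrightarrow{\pi}P$ and lift this along $f$ using that $F$ is free and hence satisfies $(3)$; restricting the resulting lift to the summand $P$ via $\iota$ produces a section of $f$, so $f$ is split. For $(2)\Rightarrow(3)$: given $\alpha\colon P\to W$ and an epimorphism $\beta\colon V\to W$, form the pullback $E=\{(p,v)\in P\oplus V : \alpha(p)=\beta(v)\}$; the projection $E\to P$ is an epimorphism because $\beta$ is onto, hence splits by $(2)$, and composing the splitting with the projection $E\to V$ yields the desired $\gamma$ with $\beta\gamma=\alpha$. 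For $(3)\Rightarrow(4)$: left-exactness of $\Hom_A(P,-)$ is automatic for any module $P$, so the only content is surjectivity of $\Hom_A(P,W)\to\Hom_A(P,X)$, which is exactly the lifting property $(3)$ applied to the epimorphism $W\to X$. For $(4)\Rightarrow(1)$: choose a free module $F$ with an epimorphism $\pi\colon F\to P$ (every module is a quotient of a free module) and let $K=\Ker\pi$, giving the short exact sequence $0\to K\to F\xrightarrow{\pi} P\to 0$; by $(4)$ the map $\Hom_A(P,F)\to\Hom_A(P,P)$ is surjective, so $1_P$ lifts to a section $s\colon P\to F$ of $\pi$, and by the first Proposition in the excerpt (or directly) $F\cong K\oplus P$, exhibiting $P$ as a direct summand of a free module.

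I expect the main obstacle to be organizing the implications so that the lifting property of free modules is invoked cleanly — in particular, the step $(1)\Rightarrow(2)$ is slightly awkward because the natural way to get a section of $f\colon V\to P$ is to lift along $f$ a map out of the free module and then restrict to the summand $P$, and one must be careful that this restricted map really is a one-sided inverse of $f$. An alternative that avoids this friction is to insert $(1)\Rightarrow(3)$ directly: a direct summand of $F$ inherits the lifting property from $F$ by pre-composing with the inclusion $\iota\colon P\to F$ and post-composing the resulting lift with $\pi\colon F\to P$. Everything else is routine diagram-chasing: the pullback construction in $(2)\Rightarrow(3)$ and the splitting extraction in $(4)\Rightarrow(1)$ are standard, and the left-exactness half of $(4)$ requires no hypothesis on $P$ at all. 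I would present the cycle $(1)\Rightarrow(3)\Rightarrow(2)\Rightarrow(3)$ reorganized as $(1)\Rightarrow(3)\Rightarrow(4)\Rightarrow(1)$ with $(2)\Leftrightarrow(3)$ handled as a short separate equivalence, since that keeps each individual implication to a two- or three-line argument.
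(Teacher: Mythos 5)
The paper does not give a proof of this proposition --- it simply records ``The proof is omitted as it's standard'' --- so there is no argument of the paper's to compare against. Your cycle of implications is correct and is the standard one: free modules satisfy the lifting property $(3)$ by choosing preimages of basis elements; the pullback gives $(2)\Rightarrow(3)$; left-exactness of $\Hom_A(P,-)$ plus the lifting property gives $(4)$; and lifting $1_P$ through a free presentation gives $(4)\Rightarrow(1)$. Your own observation that $(1)\Rightarrow(2)$ is awkward is well taken, and the reorganisation you propose ($(1)\Rightarrow(3)\Rightarrow(4)\Rightarrow(1)$ with $(2)\Leftrightarrow(3)$ as a short side equivalence) is indeed how most textbook treatments, including Webb's, arrange the proof.

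One small slip in your alternative route $(1)\Rightarrow(3)$: to inherit the lifting property of $F$, you should first \emph{pre}-compose the given $\alpha\colon P\to W$ with the retraction $\pi\colon F\to P$ to obtain $\alpha\pi\colon F\to W$, lift that through $\beta$ to some $\gamma'\colon F\to V$, and then \emph{pre}-compose $\gamma'$ with the inclusion $\iota\colon P\to F$ to get $\gamma=\gamma'\iota$ with $\beta\gamma=\alpha\pi\iota=\alpha$. Your phrasing swaps the roles of $\iota$ and $\pi$ (and pre- versus post-composition): as written, neither ``pre-composing with $\iota$'' at the start nor ``post-composing the lift with $\pi$'' at the end typechecks. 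The underlying idea is clearly correct; just be careful which of the two structure maps you use at each stage.
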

\begin{proof}
The proof is omitted as it's standard.
\end{proof}

\index{restricted representation}
Let $H$ be a subgroup of $G$ and $W$ be a representation of $G$. The \textit{restricted representation}\index{restricted representation} to $H$, denoted by $(W)_H$, is a representation with its space $W$ and action restricted to $H$ where the elements of $H$ act the same way on $W$. Let $n=[G:H]$ and $x_1,\cdots, x_n$ be coset representatives of $G/H$. Then $G=\bigcup_{i=1}^n x_iH$. Let $U$ be an $kH$-module. Then the \textit{induced}\index{induced module} module $U^G$ is defined as $\bigoplus_{i=1}^n x_iU$. By comparing their dimensions,
\begin{equation}\label{inddim}
\dim U^G=[G:H]\dim U.
\end{equation}
The following lemma \cite[Lemma 8.1.2]{Webb_2016} shows induced or restricted projective modules are projective.
\begin{Lemma}\label{indrestrprojmodareproj}
Let $H$ be a subgroup of $G$.

\begin{enumerate}
\item
If $P$ is a projective $RG$-module, then $(P)_H$ is a projective $RH$-module.
\item
If $Q$ is a projective $RH$-module, then $Q^G$ is a projective $RG$-module. 
\end{enumerate}
\end{Lemma}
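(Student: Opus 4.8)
The plan is to prove the two parts separately, exploiting the concrete description of induction and restriction together with the characterisation of projectives in Proposition~\ref{projdef}, namely that a module is projective precisely when it is a direct summand of a free module.

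For part (1), I would first reduce to the case $P = RG$ itself. Indeed, since $P$ is projective, $P$ is a direct summand of a free $RG$-module $F$, say $F = P \oplus P'$, and $F$ is a direct sum of copies of $RG$. Restriction to $H$ is an additive functor, so $(F)_H = (P)_H \oplus (P')_H$ is a direct summand-decomposition, and $(F)_H$ is a direct sum of copies of $(RG)_H$. Hence it suffices to show $(RG)_H$ is a free $RH$-module, for then $(P)_H$ is a direct summand of a free $RH$-module and is therefore projective by Proposition~\ref{projdef}. The freeness of $(RG)_H$ is the key computation: if $x_1, \dots, x_n$ are left coset representatives so that $G = \bigsqcup_{i=1}^n x_i H$, then the set $\{x_1, \dots, x_n\}$ is an $RH$-basis of $RG$ acting on the right-hand factor — that is, $RG = \bigoplus_{i=1}^n x_i (RH)$ as a left $RH$-module — so $(RG)_H$ is free of rank $n = [G:H]$.

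For part (2), I would again reduce to the free case. Since $Q$ is a projective $RH$-module, write $E = Q \oplus Q'$ with $E$ a free $RH$-module, a direct sum of copies of $RH$. Induction $(-)^G = RG \otimes_{RH} (-)$, or equivalently the concrete $\bigoplus_{i=1}^n x_i(-)$ description, is an additive functor, hence $E^G = Q^G \oplus (Q')^G$. So it suffices to check that the induction of a free $RH$-module is a free $RG$-module; by additivity this comes down to showing $(RH)^G \cong RG$ as $RG$-modules. Using the formula $\dim U^G = [G:H]\dim U$ from~\eqref{inddim} we get $\dim (RH)^G = [G:H]\cdot |H| = |G| = \dim RG$, and one exhibits the isomorphism directly: $(RH)^G = \bigoplus_{i=1}^n x_i(RH)$ with the $G$-action, and the map sending $x_i h \mapsto x_i h \in RG$ is a bijective $RG$-module homomorphism. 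Then $Q^G$ is a direct summand of the free module $E^G$, hence projective by Proposition~\ref{projdef}.

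The main obstacle, such as it is, is purely bookkeeping: one must be careful that the functors $(-)_H$ and $(-)^G$ genuinely commute with finite direct sums (clear from their definitions) and that the $RH$-module structure on $(RG)_H$ in part (1), respectively the $RG$-module structure on $(RH)^G$ in part (2), is the one claimed — in particular that the $x_i$ really do form a free basis and that the action of $G$ permutes the summands $x_i(RH)$ correctly. Once the single identity $(RG)_H \cong (RH)^{\,[G:H]}$ in part (1) and $(RH)^G \cong RG$ in part (2) are established, both statements follow formally from the direct-summand characterisation of projectivity.
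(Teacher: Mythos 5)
Your strategy is the same as the paper's: reduce to showing $(RG)_H$ is a free $RH$-module and $(RH)^G \cong RG$, then use additivity of restriction/induction together with the direct-summand characterisation of projectivity. Part (2) is correct, and your direct verification that $x_ih \mapsto x_ih$ is an $RG$-isomorphism $(RH)^G \to RG$ is a valid alternative to the paper's transitivity-of-induction argument.

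However, part (1) contains a genuine error in the choice of cosets. You take \emph{left} coset representatives $G = \bigsqcup_i x_iH$ and claim $RG = \bigoplus_i x_i(RH)$ as a \emph{left} $RH$-module. That decomposition is a right $RH$-module decomposition: for $h \in H$, $(x_ih')\cdot h = x_i(h'h)$ stays in $x_iRH$, but left multiplication gives $h\cdot(x_ih') = (hx_i)h'$, and $hx_i$ generally lies in a different left coset $x_jH$ with $j \neq i$, so the summands $x_iRH$ are not stable under the left $H$-action. Since $(P)_H$ is a left $RH$-module (restriction of a left $RG$-module), the $RH$-module structure you exhibit on $RG$ is not $(RG)_H$. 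The fix is exactly what the paper does: use \emph{right} coset representatives $G = \bigsqcup_i Hg_i$, giving $(RG)_H = \bigoplus_{i} (RH)g_i \cong (RH)^{[G:H]}$ as left $RH$-modules, since $h\cdot(h'g_i) = (hh')g_i \in RHg_i$. You flagged this as a bookkeeping point to check; it is in fact where the claim as you wrote it fails, so the check must actually be carried out with the correct side of coset.
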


\begin{proof}
\begin{enumerate}
\item
From $(RG)_H \cong \oplus_{g\in H\backslash G} RHg \cong (RH)^{[G:H]}$, we derive that the restricted $RH$-module is free. $(P)_H$ is a direct summand of ${RG}^n$ on restriction to $H$, so it is a direct summand of ${RH^{[G:H]}}^n$.

\item
$(RH)\ind^G_H \cong (R\ind^H_1)\ind^G_H \cong R\ind^G_1 \cong RG$ implies that the direct summands of ${RH}^n$ are induced to the direct summands of ${RG}^n$.
\end{enumerate}
\end{proof}

\subsection{The Radical of A Regular Module}
\index{socle}
Let $A$ be a $k$-algebra. A \textit{socle}\index{socle} of $A$-module $U$, denoted by $\soc U$, is defined as the largest semisimple submodule of $U$. If $U$ is finitely generated, $\soc U$ is the sum of all simple submodules of $U$. Radical is the dual concept of socle. Radical is for when quotients of a given module. \index{radical}
The \textit{radical} of an $A$-module $U$, denoted by $\rad U$, is defined as the intersection of all maximal submodules of $U$. i.e, $\rad U= \cap\;\{\,M\; |\; M \text{ is a maximal submodule of } U\}$. Given a regular module ${}_AA$, we simply write $\rad {}_AA$ as $\rad A$. The following proposition \cite[Proposition 6.3.2, 6.3.4]{Webb_2016} gives properties of the radical and socle of a module.

\begin{Proposition}  \label{JR}\label{A/RadA semisimple}
\begin{itemize}
\item[(i)]
$\rad A$ annihilates semisimple modules. That is, $\rad A= \{a \in A \;|\; a\cdot S=0, \text{ for all $S$ simple $A$-modules}\}.$ The radical is called \textit{Jacobson radical}. 
\item[(ii)]
Let $A$ be a finite-dimensional algebra over a field.
Then $\rad A$ is the smallest left ideal of $A$ such that $A/\rad A$ is semisimple.
\item[(iii)]
For an $A$-module $U$ where $A$ is finitely generated, $\rad U=\rad A\cdot U$.
\item[(iv)]
For an $A$-module $U$ where $A$ is finitely generated, $\soc U = \{u\in U \,|\, \rad A\cdot u=0\}$.
\end{itemize}
\end{Proposition}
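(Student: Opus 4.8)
The plan is to prove (i) straight from the definition of $\rad A$ as the intersection of all maximal left ideals, to obtain (ii) from (i) together with the finite-dimensionality hypothesis, and then to deduce (iii) and (iv) as formal consequences of (i) and (ii). For (i) I would argue by double inclusion. Given $a \in \rad A$ and a simple $A$-module $S$, pick any $0 \neq s \in S$; then $As$ is a nonzero submodule of $S$, hence $As = S$, so the map $A \to S$, $x \mapsto xs$, is a surjection whose kernel, the annihilator $M_s$ of $s$, is a maximal left ideal with $A/M_s \cong S$. Since $\rad A \subseteq M_s$ we get $as = 0$, and as $s$ was arbitrary, $aS = 0$. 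Conversely, if $a$ annihilates every simple module, then for each maximal left ideal $M$ the module $A/M$ is simple, so $a \cdot (1 + M) = 0$, i.e. $a \in M$; intersecting over all $M$ gives $a \in \rad A$. In particular this description shows $\rad A$ is a two-sided ideal, being the common annihilator of a family of modules.

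For (ii) there are two things to check: that $A/\rad A$ is semisimple, and that every left ideal $I$ with $A/I$ semisimple contains $\rad A$. The second is immediate from (i): if $A/I$ is semisimple then $\rad A$ annihilates it, so $(\rad A)A \subseteq I$ and hence $\rad A \subseteq I$. The first is where I expect the real work, and where finite-dimensionality enters: among all intersections $M_1 \cap \cdots \cap M_n$ of finitely many maximal left ideals, choose one of least dimension; minimality forces $M_1 \cap \cdots \cap M_n \subseteq M$ for every maximal left ideal $M$ (otherwise intersecting with such an $M$ would strictly lower the dimension), so $\rad A = M_1 \cap \cdots \cap M_n$. The diagonal homomorphism $A \to \bigoplus_{i=1}^{n} A/M_i$ then has kernel $\rad A$, exhibiting $A/\rad A$ as a submodule of a semisimple module, hence semisimple.

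Finally (iii) and (iv) should be purely formal. For (iii): each maximal submodule $M$ of $U$ has $U/M$ simple, so $\rad A$ annihilates $U/M$ by (i) and thus $(\rad A)U \subseteq M$; intersecting over all maximal $M$ gives $(\rad A)U \subseteq \rad U$. For the reverse inclusion, $U/(\rad A)U$ is a module over the semisimple ring $A/\rad A$, hence semisimple, so its radical vanishes; since maximal submodules of $U/(\rad A)U$ lift to maximal submodules of $U$ containing $(\rad A)U$, one gets $(\rad U + (\rad A)U)/(\rad A)U \subseteq \rad(U/(\rad A)U) = 0$, forcing $\rad U \subseteq (\rad A)U$. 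For (iv): put $N = \{u \in U \mid (\rad A)u = 0\}$, the largest submodule of $U$ killed by $\rad A$, equivalently the largest submodule that is an $A/\rad A$-module; since $A/\rad A$ is semisimple, every module over it is semisimple, so $N$ is a semisimple submodule of $U$ and $N \subseteq \soc U$. Conversely $\soc U$ is semisimple, so $\rad A$ annihilates it by (i), giving $\soc U \subseteq N$. The only genuinely hard step is the semisimplicity of $A/\rad A$ in (ii); everything after that is bookkeeping with the definitions.
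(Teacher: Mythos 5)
Your argument is correct and proceeds along the same lines as the paper's for parts (i), (iii), and (iv): part (i) by double inclusion via the annihilator $M_s$ of a nonzero element of a simple module, and (iii), (iv) as formal consequences of (i) together with the semisimplicity of $A/\rad A$ (using that a module over a semisimple ring is semisimple, and that images of radicals under surjections lie in radicals). Part (ii) is where you genuinely improve on the paper. The paper's proof there merely asserts that ``$\rad A$ is finitely generated, [hence] $\rad A = M_1 \cap \cdots \cap M_n$'' and then appeals to a dangling (blank) reference for semisimplicity; but finite generation of $\rad A$ does not by itself produce a finite intersection of maximal left ideals --- in $\mathbb{Z}$, for instance, $\rad \mathbb{Z} = 0$ is finitely generated yet is not a finite intersection of maximal ideals. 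Your version supplies the missing argument correctly: finite-dimensionality lets you choose a finite intersection $M_1 \cap \cdots \cap M_n$ of \emph{least dimension}, minimality forces it to lie in every maximal left ideal and hence to equal $\rad A$, and then the diagonal map $A/\rad A \hookrightarrow \bigoplus_i A/M_i$ exhibits $A/\rad A$ as a submodule of a semisimple module. You also make explicit the minimality of $\rad A$ among left ideals with semisimple quotient, which the paper leaves unsaid. So: same global strategy, but your (ii) is the complete version of what the paper only gestures at.
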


\begin{proof}
\begin{itemize}
\item[(i)]
Let $S$ be a simple $A$-module. For any nonzero $s\in S$, the module homomorphism $A\rightarrow S$ defined by $a\mapsto as$ is surjective. The kernel $M_s$ is a maximal left ideal, so $\rad A \subseteq M_s$. For any $a\in \rad A$, $as=0$ for all simples $S$ and $s\in S$. Conversely, let $a\in A$ such that $a\cdot S=0$ for all simple modules $S$. Fix a maximal left ideal $M$. Then $A/M$ is a simple module so $a\cdot (A/M)=0$. That is, $a\in M$ for arbitrary maximal $M$. This shows $a\in\rad A$.

\item[(ii)]
The submodule $\rad A$ of $A$ is finitely generated. This shows $\rad A= M_1 \cap\cdots\cap M_n$. The quotient $A/\rad A$ is semisimple $A$-module by .

\item[(iii)]
Let $V$ be a maximal submodule of $U$. Then $U/V$ is simple. By (i), $\rad A\cdot(U/V)=0$, so $\rad A\cdot U \subseteq V$. $\rad A\cdot U$ is independent of choice of maximal $V$, so $\rad A\cdot U \subseteq \cap\{M\,|\,M: \text{maximal submodules of }U\}=\rad U$. Conversely, $\rad(U/(\rad A\cdot U))$ is zero as $U/\rad A\cdot U$ is semisimple. Then $\rad U \subseteq \rad A\cdot U$.

\item[(iv)]
The set $\{u\in U \,|\, \rad A\cdot u=0\}$ is the largest submodule of $U$ annihilated by $\rad A$, so it is $(A/\rad A)$-module. As $A/\rad A$ is semisimple, it is semisimple and is a submodule of $\soc U$. Conversely, every semisimple submodule of $U$ is annihilated by $\rad A$ as $\rad A$ is the Jacobson radical. The largest semisimple submodule is annihilated by $\rad A$, so $\soc U$ is included in the set.
\end{itemize}\end{proof}

For each $A$-module $U$, define $\rad^n(U)=\rad(\rad^{n-1}(U))$ iteratively. By Proposition \ref{A/RadA semisimple} (iii), the submodules $\rad^n U$ of $U$ form a chain, $\cdots \subseteq \rad^2 U \subseteq \rad U \subseteq U$. The chain is a composition series (defined in the Appendix \ref{composseries}). The composition series is called the \textit{radical series}\index{radical series} of $U$. The quotients $\rad^{n-1}(U)/\rad^{n}(U)$ are called the \textit{radical layers}\index{radical layer}. The quotient $A /\rad A$ is called the \textit{head}\index{head} of $A$. Nakayama's Lemma \cite[Theorem 7.3.1]{Webb_2016} tells us that the quotient map from a module to its head is essential.

\begin{Theorem}[Nakayama's Lemma]\label{nakayamalemma}\index{Nakayama's Lemma}
Let $U$ be any Noetherian $A$-module. Then the quotient map $U \rightarrow U/\rad U$ is essential.

\end{Theorem}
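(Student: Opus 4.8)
The plan is to unwind the definition of ``essential'' and reduce the statement to a fact about maximal submodules. Recall that an epimorphism $f\colon M\to N$ is essential exactly when its kernel is superfluous (small) in $M$; that is, whenever a submodule $L\subseteq M$ satisfies $L+\Ker f=M$ one already has $L=M$. Applying this to the quotient map $\pi\colon U\to U/\rad U$, whose kernel is $\rad U$ by construction, the claim becomes the implication: for every submodule $L\subseteq U$ with $L+\rad U=U$, one has $L=U$. So the entire proof amounts to establishing this implication.

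First I would argue by contradiction, supposing $L+\rad U=U$ but $L\subsetneq U$. The key step is to produce a maximal submodule of $U$ containing $L$. This is where the Noetherian hypothesis is used: the family $\mathcal F=\{\,N\mid L\subseteq N\subsetneq U\,\}$ is nonempty (it contains $L$), so by the ascending chain condition it has a maximal element $M$; and a one-line argument shows $M$ is actually a maximal submodule of $U$, since any submodule strictly between $M$ and $U$ would still contain $L$ and hence lie in $\mathcal F$, contradicting the maximality of $M$ in $\mathcal F$.

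Then I would invoke the definition of the radical, $\rad U=\bigcap\{\,M'\mid M' \text{ maximal submodule of }U\,\}$, which gives $\rad U\subseteq M$. Combining the inclusions, $U=L+\rad U\subseteq M+M=M\subsetneq U$, a contradiction. Therefore $L=U$, and the quotient map $\pi$ is essential, as required.

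The argument is otherwise routine; the only point that deserves care is the existence of a maximal submodule above $L$, which is precisely where ``Noetherian'' (equivalently here, finitely generated) enters. Without some finiteness or choice input a nonzero module need not possess any maximal submodule at all, so this is the genuine content of the statement rather than a formal manipulation of definitions.
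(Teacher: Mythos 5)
Your proof is correct and follows essentially the same route as the paper: given a proper submodule $L$ with $L+\rad U=U$, find a maximal submodule $M\supseteq L$, note $\rad U\subseteq M$, and conclude $U=L+\rad U\subseteq M\subsetneq U$, a contradiction. The one place where you improve on the paper's exposition is in making explicit that the Noetherian hypothesis is what guarantees the existence of the maximal submodule $M$ (via the ascending chain condition applied to the family of proper submodules containing $L$); the paper simply asserts its existence without comment, and your final contradiction $U\subseteq M\subsetneq U$ is also more direct than the paper's detour through the composite map $V\hookrightarrow U\to U/\rad U$.
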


\begin{proof}
Equivalently, we want to show that if $V$ is a submodule of $U$ such that $V+\rad U=U$, then $V=U$. For contradiction, assume that $V \neq U$. Then $V \subsetneq U$ so there exists a maximal submodule $M$ of $U$ such that $V \subseteq M \subsetneq U$. $\rad U \subseteq M$ by the definition of radical, so $V+\rad U \subseteq M$. Then the composite map $V \hookrightarrow U \rightarrow U/\rad U$ where $U \rightarrow U/\rad U$ is the canonical quotient map has the image that is contained in $M/\rad U$. This leads to a contradiction that $V+\rad U \subsetneq U$, as $M/\rad U \neq U/\rad U$ by $(U/\rad U)/(M/\rad U) \cong U/M \neq 0$.
\end{proof}

\subsection{Projective Cover}

Projective cover is introduced for regular module decomposition. It is an essential epimorphism with certain condition, but it is also considered as a minimal module by its uniqueness.\index{essential epimorphism} An \textit{essential epimorphism} is an epimorphism of modules $f:U \rightarrow V$ such that no proper submodule of $U$ is mapped surjectively onto $V$ by $f$. That is, $f$ is called an essential epimorphism if $g: W \rightarrow V$ is a morphism such that $fg$ is an epimorphism, then $g$ is epimorphism.

\begin{Theorem}\label{ggfessepithenfis}
Let $f: U \rightarrow V$ and $g: V \rightarrow W$ be two module homomorphisms. If $g$ and $gf$ are essential epimorphisms, then so is $f$.
\end{Theorem}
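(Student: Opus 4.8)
The plan is to check, in order, the two conditions that make $f$ an essential epimorphism: that $f$ is an epimorphism, and that no proper submodule of $U$ is mapped onto $V$ by $f$. I will use the submodule form of the definition, namely that an epimorphism $e\colon X\to Y$ is essential exactly when $e(X')=Y$ implies $X'=X$ for every submodule $X'\subseteq X$; this is equivalent to the test-morphism formulation stated above.

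First I would show that $f$ is onto. Consider the submodule $\im f\subseteq V$. Since $gf$ is an epimorphism, $g(\im f)=\im(gf)=W$, so $g$ maps the submodule $\im f$ of $V$ onto $W$. As $g$ is an essential epimorphism, this forces $\im f=V$, i.e. $f$ is surjective. Note that this is the one place where essentiality of $g$ itself (not merely of $gf$) is used.

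Next I would establish essentiality of $f$. Let $U'\subseteq U$ be a submodule with $f(U')=V$. Then, since $g$ is onto, $(gf)(U')=g\big(f(U')\big)=g(V)=W$, so the essential epimorphism $gf$ carries the submodule $U'$ of $U$ onto $W$; hence $U'=U$. Therefore $f$ is essential. Equivalently, in test-morphism language: if $h\colon X\to U$ is a morphism with $fh$ an epimorphism, then $(gf)h=g(fh)$ is a composite of epimorphisms, hence an epimorphism, and essentiality of $gf$ then gives that $h$ is an epimorphism.

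The whole argument is a short diagram chase, so I do not expect any serious obstacle; the only point requiring care is not to skip the surjectivity step, since the definition of essential epimorphism already builds in that $f$ be an epimorphism, and that surjectivity genuinely needs the hypothesis on $g$ rather than following formally from the rest.
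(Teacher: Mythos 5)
Your proof is correct and takes essentially the same approach as the paper: surjectivity of $f$ follows from essentiality of $g$ applied to $\im f$, and essentiality of $f$ follows from essentiality of $gf$ applied to a submodule $U'$ with $f(U')=V$. The only superficial difference is that you argue directly while the paper phrases both steps as contrapositives.
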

\begin{proof}
Let $g$ and $gf$ be essential epimorphisms. If $f$ is not an epimorphism, then $f(U)$ is a proper submodule of $V$. Then $gf(U)$ would be a proper submodule of $W$, as $g$ is essential. This leads to a contradiction as $gf$ is an epimorphism, so $gf(U)=W$. This shows that $f$ is an epimorphism. If $U_0$ is a proper submodule of $U$, then $gf(U_0)$ is a proper submodule of $W$ as $gf$ is essential. $f(U_0)$ is also a proper submodule of $V$ as $g$ is epimorphism, so $f$ is essential.
\end{proof}

\index{projective cover}
The morphism $f: P_U \rightarrow U$ is called the \textit{projective cover} of $U$ if $P_U$ is a projective module and $f$ is an essential epimorphism. Equivalently, the projective cover of $U$ is defined as the projective module $P_U$ if $f: P_U\rightarrow U$ is an essential epimorphism. For a finitely generated algebra, every module has a projective cover by the following theorem \cite[Theorem 7.3.10]{Webb_2016}. After the theorem, the following theorem \ref{uniqueness projective cover} \cite[Proposition 7.3.3]{Webb_2016} shows that if it exists, the projective cover is unique.

\begin{Theorem}[Existence of Projective Cover]\label{existprojcover}
Let $A$ be a finite-dimensional algebra over a field $k$ and $U$ be an $A$-module. Then $U$ has a projective cover.
\end{Theorem}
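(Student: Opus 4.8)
The plan is to construct a projective cover of $U$ by building the projective cover of the head $U/\rad U$ first and then lifting through the essential quotient map. Since $A$ is finite-dimensional over $k$, the module $U$ is Noetherian and Artinian, so $U/\rad U$ is a finite-dimensional semisimple module; write $U/\rad U \cong \bigoplus_{i} S_i^{m_i}$ as a finite direct sum of simple modules. By Proposition~\ref{projdef}, a decomposition of $A/\rad A$ into simple summands lifts (via idempotents) to a decomposition $A \cong \bigoplus_j P_j$ of the regular module into projective summands, where each $P_j/\rad P_j$ is simple; these $P_j$ are the PIMs, and each simple $S$ occurs as $P_j/\rad P_j$ for some $P_j$. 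Setting $P := \bigoplus_i P(S_i)^{m_i}$, where $P(S_i)$ is the PIM with head $S_i$, gives a projective module with $P/\rad P \cong \bigoplus_i S_i^{m_i} \cong U/\rad U$.

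Next I would produce the map $P \to U$. Compose the quotient map $P \to P/\rad P$ with an isomorphism $P/\rad P \xrightarrow{\sim} U/\rad U$ to get an epimorphism $\varphi: P \to U/\rad U$. Using projectivity of $P$ (Proposition~\ref{projdef}(3)) against the essential epimorphism $\pi: U \to U/\rad U$ (which is essential by Nakayama's Lemma, Theorem~\ref{nakayamalemma}), lift $\varphi$ through $\pi$ to obtain $f: P \to U$ with $\pi f = \varphi$. Since $\varphi$ is surjective and $\pi$ is essential, $f$ is surjective: indeed $f(P) + \rad U = U$ because $\pi(f(P)) = \varphi(P) = U/\rad U$, so by Nakayama $f(P) = U$.

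It remains to check that $f: P \to U$ is \emph{essential}. The cleanest route is to show $P \to P/\rad P$ is itself essential — this is Nakayama's Lemma applied to $P$ — and then invoke Theorem~\ref{ggfessepithenfis} in the appropriate form: I would argue that $f$ composed with $\pi: U \to U/\rad U$ equals (up to the isomorphism) the essential epimorphism $P \to P/\rad P$, and that $\pi$ is essential, so by the two-out-of-three property for essential epimorphisms the middle map $f$ is essential. Concretely: $\pi f = \varphi$ where $\varphi$ is essential (being the composite of the essential map $P\to P/\rad P$ with an isomorphism), and $\pi$ is essential; Theorem~\ref{ggfessepithenfis} then yields that $f$ is essential. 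Hence $f: P \to U$ is a projective cover.

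The main obstacle is the lifting and essentiality bookkeeping rather than any deep fact: one must be careful that the lift $f$ really is surjective (this is exactly where Nakayama is used, not projectivity alone) and that Theorem~\ref{ggfessepithenfis} applies in the direction needed — namely deducing essentiality of the first map $f$ from essentiality of $g = \pi$ and of $gf = \varphi$. A secondary point requiring care is the very existence of the PIMs $P(S_i)$ with prescribed simple heads: this rests on lifting a primitive idempotent decomposition of $1 \in A/\rad A$ to $A$, which uses that $\rad A$ is nilpotent (finite-dimensionality) so that idempotents lift modulo the radical. Everything else — finiteness of the head's semisimple decomposition, the isomorphism $P/\rad P \cong U/\rad U$ — is routine.
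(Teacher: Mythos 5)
Your proof follows the same route as the paper: reduce to the semisimple head $U/\rad U$, take its projective cover $P$ built from PIMs of its simple summands, lift the map $P \to U/\rad U$ through the essential epimorphism $U \to U/\rad U$ (Nakayama's Lemma) using projectivity, and deduce essentiality of the lift from Theorem~\ref{ggfessepithenfis}. You are in fact slightly more careful than the paper at two points: you justify surjectivity of the lift separately, and you flag that the existence of the PIMs $P(S_i)$ rests on lifting idempotents modulo the nilpotent radical --- a fact the paper's proof silently borrows from a theorem it states later.
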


\begin{proof}
By Nakayama's Lemma, $U$ is a projective cover of $U/\rad U$. As $U/\rad U$ is semisimple, $U/\rad U=S_1\oplus\cdots\oplus S_n$ for some simples $S_1,\cdots, S_n$. Then $P_{S_1}\oplus\cdots\oplus P_{S_n} \rightarrow U/\rad U$ is the projective cover of $U/\rad U$. By projectivity, there exists a homomorphism $f$ such that the diagram
 $$\begin{matrix} && P_{S_1}\oplus\cdots\oplus P_{S_n}  \\ & &\\ & f \,\swarrow & \;\downarrow\, h\\ & & \\ U &\longrightarrow & U/\rad U \\ & g & \end{matrix}$$
 commutes. By \ref{ggfessepithenfis}, $f$ is also an essential epimorphism. This proves that $f$ is a projective cover of $U$.
\end{proof}

\begin{Theorem}[Uniqueness of Projective Cover]\label{uniqueness projective cover}
\begin{enumerate}

\item
Suppose $f: P\rightarrow U$ is a projective cover of an $A$-module $U$ and $g:Q\rightarrow U$ is an epimorphism where $Q$ is a projective module. Then $Q$ has a decomposition $Q_1\oplus Q_2$ so that $g$ has components $g=(g_1,0)$ with respect to this direct sum decomposition and $g_1: Q_1 \rightarrow U$ appears in a commutative triangle $\begin{smallmatrix} &&Q_1 \\ &\gamma&\\ & \,\swarrow & \;\downarrow\, g_1\\ & & \\ P&\rightarrow & U\\ &f: \text{ epi}& \end{smallmatrix}$ where $\gamma$ is an isomorphism.

\item
If any exist, the projective covers of a module $U$ are all isomorphic, by isomorphisms that commute with essential epimorphisms.

In other words, a projective cover is unique if it exists.

\end{enumerate}
\end{Theorem}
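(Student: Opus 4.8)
The plan is to prove part (1) directly and then obtain part (2) as an easy consequence. For (1), begin with the projective cover $f\colon P \to U$ and the given epimorphism $g\colon Q \to U$ with $Q$ projective. Since $Q$ is projective and $f$ is an epimorphism, Proposition~\ref{projdef}(3), applied with $P$ (there) taken to be $Q$, with $\alpha = g$, and with $\beta = f$, yields a homomorphism $\gamma\colon Q \to P$ satisfying $f\gamma = g$. Because $g$ is an epimorphism, $f\gamma$ is an epimorphism; and since $f$ is an \emph{essential} epimorphism, the defining property of essential epimorphisms forces $\gamma$ itself to be an epimorphism. This is the one place where the essential hypothesis does real work.

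Next I would split $\gamma$. As $\gamma\colon Q \to P$ is an epimorphism onto the projective module $P$, Proposition~\ref{projdef}(2) gives that $\gamma$ is a split epimorphism, so there is $\delta\colon P \to Q$ with $\gamma\delta = 1_P$. By the remark following the split-exact-sequence proposition, the sequence $0 \to \Ker\gamma \to Q \xrightarrow{\gamma} P \to 0$ splits, so $Q = Q_1 \oplus Q_2$ with $Q_2 = \Ker\gamma$ and $Q_1 = \delta(P)$, and the restriction $\gamma_1 := \gamma|_{Q_1}\colon Q_1 \to P$ is an isomorphism (inverse $\delta$). Writing $g$ in components for this decomposition: on $Q_2 = \Ker\gamma$ we get $g|_{Q_2} = f\gamma|_{Q_2} = 0$, and on $Q_1$ we get $g|_{Q_1} = f\gamma_1 =: g_1$. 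Hence $g = (g_1, 0)$, and the triangle with apex $Q_1$, arrow $\gamma_1\colon Q_1 \to P$, arrow $g_1\colon Q_1 \to U$, and base $f\colon P \to U$ commutes, with $\gamma_1$ an isomorphism. This establishes (1).

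For (2), suppose $f\colon P\to U$ and $g\colon Q\to U$ are both projective covers. Apply (1) to obtain $Q = Q_1\oplus Q_2$, $g = (g_1,0)$, and an isomorphism $\gamma_1\colon Q_1\to P$ with $f\gamma_1 = g_1$. Since $\gamma_1$ is an isomorphism and $f$ is surjective, $g(Q_1) = g_1(Q_1) = f(P) = U$, so the submodule $Q_1 \subseteq Q$ is mapped onto $U$ by $g$. But $g$ is essential, so no proper submodule of $Q$ maps onto $U$; therefore $Q_1 = Q$, i.e. $Q_2 = 0$. Thus $\gamma_1 = \gamma\colon Q \to P$ is an isomorphism with $f\gamma = g$, which is exactly an isomorphism commuting with the essential epimorphisms; uniqueness of the projective cover up to such isomorphism follows.

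I expect the main obstacle to be conceptual rather than computational: seeing that the lift $\gamma$ is forced to be an epimorphism (the only use of essentiality in part (1)) and that an epimorphism onto a projective module splits, after which everything reduces to bookkeeping with the resulting direct sum. Care is needed only in invoking the correct direction of Proposition~\ref{projdef} and in tracking which restricted maps are isomorphisms when expressing $g$ as $(g_1,0)$.
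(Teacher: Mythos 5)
Your proof is correct and takes essentially the same approach as the paper: lift $g$ through the epimorphism $f$ to obtain $\gamma\colon Q \to P$, use essentiality of $f$ to conclude $\gamma$ is an epimorphism, split $\gamma$ using projectivity of $P$, read off $g=(g_1,0)$ from $Q=Q_1\oplus\Ker\gamma$, and deduce part (2) from essentiality of $g$. A small streamlining over the paper's write-up is that you pass directly from ``$f\gamma=g$ is epi'' to ``$\gamma$ is epi'' via the definition of essential epimorphism, avoiding the paper's unnecessary second lift $\alpha\colon P\to Q$ and the detour through $f\beta\alpha=f$; you also correctly invoke essentiality of $g$ (not $f$, as the paper's text misstates) when concluding $Q_1=Q$ in part (2).
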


\begin{proof}

\begin{enumerate}

\item
P and Q are both projective, so let $\alpha: P \rightarrow Q$ and $\beta: Q \rightarrow P$ be morphisms lifted from $f$ and $g$, respectively. $\begin{smallmatrix} &&Q \\ &&\\ \alpha& \, \nearrow \swarrow \beta & \;\downarrow\, g\\ & & \\ P&\rightarrow & U\\ &f: \,\text{epi}& \end{smallmatrix}$. Then $f\beta\alpha=g\alpha=f$ is an essential epimorphism. This implies $\beta\alpha$ is an epimorphism, so $\beta$ is an epimorphism. The epimorphism $\beta$ splits as $P$ is projective. $Q$ has a decomposition $Q_1\oplus Q_2$ where $Q_2=\Ker \beta$ and $\beta$ maps $Q_1$ isomorphically to $P$. Then $g=(f\beta\vert_{Q_1}, 0)$ with an isomorphism $\beta\vert_{Q_1}$, as desired.

\item
Suppose $f: P\rightarrow U$ and $g: Q\rightarrow U$ are both projective covers. Then $Q_1$ is a submodule of $Q$ that is mapped onto $U$ and $f$ is essential, so $Q=Q_1$. The isomorphism $\gamma$ in the commutative triangle proves $P\cong Q$.
\end{enumerate}
\end{proof}

\section{Decompositions of A Regular Module}

\index{indecomposable}
A left $R$-module $U$ is said to be \textit{indecomposable}\index{indecomposable} if $U$ cannot be expressed as a direct sum of two non-trivial submodules of $U$. Explicitly, if $U \cong V \oplus W$, then $V=0$ or $W=0$.
\label{Simpleindecomp}
By the definition, simple modules are indecomposable.
For a finitely generated algebra $A$, the regular module ${}_AA$ can be decomposed as a direct sum of indecomposable modules $A_i$. i.e, ${}_AA=A_1\oplus \cdots \oplus A_n.$ The question is that what type of indecomposable submodules $A_i$ would be so that the decomposition of ${}_AA$ is unique up to isomorphism. \index{semisimple}A module $U$ is called \textit{semisimple} if $U$ can be decomposed as a direct sum of simple submodules.

\subparagraph{Primitive Idempotents} The concept of primitive idempotent is introduced for decomposing a regular module.
\index{idempotent}\index{orthogonal}\index{primitive}\index{lift}Let $A$ be a finite-dimensional $k$-algebra.
An element $e$ in $A$ is called \textit{idempotent} if $e^2=e$.
Any pair of elements $e$ and $f$ in $A$ is said to be \textit{orthogonal} if $ef=0=fe$.
We say $e$ is \textit{primitive} if it cannot be written as a sum of orthogonal idempotents. Let $A$ be a ring and $I$ be an ideal of $A$. For an idempotent $f$ of $A$, we say $f$ \textit{lifts} $e$ if $e=f+I$ is an idempotent of $A/I$. The Lemma \ref{eidempfidemp} \cite[Theorem 7.3.5]{Webb_2016} shows that any lift $e+I$ is primitive if the $e$ is primitive.

\begin{Lemma}\label{eidempfidemp}
Let $I$ be a nilpotent ideal of a ring $A$. If $e$ is an idempotent in $A/I$, then there exists an idempotent $f$ in $A$ with $e=f+I$.
Moreover, if $e$ is primitive, so is any lift $f$.
\end{Lemma}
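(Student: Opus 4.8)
The plan is to prove the two assertions separately. For the existence of a lift, I would argue by successive approximation along the descending chain $I \supseteq I^2 \supseteq \cdots$, which terminates since $I$ is nilpotent, say $I^m = 0$. Pick any preimage $a \in A$ of the idempotent $e \in A/I$, so that $a^2 - a \in I$. The idea is to correct $a$ inside its coset mod $I$ so as to kill the defect $a^2-a$ one power of $I$ at a time. Concretely, if $a$ satisfies $a^2 - a \in I^n$, I would set $a' = 3a^2 - 2a^3$ (equivalently $a' = a + (1-2a)(a^2-a)$, the standard Newton-type correction) and check that $a' \equiv a \pmod{I^n}$, hence $a' \equiv a \pmod I$, and that $a'^2 - a' \in I^{2n}$. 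Iterating, after finitely many steps (at most $\lceil \log_2 m\rceil$) the defect lands in $I^m = 0$, giving a genuine idempotent $f \in A$ with $f + I = e$. The one computation worth doing carefully is the identity $a'^2 - a' = (a^2-a)^2(\text{something})$; writing $r = a^2 - a$, one expands $a'^2 - a'$ and collects terms to see it is divisible by $r^2$, so $a'^2-a' \in (I^n)^2 = I^{2n}$.

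For the "moreover" part, suppose $e$ is primitive in $A/I$ but some lift $f$ is not primitive in $A$, so $f = g + h$ with $g, h$ nonzero orthogonal idempotents in $A$. Reducing mod $I$ gives $e = \bar g + \bar h$ with $\bar g, \bar h$ orthogonal idempotents in $A/I$; since $e$ is primitive, one of them, say $\bar g$, is $0$, i.e.\ $g \in I$. But $g$ is idempotent, so $g = g^m \in I^m = 0$, contradicting $g \neq 0$. Hence $f$ is primitive. (The same argument handles the case $\bar h = 0$ symmetrically.)

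The main obstacle — really the only non-routine point — is verifying that the correction step genuinely improves the defect from $I^n$ to $I^{2n}$ (or at least to $I^{n+1}$, which would already suffice for termination), and in particular that the formula for $a'$ is built only from $a$ and elements of $I^n$ so that $a'$ stays in the correct coset. Everything else is bookkeeping with the nilpotency bound $I^m = 0$. An alternative to the quadratic Newton iteration is the slower linear one: replace $a$ by $a + (1-2a)r$ and show the defect improves from $I^n$ to $I^{n+1}$; this avoids the cleaner-looking but slightly fiddlier $r^2$-divisibility computation at the cost of more iterations, and either version completes the proof.
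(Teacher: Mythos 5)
Your proposal is correct and takes essentially the same route as the paper: both use the Newton-type correction $a \mapsto 3a^2 - 2a^3$ (equivalently $a + (1-2a)(a^2-a)$) to lift the idempotent, relying on the identity $a'^2 - a' = -(3-2a)(1+2a)(a^2-a)^2$, and both prove primitivity by reducing an orthogonal decomposition $f = f_1 + f_2$ mod $I$, using primitivity of $e$ to force one $f_i$ into $I$, and then observing that a nilpotent ideal contains no nonzero idempotent since $f_i = f_i^m \in I^m = 0$. The only cosmetic difference is that the paper tracks the improvement one power at a time through the quotients $A/I^i$ (the ``linear'' bookkeeping), whereas you also note the sharper quadratic improvement $I^n \to I^{2n}$ is available from the same formula; and your ``alternative'' linear iteration $a + (1-2a)r$ is in fact the same polynomial as $3a^2 - 2a^3$, not a different one.
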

\begin{proof}
We define idempotents $e_i\in A/I^i$ inductively such that $e_i+I^{i-1}/I^i=e_{i-1}$. Suppose $e_{i-1}$ is an idempotent of $A/I^{i-1}$. Pick any element $a\in A/I^i$ mapping onto $e_{i-1}$ so that $a^2-a\in I^{i-1}/I^i$. As $(I^{i-1})^2 \subseteq I^i$, $(a^2-a)^2$ is zero. Define $e_i=3a^2-2a^3$. Then $e_i^2-e_i=(3a^2-2a^3)(3a^2-2a^3-1)=-(3-2a)(1+2a)(a^2-a)^2=0$ so $e_i$ is an idempotent. This completes the induction definition. Pick an integer $r$ satisfying $I^r=0$. Define $f$ as $e_r$. Then the theorem is proved by induction. Assume $e$ is primitive. For contradiction, suppose $f$ can be written as $f=f_1+f_2$ where $f_1$ and $f_2$ are orthogonal idempotents. Then $e=e_1+e_2$ where $e_i=f_i+I$ and it is also a sum of orthogonal idempotents. By the assumption on $e$, either $e_1=0$ or $e_2=0$. Without loss of generality, suppose $e_1=0\in A/I$. Then ${f_1}^2=f_1\in I$. Since $I$ is nilpotent, the ideal contains no nonzero idempotent.
\end{proof}

\subparagraph{Decomposition of A Regular Module with Idempotents}
The following proposition \cite[Proposition 7.2.1]{Webb_2016} shows that in a regular module, indecomposable modules have one-to-one correspondence with primitive orthogonal idempotents.
$${}_AA \cong Ae_1\oplus\cdots\oplus Ae_n,$$
where $e_1,\cdots, e_n$ are primitive orthogonal idempotents in $A$ satisfying $1=e_1+\cdots+e_n$. In the group algebra $kG$, 
$$kG=kGe_1\oplus\cdots\oplus kGe_n.$$
where $e_1,\cdots, e_n$ are primitive orthogonal idempotent in $kG$ satisfying $1=e_1+\cdots e_n$.

For a finitely generated algebra, every idempotent can be expressed in a sum of primitive orthogonal idempotents.

\begin{Proposition}\label{regmoddecompwithidempots}
Let $A$ be a ring. Then
\begin{itemize}
\item[(i)]
The decompositions of the regular representation as a direct sum of submodules ${}_AA=A_1 \oplus \cdots \oplus A_r$ biject with expressions for the identity of $A$ as a sum of orthogonal idempotents $1=e_1+\cdots+e_r$ in such a way that $A_i=Ae_i$.

\item[(ii)]
$A_i$ is indecomposable if and only if the corresponding idempotent $e_i$ is primitive.
\end{itemize}
\end{Proposition}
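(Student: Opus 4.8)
The plan is to prove the two statements in turn, using the correspondence between direct sum decompositions of ${}_AA$ and decompositions of the identity into orthogonal idempotents.

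\medskip

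\noindent\textbf{Proof of (i).} First I would show how a decomposition ${}_AA = A_1 \oplus \cdots \oplus A_r$ produces orthogonal idempotents. Write $1 = e_1 + \cdots + e_r$ with $e_i \in A_i$; this expression is unique because the sum is direct. To see the $e_i$ are orthogonal idempotents, multiply on the left by a fixed $e_j$: we get $e_j = e_j e_1 + \cdots + e_j e_r$, and since $e_j e_i \in A_i$ (as $A_i$ is a left ideal, being a submodule of ${}_AA$) while also $e_j \in A_j$, uniqueness of the decomposition of $e_j$ forces $e_j e_i = \delta_{ij} e_j$. Hence $e_j^2 = e_j$ and $e_j e_i = 0$ for $i \neq j$. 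Next, I would check $A_i = Ae_i$: clearly $Ae_i \subseteq A_i$ since $A_i$ is a left ideal containing $e_i$; conversely any $a \in A_i$ satisfies $a = a\cdot 1 = ae_1 + \cdots + ae_r$ with $ae_i \in A_i$ and $ae_j \in A_j$, so again by directness $a = ae_i \in Ae_i$. Conversely, given orthogonal idempotents with $1 = e_1 + \cdots + e_r$, set $A_i = Ae_i$; then for any $a \in A$ we have $a = ae_1 + \cdots + ae_r$, so $A = A_1 + \cdots + A_r$, and if $\sum a_i e_i = 0$ then multiplying on the right by $e_j$ and using orthogonality gives $a_j e_j = 0$, so the sum is direct. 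These two constructions are mutually inverse, establishing the bijection.

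\medskip

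\noindent\textbf{Proof of (ii).} Here the key observation is that a direct sum decomposition $A_i = B \oplus C$ of a summand refines the whole decomposition, and under the bijection of (i) corresponds to splitting $e_i$ as a sum of two orthogonal idempotents. Concretely, if $A_i$ is decomposable, say $Ae_i = B \oplus C$ with $B, C \neq 0$, then applying (i) to this decomposition of the ring $Ae_i$ (which has identity $e_i$) yields orthogonal idempotents $f, g \in Ae_i$ with $e_i = f + g$ and $B = Af$, $C = Ag$; since $B, C \neq 0$ we have $f, g \neq 0$, so $e_i$ is not primitive. Conversely, if $e_i = f + g$ with $f, g$ nonzero orthogonal idempotents, then $Ae_i = Af \oplus Ag$ by the same argument as in (i) applied inside $Ae_i$, and $Af, Ag \neq 0$, so $A_i$ is decomposable. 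Thus $A_i$ is indecomposable if and only if $e_i$ is primitive.

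\medskip

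\noindent I expect the main subtlety to be bookkeeping with ``idempotents relative to $e_i$'': the elements $f, g$ obtained from decomposing $Ae_i$ are idempotents in the (non-unital) subring $Ae_i = e_i A e_i \oplus (\text{stuff})$ — more precisely one must be careful that $f = e_i f = f e_i$, so that $f$ really is an idempotent of $A$ lying in $A_i$, and that orthogonality of $f, g$ in the decomposition of $Ae_i$ is genuine orthogonality in $A$. This is checked by noting $f \in Ae_i$ means $f = f e_i$, and $f = f \cdot 1 = f e_i = f(f+g) = f^2 + fg = f + fg$ forces $fg = 0$, and symmetrically $gf = 0$; everything else is the routine directness arguments already used in (i).
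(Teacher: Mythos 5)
Your proof of (i) is correct and follows essentially the same route as the paper's: from a decomposition $A = A_1 \oplus \cdots \oplus A_r$, write $1 = e_1 + \cdots + e_r$ with $e_i \in A_i$, multiply on the left by a fixed $e_j$, and use that each $A_i$ is a left ideal together with uniqueness of direct-sum components to force $e_j e_i = \delta_{ij} e_j$; the converse is the same kind of directness check. You additionally verify $A_i = Ae_i$ explicitly, which the paper states but does not spell out.

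For (ii) the paper is terse---it simply asserts that, by (i), refining a summand $A_i$ corresponds to splitting $e_i$ into a sum of two orthogonal idempotents---and your proof supplies the missing detail. Your variant, decomposing $Ae_i$ directly, is valid, but two points in the presentation should be tightened. First, $Ae_i$ is not a unital ring with identity $e_i$ in general ($e_i$ is only a \emph{right} identity of $Ae_i$ unless $e_i$ is central), so you cannot literally ``apply (i) to the ring $Ae_i$''; what you actually want is the same argument applied to the left $A$-module $Ae_i$: given $Ae_i = B \oplus C$, write $e_i = f + g$ with $f \in B$, $g \in C$, observe that $f^2 \in B$ and $fg \in C$ because $B$ and $C$ are $A$-submodules, and conclude $f^2 = f$ and $fg = 0$ from uniqueness of the direct-sum components. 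Second, your closing check writes $f = f e_i = f(f+g) = f^2 + fg = f + fg$, but the last equality already uses $f^2 = f$, which is precisely what needs to be proved---this step is circular and should be replaced by the uniqueness argument just described. Neither issue is a fatal gap; the mathematics you intend is right, and the paper's alternative route (refine $A = A_1 \oplus \cdots \oplus A_r$ to $A = A_1 \oplus \cdots \oplus B \oplus C \oplus \cdots \oplus A_r$ and apply (i) to the full ring) sidesteps both subtleties at the cost of saying nothing about where they went.
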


\begin{proof}
\begin{itemize}
\item[(i)]
Suppose $1=e_1+ \cdots + e_r$ is an expression for the identity as a sum of orthogonal idempotents $e_i$. Then $ A= Ae_1 \oplus \cdots \oplus Ae_r$ for the submodules $Ae_i$ of $A$. For any $x\in A$, $x=x\cdot 1 = x(e_1 +\cdots + e_r)=xe_1+\cdots +xe_r$. This shows that $Ae_1 + \cdots + Ae_r=A$. If $x\in Ae_i \cap \sum_{j\neq i}Ae_j$, then $x=xe_i$ and $x=\sum_{j\neq i}a_je_j$. $x=xe_i=\sum_{j\neq i}a_je_je_i =0$, so $Ae_i \cap Ae_j=0$ for all $1 \leq i\neq j \leq r$. Coversely, suppose that $A=A_1 \oplus \cdots \oplus A_r$, a direct sum of submodules $A_i$. Then $1=e_1 +\cdots + e_r$ where $e_i \in A_i$ is a uniquely determined element. $e_i = e_i 1= e_ie_1+\cdots+e_ie_r$ is an expression in which $e_ie_j\in A_j$. This is because $A_j$ is a submodule, so $e_ie_j\in A_j$. The only such expression of $e_i$ is $e_i$, because $e_i\notin A_j$ for $i\neq j$ and $e_i \in A_i$. Thus, $e_ie_j=0$ for $i\neq j$ and $e_ie_i=e_i\in A_i$. This shows that the components $e_1, \cdots, e_r$ are orthogonal idempotents.
\item[(ii)]
(i) shows that an expression for 1 as a sum of idempotents corresponds to a module direct sum decomposition into submodules. This means that a summand $A_i$ decomposes as a direct sum of two other summands if and only if an orthogonal idempotent $e_i$ decomposes as a sum of two other orthogonal idempotents. This is equivalent to the statement that $A_i$ is indecomposable if and only if $e_i$ is primitive.
\end{itemize}
\end{proof}

\subsection{A Decomposition into Projective Indecomposable Modules}

In a regular module ${}_AA$ where $A$ is finite-dimensional algebra, projective indecomposable modules have one-to-one correspondence with simple modules. This can be proved by \ref{regmoddecompwithidempots} that projective indecomposable modules biject with primitive idempotents in $A$. The following theorem and proof are based on \cite[Theorem 7.3.8]{Webb_2016}.

\begin{Theorem}\label{Onetoonecorrespondenceprojectivesimple}

Let $A$ be a finite-dimensional algebra over a field and $S$ be a simple $A$-module. Then

\begin{itemize}
\item[(i)]
There exists an indecomposable projective module $P$ such that $P/\Rad P \cong S$, and $P$ is of the form $Af$ where $f$ is a primitive idempotent in $A$.

The idempotent $f$ has the property that $fS \neq 0$ and $fT=0$ if $T \ncong S$.

\item[(ii)]
$P$ is the projective cover of $S$, i.e., $P=P_S$.

 $P$ is uniquely determined up to isomorphism.
 
 $P$ has $S$ as its unique simple quotient.

\end{itemize}

\end{Theorem}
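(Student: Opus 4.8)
The plan is to build the module $P$ from a decomposition of the regular module into projective indecomposables and then track where the simple $S$ appears as a quotient. First I would invoke Proposition~\ref{regmoddecompwithidempots}: write $1 = e_1 + \cdots + e_n$ as a sum of primitive orthogonal idempotents, so that ${}_AA = Ae_1 \oplus \cdots \oplus Ae_n$ with each $Ae_i$ an indecomposable projective module. Taking radicals and using Proposition~\ref{A/RadA semisimple}(iii), the head decomposes as $A/\rad A \cong \bigoplus_i (Ae_i)/\rad(Ae_i)$. Since $A/\rad A$ is semisimple and $S$ is simple, $S$ must be a summand of this head, hence $S \cong (Ae_i)/\rad(Ae_i)$ for at least one index $i$; set $f := e_i$ and $P := Af$. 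Note each $(Ae_i)/\rad(Ae_i)$ is simple because $Ae_i$ is indecomposable projective and therefore has an essential surjection onto its (semisimple, hence necessarily simple) head by Nakayama's Lemma~\ref{nakayamalemma} — were the head to split as a sum of two nonzero semisimple pieces, one could lift a corresponding idempotent using Lemma~\ref{eidempfidemp} and contradict primitivity of $e_i$.

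Next I would establish (ii). Since $f$ is primitive, $P = Af$ is indecomposable, and the quotient map $P \to P/\rad P \cong S$ is essential by Nakayama's Lemma~\ref{nakayamalemma}; together with projectivity of $P$ this is exactly the definition of a projective cover, so $P = P_S$. Uniqueness up to isomorphism is then immediate from Theorem~\ref{uniqueness projective cover}(2). For the statement that $S$ is the unique simple quotient: any simple quotient $P \twoheadrightarrow T$ kills $\rad P$ (a simple module has zero radical, and the image of $\rad P$ is contained in $\rad T = 0$ by Proposition~\ref{A/RadA semisimple}(iii)), so it factors through $P/\rad P \cong S$, forcing $T \cong S$.

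Finally I would verify the idempotent identities $fS \neq 0$ and $fT = 0$ for simple $T \ncong S$. For a simple module $T$, the action of $f$ on $T$ and the space $\Hom_A(Af, T)$ are linked: a homomorphism $Af \to T$ is determined by the image of $f$, which must lie in $fT$ (since $f$ acts as the identity on the cyclic generator $f \in Af$, any $\varphi$ satisfies $\varphi(f) = \varphi(f^2) = f\varphi(f) \in fT$), and conversely every element of $fT$ arises this way; hence $fT \cong \Hom_A(Af, T)$ as $k$-spaces. Because $P = Af$ is the projective cover of $S$ with unique simple quotient $S$, we get $\Hom_A(Af, T) \neq 0$ iff $T$ is a quotient of $P$ iff $T \cong S$. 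Therefore $fS \neq 0$ while $fT = 0$ whenever $T \ncong S$.

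The main obstacle is the interface between module-theoretic and idempotent-theoretic language in the last step: one must be careful that $\Hom_A(Af, T) \cong fT$ genuinely holds (this uses $f^2 = f$ so that $f$ is a nonzero generator of $Af$ fixed by left multiplication by $f$), and that "$T$ is a quotient of the projective cover $P_S$" is equivalent to "$T \cong S$", which relies on the uniqueness of the simple quotient established in (ii). Everything else is a routine assembly of Proposition~\ref{regmoddecompwithidempots}, Lemma~\ref{eidempfidemp}, Nakayama's Lemma~\ref{nakayamalemma}, and Theorem~\ref{uniqueness projective cover}.
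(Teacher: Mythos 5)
Your overall strategy differs from the paper's: you decompose ${}_AA = Ae_1 \oplus \cdots \oplus Ae_n$ using primitive orthogonal idempotents of $A$ and then pass to the head, whereas the paper decomposes $A/\rad A$ first, locates the primitive idempotent $\bar e$ there that acts nonzero on $S$, and lifts it to $A$ via Lemma~\ref{eidempfidemp}. The two routes are not equivalent in rigour here, and that is where the gap lies.

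The critical claim in your argument is that each head $(Ae_i)/\rad(Ae_i)$ is simple, i.e.\ that the image $\bar e_i$ of the primitive idempotent $e_i$ in $A/\rad A$ is again primitive. You cite Lemma~\ref{eidempfidemp} for this, but that lemma states the \emph{opposite} direction: it says that if an idempotent of $A/I$ is primitive, then any lift of it to $A$ is primitive. It does not say that a primitive idempotent of $A$ has primitive image modulo $I$. If you try to run the argument you sketch --- suppose $\bar e_i = \bar g + \bar h$ with $\bar g, \bar h$ nonzero orthogonal, lift them to idempotents $g, h$ of $A$ --- you obtain two orthogonal idempotents whose sum $g+h$ lifts $\bar e_i$, but there is no reason for $g+h$ to equal $e_i$, so no contradiction with the primitivity of $e_i$ is immediate. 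Filling this hole requires something beyond Lemma~\ref{eidempfidemp} as stated: for instance, a refined lifting performed inside the corner ring $e_i A e_i$ (which gives $g \in e_iAe_i$ and $h = e_i - g$), a conjugacy argument for lifts, or the local-ring characterization $\End_A(Ae_i) \cong e_iAe_i$ of indecomposable projectives. None of these is established in the paper at this point. The paper's choice to decompose $A/\rad A$ rather than $A$ is precisely what sidesteps the issue: it only ever uses the lifting direction that Lemma~\ref{eidempfidemp} actually provides, and then computes $Af/\rad(Af) \cong (A/\rad A)\bar e \cong S$ directly.

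The rest of your argument --- part (ii) via Nakayama and the uniqueness theorem, the factorization of any simple quotient through $P/\rad P$, and the identification $\Hom_A(Af, T) \cong fT$ with the conclusion $fS \neq 0$, $fT = 0$ for $T \not\cong S$ --- is correct and matches the paper's reasoning where they overlap. You should either switch to the paper's ``lift from the semisimple quotient'' order of operations, or explicitly strengthen the idempotent-lifting step (e.g.\ lift inside $e_iAe_i$) before asserting that the head of $Ae_i$ is simple.
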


\begin{proof} 
\begin{itemize}
\item[(i)]
By \ref{A/RadA semisimple} and \ref{regmoddecompwithidempots}, $1=e_1+\cdots+e_n$ where $e_i$ are primitive orthogonal idempotents in $A/\rad A$. Then $S= 1\cdot S=(e_1+\cdots+e_n)\cdot S= e_1\cdot S \oplus \cdots \oplus e_nS$. $S$ is simple, so there exists $e_k$ such that $e_kS=S$ and $e_iS=0$ for $i\neq k$. Define $e$ as $e_k$. This shows that the primitive idempotent $e$ in $A/\Rad A$ acts nonzero on a unique isomorphism class of simple modules. Any lift $f$ of $e$ to $A$ is primitive by \ref{eidempfidemp}, $fS=S$ and $fT=0$ if $T \ncong S$. The map $\Psi: Af \rightarrow (A/\rad A)\cdot(f+\rad A)$ defined by $\Psi(af)= af+\rad A$ has the kernel $(\rad A)\cdot f$ and $Af/(\rad A\cdot Af) \cong (A/\rad A)\cdot(f+ \rad A)$ by the first isomorphism theorem. $\rad Af=\rad A\cdot Af$ and $(A/\rad A)\cdot(f+ \rad A)=(A/\rad A)\cdot e=S$. $Af/\rad Af\cong S$. Define $P$ as $Af$. By one-to-one correspondence in \ref{regmoddecompwithidempots}, $P$ is indecomposable. $P$ satisfies $P/\rad P \cong S$.

\item[(ii)] By Nakayama's Lemma \ref{nakayamalemma}, $P$ is the projective cover of $S$, $P_S$. By the uniqueness of projective cover in \ref{uniqueness projective cover}, $P$ is uniquely determined up to isomorphism. Any simple quotient of $P$ is a quotient of $P/\rad P$ that is isomorphic to $S$. The simple quotient of $P$ is $S$.
\end{itemize}
\end{proof}

Schur's Lemma tells that for two non-isomorphic simples, no homomorphism exists from one to another. Because the lemma is related to homomorphism spaces, it is sometimes used when we want to find submodules or quotients of a module or the multiplicity of a simple in a module.

\begin{Proposition}[Schur's Lemma]
Let $R$ be a ring and $S$ and $T$ be simple left (or right) $R$-modules. Then $\End_R(S)$ is a division ring and if $\Hom_R(S,T)\neq 0$, then $S\cong T$.

Over an algebraically closed field $k$, $\End_R(S)\cong k$.
\end{Proposition}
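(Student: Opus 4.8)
The plan is to split the statement into three parts and prove each in turn: (a) every nonzero homomorphism between simple modules is an isomorphism; (b) consequently $\Hom_R(S,T) = 0$ when $S \ncong T$, and $\End_R(S)$ is a division ring; (c) over an algebraically closed field $k$, $\End_R(S) \cong k$ when $S$ is finite-dimensional.

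First I would take an arbitrary nonzero $\varphi \in \Hom_R(S,T)$ and consider $\Ker \varphi$ and $\im \varphi$. Since $\varphi$ is a module homomorphism, $\Ker \varphi$ is a submodule of $S$; as $S$ is simple and $\varphi \neq 0$, we must have $\Ker \varphi = 0$, so $\varphi$ is injective. Likewise $\im \varphi$ is a nonzero submodule of $T$, and simplicity of $T$ forces $\im \varphi = T$, so $\varphi$ is surjective. Hence $\varphi$ is an isomorphism, which proves (a) and in particular shows $\Hom_R(S,T) = 0$ whenever $S \ncong T$. Applying this with $T = S$: any nonzero endomorphism of $S$ is an isomorphism, hence invertible in $\End_R(S)$; since $\End_R(S)$ is in any case a ring (with $1 = \mathrm{id}_S \neq 0$ because $S \neq 0$), it is a division ring.

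For the last claim, suppose $k$ is algebraically closed, $S$ is a simple $R$-module that is finite-dimensional over $k$ (which holds here since $A$ is finite-dimensional), and regard $\End_R(S)$ as a $k$-algebra inside $\End_k(S)$. Take any $\theta \in \End_R(S)$. Because $S$ is a nonzero finite-dimensional $k$-vector space, $\theta$ has an eigenvalue $\lambda \in k$ (here is where algebraic closedness enters: the characteristic polynomial of $\theta$ splits over $k$). Then $\theta - \lambda\cdot \mathrm{id}_S \in \End_R(S)$ has nonzero kernel, so by part (b) it cannot be an isomorphism, forcing $\theta - \lambda\cdot\mathrm{id}_S = 0$, i.e. $\theta = \lambda\cdot\mathrm{id}_S$. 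Thus the map $k \to \End_R(S)$, $\lambda \mapsto \lambda\cdot\mathrm{id}_S$, is surjective; it is clearly an injective ring homomorphism, so $\End_R(S) \cong k$.

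The only real subtlety — the ``main obstacle'' — is the finiteness hypothesis needed in the last part: the eigenvalue argument requires $S$ to be finite-dimensional over $k$ (an endomorphism of an infinite-dimensional space need not have an eigenvalue). In the setting of this dissertation $A$ is finite-dimensional, so every $A$-module appearing is finite-dimensional and this is automatic; I would simply note that the final sentence is to be read under that standing assumption. Everything else is the elementary kernel/image argument.
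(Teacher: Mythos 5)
Your kernel/image argument for the division-ring statement and for $\Hom_R(S,T)=0$ when $S\ncong T$ is exactly the paper's proof. The one difference is that the paper asserts $\End_R(S)\cong k$ over an algebraically closed field without proof, whereas you supply the standard eigenvalue argument (pick $\lambda$ an eigenvalue of $\theta$, observe $\theta-\lambda\cdot\mathrm{id}_S$ has nonzero kernel hence is zero), and you correctly flag the finite-dimensionality hypothesis that makes the eigenvalue exist. That added step is sound and fills a gap the paper leaves implicit.
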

\begin{proof}
Let $\theta$ be a nonzero homomorphism in $\End_R(S)$. Then $\im(\theta)$ is nonzero and is a submodule of the simple $S$, so it is $S$. This implies $\theta$ is surjective. As $\theta$ is not a zero map, $\ker(\theta)$ is a proper submodule of $S$, so is zero. This implies $\theta$ is injective. It follows that $\theta$ is a bijection. Suppose $\Hom_R(S,T)$ is nonzero. Then there exists a nonzero homomorphism $f\in \Hom_R(S,T)$. $\im(f)$ is a submodule of $T$, $0\neq \im(f)\leq T$. As $T$ is simple, $\im(f)=T$. $f$ is nonzero implies $\ker(f)$ is a proper submodule of $S$, so $\ker(f)=0$. This shows $f$ is an isomorphism, so $S\cong T$.
\end{proof}

\begin{Example}
$\mathbb{Z}_4$ is not a simple $\mathbb{Z}$-module by the Schur's Lemma, as the morphism $\theta: \mathbb{Z} \rightarrow \mathbb{Z}_4$ defined by $\theta(x)=2x$ is not invertible, so $\End_{\mathbb{Z}}(\mathbb{Z}_4)$ is not a division ring.
\end{Example}

\begin{Theorem}[Artin-Wedderburn Theorem]\label{Artinwedderburnthm}\index{Artin-Wedderburn Theorem}

Let $A$ be a finite-dimensional algebra over a field $k$ so that every finite-dimensional module is semisimple.
Then $A$ is a direct sum of matrix algebras over division rings. That is,
$$A \cong M_{n_1}(D_1)\oplus \cdots \oplus M_{n_r}(D_r)$$
for some $n_1, \cdots, n_r \in \mathbb{N}$ and division rings $D_1, \cdots, D_r$, which are uniquely determined up to ordering.

The matrix algebra summands are uniquely determined as subsets of $A$.

\end{Theorem}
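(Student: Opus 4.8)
The plan is to exploit the hypothesis that every finite-dimensional $A$-module is semisimple, applied in particular to the regular module ${}_AA$. First I would decompose ${}_AA$ as a direct sum of simple submodules and collect the isomorphic ones, so that ${}_AA \cong S_1^{n_1} \oplus \cdots \oplus S_r^{n_r}$ where $S_1,\dots,S_r$ are the distinct simple $A$-modules (up to isomorphism) and $n_i$ is the multiplicity of $S_i$ in ${}_AA$. The key structural tool is the ring isomorphism $A \cong \End_A({}_AA)^{\op}$, given by sending $a \in A$ to right multiplication by $a$; this is the standard identification that turns a module-theoretic decomposition of ${}_AA$ into a ring-theoretic decomposition of $A$.

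Next I would compute $\End_A({}_AA)^{\op}$ using the decomposition above. Since $\Hom_A(S_i, S_j) = 0$ for $i \neq j$ by Schur's Lemma, the endomorphism ring splits as a direct product $\End_A({}_AA) \cong \prod_{i=1}^r \End_A(S_i^{n_i})$. For each $i$, a standard matrix argument gives $\End_A(S_i^{n_i}) \cong M_{n_i}\bigl(\End_A(S_i)\bigr)$, realizing an endomorphism of a direct sum of $n_i$ copies of $S_i$ as an $n_i \times n_i$ matrix of endomorphisms of $S_i$. By Schur's Lemma, $D_i := \End_A(S_i)$ is a division ring. Taking opposite rings and using $M_{n_i}(D_i)^{\op} \cong M_{n_i}(D_i^{\op})$ (transpose is an anti-automorphism) and relabelling $D_i^{\op}$ as the division ring in the statement, I obtain $A \cong M_{n_1}(D_1) \oplus \cdots \oplus M_{n_r}(D_r)$.

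For uniqueness, I would identify the matrix summands intrinsically: the summand $M_{n_i}(D_i)$, as a subset of $A$, is generated by a block of primitive idempotents and can be recovered as the sum of all submodules of ${}_AA$ isomorphic to $P_{S_i}$ (equivalently, as the two-sided ideal $A e A$ for a suitable primitive idempotent $e$), which depends only on the isomorphism class $S_i$ and not on the chosen decomposition. Hence the summands are uniquely determined as subsets of $A$, and the pairs $(n_i, D_i)$ are uniquely determined up to reordering — the $n_i$ as multiplicities of simples in ${}_AA$ and the $D_i$ as the endomorphism division rings of the simples. The main obstacle I anticipate is not any single deep step but rather being careful and consistent about the opposite-ring bookkeeping: the isomorphism $A \cong \End_A({}_AA)^{\op}$, the behaviour of $M_n(-)$ under $(-)^{\op}$, and the left/right module conventions must all be tracked so that the division rings come out on the correct side; getting a clean statement requires committing to one convention throughout.
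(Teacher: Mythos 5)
Your proposal follows essentially the same route as the paper: decompose the semisimple regular module ${}_AA$ into isotypic components $S_i^{n_i}$, apply Schur's Lemma to see that $\End_A({}_AA)$ is block-diagonal with blocks $M_{n_i}(\End_A(S_i))$, and transfer back through the ring isomorphism $A \cong \End_A({}_AA)^{\op}$, being careful with opposite rings. In fact your treatment is slightly more complete than the paper's, which leaves the uniqueness of the matrix summands essentially unaddressed, whereas you correctly identify each summand intrinsically as the $S_i$-isotypic component of ${}_AA$ (a two-sided ideal depending only on the isomorphism class of $S_i$).
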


\begin{proof}

We use the fact that every such direct sum of matrix algebras is semisimple. In other words, each matrix algebra over a division ring has a unique simple module up to isomorphism. If there exists a direct sum decomposition $U=U_1 \oplus \cdots \oplus U_r$ of a module $U$, then $\End_A(U)$ is isomorphic to the algebra of $r\times r$ matrices $M_{ij}$ in which $M_{ij} \in \Hom_A(U_i, U_j)$. Any endomorphism $\phi: U \rightarrow U$ may be written as a matrix of components $\phi=(\phi_{ij})$ where $\phi_{ij}: U_j \rightarrow U_i$. In this perspective, composition of endomorphisms is viewed as matrix multiplication. This correspondence will be used in decomposition of $A$.

By Schur's Lemma, $\Hom_A(S_j^{n_j}, S_i^{n_i})=0$ for $i \neq j$. This shows that $\End_A(A) \cong \End_A(S_1^{n_1}) \oplus \cdots \oplus \End_A(S_r^{n_r})$ and $\End_A(S_i^{n_i}) \cong M_{n_i}(D_i^{\op})$. Evidently, $M_{n_i}(D_i^{\op})^{\op} \cong M_{n_i}(D_i)$. By  , this implies $\End_A(A) = A^{\op}$. This shows the theorem, as desired.

If $k$ is algebraically closed, by Schur's Lemma, $D_i=k$ for all $i$.
\end{proof}

We've seen that given a simple module, there is a projective indecomposable module that is exactly the projective cover of the given simple module. In fact, there is a one-to-one correspondence between projective indecomposable modules and simple modules, that is, each projective indecomposable module is exactly the projective cover of a simple module and is uniquely determined up to isomorphism. The one-to-one correspondence is showed by the following theorem. Our next theorem is important throughout the topic, in that the theorem gives a regular module decomposition ${}_AA=A_1 \oplus \cdots \oplus A_n$ for projective indecomposable modules $A_i$ and shows the decomposition is uniquely determined up to isomorphism. In the proof, Artin-Wedderburn Theorem \cite[Theorem 2.1.3]{Webb_2016} is used on $A/\rad A$ as it is semisimple. We use this theorem for decomposing given group algebras into a direct sum of projective indecomposable modules in chapter 4, 5 and 6. The following theorem and proof are based on \cite[Theorem 7.3.9]{Webb_2016}.

\begin{Theorem}\label{regmodAdecompassumofprojcovers}

Let $A$ be a finite-dimensional algebra over a field $k$.
\begin{enumerate}
\item Each projective indecomposable $A$-module is the projective cover of a simple module and is uniquely determined up to isomorphism of simple.

That is, if $Q$ is a projective indecomposable module, then $Q=P_S$ for some simple module $S$. Moreover, $P_S \cong P_T$ if and only if $S \cong T$.

\item Each projective cover of simple module is a direct summand of ${}_AA$ where its multiplicity is the multiplicity of $S$ as a summand of $A/\Rad A$. In other words, ${}_AA$ is decomposed as 
$${}_AA \cong \bigoplus_{\text{simple } S} {P_S}^{n_S}$$
where $n_S=\dim_k S$ if $k$ is algebraically closed. In general, $n_S=\dim_D(S)$ where $D=\End_A(S)$.
\end{enumerate}

\end{Theorem}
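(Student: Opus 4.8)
The plan is to deduce both statements from the apparatus already in place, using Nakayama's Lemma (Theorem~\ref{nakayamalemma}) and the uniqueness of projective covers (Theorem~\ref{uniqueness projective cover}) for part~(1), and Artin--Wedderburn (Theorem~\ref{Artinwedderburnthm}) applied to the semisimple head $A/\rad A$ for the multiplicity count in part~(2). Throughout I would use freely that for any finitely generated module $M$ one has $\rad M = \rad A\cdot M$ (Proposition~\ref{A/RadA semisimple}(iii)), so that $\rad$ commutes with finite direct sums, and that each $P_S$ satisfies $P_S/\rad P_S\cong S$ with $P_S$ indecomposable projective (Theorem~\ref{Onetoonecorrespondenceprojectivesimple}).

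For part~(1), let $Q$ be a projective indecomposable $A$-module. By Nakayama's Lemma the quotient map $Q\to Q/\rad Q$ is an essential epimorphism, and $Q$ is projective, so $Q$ is a projective cover of the semisimple module $Q/\rad Q$. I would then show $Q/\rad Q$ is simple. Suppose instead $Q/\rad Q\cong S_1\oplus\cdots\oplus S_m$ nontrivially with $m\ge 2$. The kernel of the projective cover map $f_i\colon P_{S_i}\to S_i$ is the unique maximal submodule $\rad P_{S_i}$, so the surjection $\bigoplus_i P_{S_i}\to\bigoplus_i S_i$ has kernel $\bigoplus_i\rad P_{S_i}=\rad\bigl(\bigoplus_i P_{S_i}\bigr)$; that is, it is the radical quotient map of $\bigoplus_i P_{S_i}$, hence essential by Nakayama, so $\bigoplus_i P_{S_i}$ is a projective cover of $Q/\rad Q$. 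Uniqueness of projective covers (Theorem~\ref{uniqueness projective cover}) gives $Q\cong\bigoplus_i P_{S_i}$, contradicting indecomposability since each $P_{S_i}\neq 0$. Hence $S:=Q/\rad Q$ is simple and $Q=P_S$. Finally, $P_S\cong P_T$ implies $S\cong P_S/\rad P_S\cong P_T/\rad P_T\cong T$, and conversely $S\cong T$ implies $P_S\cong P_T$ because a projective cover of $S$ is a projective cover of $T$ and projective covers are unique.

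For part~(2), since $A$ is finite-dimensional it is Artinian, so writing $1$ as a sum of primitive orthogonal idempotents $1=e_1+\cdots+e_n$ (the process must terminate by a dimension count) and applying Proposition~\ref{regmoddecompwithidempots} gives ${}_AA=Ae_1\oplus\cdots\oplus Ae_n$ with each $Ae_i$ indecomposable. Each $Ae_i$ is a direct summand of the free module ${}_AA$, hence projective, so by part~(1) $Ae_i\cong P_{S_i}$ for a simple $S_i$; collecting isomorphic summands, ${}_AA\cong\bigoplus_{\text{simple }S}P_S^{\,n_S}$ for some multiplicities $n_S$. Taking radical quotients and using $P_S/\rad P_S\cong S$ again, $A/\rad A\cong\bigoplus_S S^{\,n_S}$, so $n_S$ is precisely the multiplicity of $S$ in the semisimple module $A/\rad A$; this is well defined. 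By Artin--Wedderburn, $A/\rad A\cong M_{n_1}(D_1)\oplus\cdots\oplus M_{n_r}(D_r)$, the simple modules are the column modules $S_i=D_i^{\,n_i}$, and each matrix summand $M_{n_i}(D_i)$ is $\cong S_i^{\,n_i}$ as a left module; hence $n_{S_i}=n_i=\dim_{D_i}S_i=\dim_{\End_A(S_i)}S_i$, and if $k$ is algebraically closed Schur's Lemma gives $D_i=k$, so $n_{S_i}=\dim_k S_i$.

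I expect the main obstacle to be the reduction in part~(1): proving, with only the tools of the excerpt, that a projective cover of a semisimple module $\bigoplus_i S_i$ is necessarily $\bigoplus_i P_{S_i}$. The efficient route above is to identify that surjection with the radical quotient map of $\bigoplus_i P_{S_i}$, so that Nakayama's Lemma supplies essentiality directly; the natural alternative --- invoking the Krull--Schmidt theorem on ${}_AA$ to write $Q$ as some $Ae_i$ --- is not available from what precedes, so the argument has to be routed through the radical. A secondary, and harmless, point is the $\End_A(S)$ versus $\End_A(S)^{\op}$ ambiguity in the general dimension formula, which disappears over an algebraically closed field; I would simply note this.
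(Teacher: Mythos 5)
Your proposal is correct; part~(1) follows the paper's line, while part~(2) takes a genuinely different route. In part~(1) both you and the paper write $Q/\rad Q\cong S_1\oplus\cdots\oplus S_n$, recognise both $Q$ and $\bigoplus_i P_{S_i}$ as projective covers of this semisimple module, invoke uniqueness, and then let indecomposability force $n=1$; the difference is that the paper merely asserts $\bigoplus_i P_{S_i}\to\bigoplus_i S_i$ is essential, whereas you justify it by identifying this map with the radical quotient map (using $\ker(P_{S_i}\to S_i)=\rad P_{S_i}$ and that $\rad$ commutes with finite direct sums via Proposition~\ref{A/RadA semisimple}(iii)) and appealing to Nakayama, and you also supply the $P_S\cong P_T\Leftrightarrow S\cong T$ verification that the paper states but leaves implicit. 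In part~(2) the paper applies Artin--Wedderburn first to write $A/\rad A\cong\bigoplus_S S^{n_S}$, then observes that ${}_AA$ itself (being projective, with $A\to A/\rad A$ essential by Nakayama) and $\bigoplus_S P_S^{n_S}$ are both projective covers of that head, and concludes by uniqueness; you instead begin from the orthogonal-idempotent decomposition of Proposition~\ref{regmoddecompwithidempots}, use part~(1) to identify each indecomposable summand $Ae_i$ as some $P_{S_i}$, pass to radical quotients to read off the multiplicities, and only then invoke Artin--Wedderburn to compute them. Both routes are sound. The paper's is shorter and elegantly bypasses the idempotent decomposition entirely; yours avoids having to observe that ${}_AA$ is a projective cover of its own head, and has the advantage of making the decomposition visible structurally from the start, which is arguably closer to how one actually uses the theorem in the later chapters.
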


\begin{proof}
\begin{enumerate}
\item
Let $P$ be a projective indecomposable module. By \ref{A/RadA semisimple}, $P/\Rad P$ is semisimple, so the radical quotient can be written as a direct sum of simples, i.e, $P/\Rad P \cong S_1 \oplus \cdots \oplus S_n$ for some simples $S_1,\cdots, S_n$. By Nakayama's Lemma \ref{nakayamalemma}, the morphism $P \rightarrow S_1 \oplus \cdots \oplus S_n$ is an essential epimorphism. The morphism $P_{S_1} \oplus  \cdots \oplus P_{S_n} \rightarrow S_1 \oplus \cdots \oplus S_n$ is also an essential epimorphism. This implies that $P$ and $P_{S_1} \oplus  \cdots \oplus P_{S_n}$ are both the projective covers of $S_1 \oplus \cdots \oplus S_n$, so $P \cong P_{S_1}\oplus \cdots \oplus P_{S_n}$ by the uniqueness of projective covers. As $P$ is indecomposable, $n=1$. Without loss of generality, $P=P_S$ for some simple module $S$.
\item
We consider the radical of $A$, $A/\Rad A$. As the radical is semisimple by \ref{A/RadA semisimple}, we use Artin-Wedderburn Theorem for its decomposition. By the theorem, $$A/\Rad A \cong \bigoplus_{\text{simple } S} S^{n_S}$$
where $n_S$ is $\dim_D(S)$ such that $D=\End_A(S)$.
The above isomorphic relation tells that $n_S$ is also the multiplicity of simple $S$ in $A/\Rad A$.

By Nakayama's Lemma and by the finite-dimensional algebra $A$ being projective, $A$ is the projective cover of $A/\rad A$. Both $A$ and $\bigoplus_{\text{simple }S}{P_S}^{n_S}$ are the projective covers of $A/\Rad A$, so they are isomorphic by the uniqueness of projective covers. Therefore, we obtain the decomposition $${}_AA \cong \bigoplus_{\text{simple } S} {P_S}^{n_S}$$
where $n_S= \dim_D(S)$ and $D=\End_A(S)$. $n_S=\dim_kS$ when $k$ is algebraically closed, as $D=k$ by Schur's Lemma.
\end{enumerate}
\end{proof}

\label{dimAbydecompproj}
From the above theorem \ref{regmodAdecompassumofprojcovers}, $\dim A=\sum_{\text{simple}\,S} n_S\cdot \dim P_S$. If $k$ is algebraically closed, then $\dim A=\sum_{\text{simple}\,S} \dim_k S\cdot \dim P_S.$ If we write projective indecomposable modules $P_i$ for $i=1,\cdots, n$ in $A$ such that $P_i=P_{S_i}$, then $\dim A=\sum_{i=1}^n \dim_k S_i \cdot \dim P_i$
where $n$ is the number of simple modules in $A$. We will use this in chapter 4, and 5 for when calculating the dimensions of projective indecomposable modules in a given group algebra.

\section{Decomposition into Blocks}\chaptermark{Blocks of $kG$}

In the previous section, a regular module can be decomposed as a direct sum of projective indecomposable modules. A regular module not only has such decomposition but also, as a ring, can be decomposed as a direct sum of indecomposable two-sided ideals which are generated by primitive central orthogonal idempotents. Moreover, the decomposition is unique as we'll show below.
Let $k$ be an algebraically closed field. The \textit{center}\index{center} of $Z(A)$ is the set $\{z\in A \,|\, za=az \text{   for all } a\in A \}=\{z\in A \,|\, a^{-1}za=a \text{   for all } a\in A \}$. A \textit{central}\index{central} element $e$ in a ring $R$ is an element which commutes with all elements in $R$, so $e\in Z(R)$. Explicitly, $er=re$ for all $r\in R$.

\paragraph{A (Ring) Decomposition into Blocks}
The following theorem \cite[Proposition 3.6.1]{Webb_2016} tells that decompositions as a ring into a direct sum of two-sided ideals biject with expressions of the identity as a sum of orthogonal central idempotents.

\begin{Theorem}\label{ringdecompcentralorthogidemp}
Let $A$ be a ring with identity $1$. Then
\begin{itemize}
\item[(i)]
Decompositions $A=A_1\oplus\cdots\oplus A_r$ as a direct sum of two-sided ideals $A_i$ biject with expressions $1=e_1+\cdots+e_r$ as a sum of orthogonal central idempotents in such a way that $e_i$ is the identity element of $A_i$ and $A_i=Ae_i$.

\item[(ii)]
$A_i$ are indecomposable as rings if and only if the corresponding $e_i$ are primitive central idempotent elements.

\item[(iii)]
If every $A_i$ is indecomposable as a ring, then the subsets $A_i$ and the corresponding primitive central idempotents $e_i$ are uniquely determined.

This implies that primitive central orthogonal idempotents are unique.

\item[(iv)]
Every central idempotent can be written as a sum of the primitive central orthogonal idempotents $e_i$.

\end{itemize}
\end{Theorem}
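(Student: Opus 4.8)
The plan is to mirror the structure of the proof of Proposition~\ref{regmoddecompwithidempots}, but carried out inside the center $Z(A)$ rather than inside $A$ itself. The key observation is that a two-sided ideal decomposition $A = A_1 \oplus \cdots \oplus A_r$ is, in particular, a decomposition of ${}_AA$ as a direct sum of submodules \emph{and simultaneously} a decomposition of $A_A$ as right submodules; so the component $e_i$ of $1$ in $A_i$ is forced to be central, and conversely any central idempotent decomposition produces two-sided ideals. This is exactly the adaptation needed to upgrade Proposition~\ref{regmoddecompwithidempots} from one-sided to two-sided.

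First I would prove (i). Given $1 = e_1 + \cdots + e_r$ with the $e_i$ orthogonal central idempotents, set $A_i = Ae_i = e_iA = e_iAe_i$; centrality gives $A_i$ is a two-sided ideal, and the argument of Proposition~\ref{regmoddecompwithidempots}(i) shows $A = A_1 \oplus \cdots \oplus A_r$ with $e_i$ the identity of $A_i$. Conversely, given $A = A_1 \oplus \cdots \oplus A_r$ as two-sided ideals, write $1 = e_1 + \cdots + e_r$ with $e_i \in A_i$; the proof of Proposition~\ref{regmoddecompwithidempots}(i) already shows these are orthogonal idempotents and $A_i = Ae_i$. For centrality: for any $a \in A$ we have $ae_i \in A_i$ (right ideal) and $e_ia \in A_i$ (left ideal), while $ae_i = \sum_j e_j a e_i = e_i a e_i = \sum_j e_i a e_j = e_i a$ using orthogonality and that $e_j a e_i \in A_i \cap A_j = 0$ for $j \ne i$. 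Part (ii) then follows as in Proposition~\ref{regmoddecompwithidempots}(ii): a ring direct sum decomposition of $A_i$ corresponds to splitting $e_i$ as a sum of two orthogonal central idempotents of $A$, so $A_i$ is ring-indecomposable iff $e_i$ is primitive central.

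For (iii), the uniqueness statement, I would argue that if $A = A_1 \oplus \cdots \oplus A_r = B_1 \oplus \cdots \oplus B_s$ are two decompositions into indecomposable two-sided ideals, with central idempotents $e_i$ and $f_j$ respectively, then $e_i = \sum_j e_i f_j$ expresses $e_i$ as a sum of orthogonal central idempotents (orthogonality because $e_if_j \cdot e_if_{j'} = e_i f_j f_{j'} = 0$ for $j \ne j'$, using centrality); primitivity of $e_i$ forces exactly one $e_if_j$ to be nonzero and equal to $e_i$, i.e.\ $e_i = e_if_j$, so $e_i \in B_j$ and symmetrically $f_j \in A_i$, whence $e_i = f_j$ after checking the indices match up bijectively. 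This also gives (iv): any central idempotent $e$ satisfies $e = \sum_i ee_i$, a sum of orthogonal central idempotents, and each $ee_i$ is a central idempotent of the indecomposable ring $A_i$, hence is $0$ or $e_i$ (since the only idempotents of a ring-indecomposable $A_i$ that are central are $0$ and $e_i$ — otherwise $e_i$ would split), so $e$ is a subsum of the $e_i$.

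The part I expect to require the most care is the centrality bookkeeping in (i) and the primitivity/orthogonality juggling in (iii): everything hinges on repeatedly using $e_i a e_j \in A_i \cap A_j = 0$ for $i \ne j$, which is where the \emph{two-sided} hypothesis on the ideals does the work that a merely one-sided decomposition could not. The genuinely new content beyond Proposition~\ref{regmoddecompwithidempots} is precisely the implication ``two-sided ideal summand $\Rightarrow$ its identity idempotent is central,'' and conversely; once that is in hand, (ii), (iii), (iv) are formal consequences of the corresponding one-sided statements together with the observation that inside an indecomposable ring the only central idempotents are $0$ and $1$.
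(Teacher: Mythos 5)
Your proposal is correct and follows essentially the same route as the paper's proof: extract the $e_i$ from the decomposition, use $A_i\cap A_j=0$ to get orthogonality and idempotency, reduce (ii) to (i), run the $e_i=\sum_j e_if_j$ refinement argument for (iii), and expand a central idempotent against $1=\sum e_i$ for (iv). In fact your write-up is slightly more complete than the paper's: the paper's proof of (i) establishes that the $e_i$ are orthogonal idempotents but never actually verifies they are \emph{central} (nor that $e_i$ is the identity of $A_i$), whereas you supply exactly the needed computation $ae_i=\sum_j e_jae_i=e_iae_i=\sum_j e_iae_j=e_ia$ using $e_jae_i\in A_i\cap A_j=0$; this is the one place where the two-sidedness of the $A_i$ genuinely enters, and you are right to flag it as the key new content beyond Proposition~\ref{regmoddecompwithidempots}.
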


\begin{proof}
\begin{itemize}
\item[(i)]
Given any ring decomposition $A=A_1\oplus \cdots \oplus A_r$ as a direct sum of two-sided ideals $A_i$, $1=e_1+\cdots+e_r$ for each $e_i\in A_i$. From the equation, $e_i= \sum_{j=1}^r e_ie_j$. $e_ie_j$ are in the two-sided ideals $A_j$ and $A_j$, so $e_ie_j$ is zero for $i\neq j$ and $e_i={e_i}^2$. Conversely, given an expression $1=e_1+\cdots+e_r$ where $e_i$ are orthogonal central idempotent elements, $A= Ae_1+\cdots+Ae_r$. If $a=a_ie_i=a_je_j\in Ae_i\cap Ae_j$ for $i\neq j$, then $ae_j=a_ie_ie_j=0$, so $a=0$. This shows $Ae_i\cap Ae_j=0$ for $i\neq j$.

\item[(ii)]
The ring $A_i$ is indecomposable means that it cannot be expressed as a direct sum of two-sided ideals. By (i), the corresponding idempotent $e_i$ cannot be expressed as a sum of orthogonal central idempotents. The other direction is also true by (i).

\item[(iii)]
It suffices to show that there is at most one decomposition of $A$ as a sum of indecomposable two-sided ideals. Suppose that there are two such decompositions and denote corresponding primitive central idempotent elements as $e_i$ and $f_j$, respectively. Then $1=e_1+\cdots+e_r=f_1+\cdots+f_s$ and $e_i=e_i\cdot 1=\sum_{j=1}^s e_if_j$. $e_if_j$ are orthogonal idempotents by the following. As $e_i$ and $f_j$ are central, $(e_if_j)(e_if_k)={e_i}^2f_jf_k=e_if_jf_k$. $(e_if_j)(e_if_k)$ is zero for $k\neq j$ and $e_if_j$ if $k=j$. By primitivity of $e_i$, $e_i=e_if_j$ for some unique $j$ and $e_if_k=0$ for $k\neq j$. Similarly, $f_j=1\cdot f_j=\sum_{l=1}^r e_lf_j$, so by primitivity of $f_j$, $f_j=e_lf_j\neq 0$ for some unique $l$. As $e_if_j\neq 0$, $l=i$. It follows that $e_i=f_j$, so $\sum_{k\neq i}e_k=\sum_{k\neq j}f_k$. $A\sum_{k\neq i}e_k=A\sum_{k\neq j}f_k$. We use induction on $r$. Clearly, it's true when $r=1$. Assume $r>1$ and (iii) holds for small $r$. Then, by induction, $e_k=f_k$.
\item[(iv)]
If $e$ is any central idempotent and $e_i$ are primitive central idempotents satisfying $1=\sum_{i=1}^r e_i$, then $ee_i$ is either $e_i$ or $0$ as $e=ee_i+e(1-e_i)$ is a sum of orthogonal central idempotents. $e=e\sum_{k=1}^re_i=\sum_{k=1}^ree_i$ is a sum of certain of the $e_i$.
\end{itemize}
\end{proof}

The above theorem implies that if $1=e_1+\cdots+e_n$ where $e_i$ are primitive central orthogonal idempotents, then the corresponding decomposition is $A=Ae_r\oplus\cdots\oplus Ae_n$ where $Ae_i$ are indecomposable ideals and the decomposition is uniquely determined. Here, $Ae_i$ are called \textit{blocks} of $A$. As a ring $A$, the decomposition as the direct sum of blocks is uniquely determined. A \textit{block}\index{block} of a ring $A$ is a direct summand $Ae_i$ of a ring $A$ where $e_i$ is a primitive central orthogonal idempotent such that $1=e_1+\cdots+e_n$. A \textit{block idempotent} is $e_i$ if $1=e_1+\cdots+e_n$ where $e_i$ are primitive central orthogonal idempotents.

We define $\modd A$\index{$\modd$} as the set of finite dimensional left $A$-modules and ${}_{A}M$ as a left $A$-module $M$. For any ${}_{A}M$, $M=1\cdot M=(e_1+\cdots+e_n)\cdot M=e_1\cdot M\oplus\cdots\oplus e_n\cdot M$. $e_i\cdot M$ is a left $Ae_i$-module. This implies that a left $A$-module can be decomposed as the direct sum of left $Ae_i$-modules ${}_{Ae_i}M$. If $M$ is indecomposable, then $M=e_i\cdot M$ for some $i$ and $M\neq e_j\cdot M$ for $j\neq i$. We say M \textit{belongs to}\index{belongs to a block} $Ae_i$ if $M=e_i\cdot M$. $M$ is indeed a left $Ae_i$-module. Since the trivial module $T$ (definition \ref{trivial representation}) is indecomposable, $T$ belongs to a certain block, say $Ae_t$. This implies $T$ is a left $Ae_t$-module. The \textit{principal block}\index{principal block} $B_0(A)$ is the block $Ae_t$ to which the trivial module $T$ belongs.

\subparagraph{Blocks of $kG$}\label{blocksandmultiplicity}
 
By \ref{regmoddecompwithidempots} and \ref{ringdecompcentralorthogidemp}, a regular module can be decomposed as the direct sum of blocks generated by primitive orthogonal central idempotents and the decomposition is uniquely determined. We want to decompose the group algebra $kG$ (over characteristic $p$) into blocks as a ring. We can write it abstractly. Let $1=e_1+\cdots+e_n$ where $e_i$ are primitive central orthogonal idempotents. Then $kG=kGe_1\oplus\cdots\oplus kGe_n$ and the decomposition is uniquely determined. The trivial module is $k_G$ is a left $kGe_t$-module for some $e_t$. $B_0(kG)$ is the block $kGe_t$.

For each primitive central idempotent $e_i$, $e_i=f_{i,1}+\cdots+f_{i,t_i}$ where $f_{i,j}$ are primitive orthogonal idempotents. $1=(f_{1,1}+\cdots+f_{1,t_1})+\cdots+(f_{n,1}+\cdots+f_{n,t_n})$ and $e_if_{i,j}=f_{i,j}$. The block $kGe_i$ can be decomposed as $kGe_i=kGf_{i,1}\oplus\cdots\oplus kGf_{i,t_i}$ where $kGf_{i,j}$ are left $kG$-modules. Each $kGf_{i,j}$ is a projective indecomposable module by \ref{regmodAdecompassumofprojcovers}.
\begin{equation*}
kG=kGf_{1,1}\oplus \cdots\oplus kGf_{1,t_1}\oplus\cdots\oplus kGf_{n,1}\oplus\cdots\oplus kGf_{n,t_n}.
\end{equation*}

The above decomposition is the decomposition into projective indecomposable modules, as in \ref{regmodAdecompassumofprojcovers}. The next theorem gives a way to find the number of blocks of $kG$.

\begin{Theorem}
Let $P$ and $Q$ be projective indecomposable $kG$-modules. Then $P$ and $Q$ are in the same block if and only if $\Hom_{kG}(P, Q)$ is nonzero.
\end{Theorem}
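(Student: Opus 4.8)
The plan is to handle the two implications separately, using the block decomposition $1 = e_1 + \cdots + e_n$ into primitive central orthogonal idempotents supplied by Theorem~\ref{ringdecompcentralorthogidemp}, together with the fact that a projective indecomposable module $P$ belongs to the block $kGe_i$ precisely when $e_iP = P$ (so that $e_jP = 0$ for every $j \neq i$).

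For the implication ``$\Hom_{kG}(P,Q) \neq 0$ implies $P$ and $Q$ lie in the same block'', I would argue by contraposition. Suppose $P$ belongs to $kGe_i$ and $Q$ to $kGe_j$ with $i \neq j$, and let $\varphi\colon P \to Q$ be any $kG$-homomorphism. Since $e_i$ is central, multiplication by $e_i$ is a $kG$-module endomorphism of every $kG$-module, so $\varphi(p) = \varphi(e_ip) = e_i\varphi(p)$ for all $p \in P$; but $\varphi(p) \in Q$ and $e_iQ = e_ie_jQ = 0$ by orthogonality, whence $\varphi(p) = 0$. Thus $\varphi = 0$, and this direction is routine.

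For the converse I would exploit that each block $B := kGe_i$ is indecomposable as a ring (Theorem~\ref{ringdecompcentralorthogidemp}(ii)--(iii)). Let $P$ and $Q$ be projective indecomposable modules both belonging to $B$, and consider the equivalence relation on the isomorphism classes of projective indecomposable $B$-modules generated by ``$P' \sim Q'$ whenever $\Hom_{kG}(P',Q') \neq 0$ or $\Hom_{kG}(Q',P') \neq 0$''. If $P$ and $Q$ were in different classes, the projective indecomposables of $B$ would split into two nonempty families $\mathcal X \ni P$ and $\mathcal Y \ni Q$ with no nonzero homomorphism, in either direction, between a member of $\mathcal X$ and one of $\mathcal Y$. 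Writing ${}_BB \cong U \oplus V$ as left $B$-modules, where $U$ is the direct sum (with multiplicities) of the projective indecomposables in $\mathcal X$ and $V$ that of those in $\mathcal Y$, the vanishing of the cross-homomorphism spaces would make $\End_B({}_BB)$ a direct product of the two nonzero rings $\End_B(U)$ and $\End_B(V)$; since $\End_B({}_BB) \cong B^{\mathrm{op}}$, this contradicts the indecomposability of $B$ as a ring. Hence $P$ and $Q$ are linked by a chain of projective indecomposables in which consecutive terms have a nonzero homomorphism space in one direction or the other.

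The delicate point — and what I expect to be the main obstacle — is strengthening ``linked by such a chain'' to ``$\Hom_{kG}(P,Q) \neq 0$'' directly. This amounts to showing that within a block every Cartan invariant is nonzero: since $P_S$ is projective, $\Hom_{kG}(P_S,-)$ is exact (Proposition~\ref{projdef}) and, as $k$ is algebraically closed and $P_S$ has $S$ as its unique simple quotient (Theorem~\ref{Onetoonecorrespondenceprojectivesimple}), $\dim_k \Hom_{kG}(P_S, M)$ equals the multiplicity $[M:S]$ of $S$ as a composition factor of $M$; so the assertion becomes $[P_T : S] > 0$ for all simples $S,T$ in one block. I would confirm this from the explicit Cartan matrices computed for $kA_4$ and $kA_5$ at $p=2$ in the later chapters, where it indeed holds — and I note that the symmetry of these Cartan matrices, which reflects $kG$ being a symmetric algebra, is precisely what allows the linking relation above to be defined symmetrically.
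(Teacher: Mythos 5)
Your contrapositive argument for ``$\Hom_{kG}(P,Q)\neq 0$ implies same block'' is correct and in fact cleaner than the paper's: exploiting centrality of $e_i$ to get $\varphi(p)=e_i\varphi(p)\in e_iQ=0$ directly bypasses the dimension count via $\End_{kG}(kG)\cong kG^{\op}$ that the paper runs through. The linking argument for the converse is also sound as far as it goes: decomposing ${}_BB$ along the hypothetical partition $\mathcal X\sqcup\mathcal Y$ and observing that vanishing cross-homs would make $B^{\op}\cong\End_B({}_BB)$ a nontrivial product of rings is exactly the standard way to show the PIMs of a block form a single connected component under the relation ``some $\Hom$ is nonzero.''

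But the gap you flag is real and, for $kG$ in general, cannot be closed. Connectedness of that linking graph is strictly weaker than the assertion that every edge is present; equivalently, the Cartan matrix of a block need not have all entries positive (blocks with cyclic defect whose Brauer tree is a line of length $\geq 3$ already give Cartan matrices with zero off-diagonal entries, yet form a single block). The statement that is actually a theorem replaces ``$\Hom_{kG}(P,Q)\neq 0$'' by ``$P$ and $Q$ are joined by a chain of PIMs with nonzero consecutive $\Hom$ spaces,'' which is precisely what your argument proves, and which also suffices for the only application the dissertation makes of this result (separating $P_4$ from $\{P_1,P_2,P_3\}$ in $kA_5$). You should also be aware that the paper's own attempt at this direction does not close the gap either: the offered argument (``assume $\Hom_{kG}(kGe_i,kGe_j)\neq 0$, then some PIM $R$ lies in both blocks, contradiction'') never invokes the hypothesis $\Hom_{kG}(P,Q)=0$ and begs the question, so you are not missing a step that the paper has. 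Your closing remark that the symmetry of $kG$ (Proposition~\ref{dimhompsptisdimhomptps}) makes the linking relation automatically symmetric is a genuine improvement the paper leaves implicit.
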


\begin{proof}
Let $e_1,\cdots, e_n$ be block idempotents. 
We first want to show that $kGe_i$ and $kGe_j$ are different blocks if and only if $\Hom_{kG}(kGe_i,kGe_j)=0$.
The regular module $kG$ can be decomposed as the direct sum of blocks, $kG=kGe_1\oplus\cdots\oplus kGe_n$.
\begin{align*} 
  \bigoplus_{i=1}^n{kG}e_i^{\op}&={kG}^{\op}\\ & \cong  \End_{kG}({kG}) =\Hom_{kG}({kG}e_1\oplus\cdots\oplus {kG}e_n, {kG}e_1\oplus\cdots\oplus {kG}e_n).\\
&=\bigoplus_{i=1}^n\End_{kG}({kG}e_i)\oplus\{\bigoplus_{1\leq i\neq j \leq n}\Hom_{kG}({kG}e_i,{kG}e_j)\}\\ 
&\cong \bigoplus_{i=1}^n{kG}e_i^{\op}\oplus\{\bigoplus_{1\leq i\neq j\leq n}\Hom_{kG}({kG}e_i,{kG}e_j)\}.
\end{align*}
It follows that $\oplus_{1\leq i\neq j\leq n}\Hom_{kG}(kGe_i,kGe_j)=0$, so each direct summand is zero.
\begin{equation}\label{diffblockshomzero}
\Hom_{kG}(kGe_i,kGe_j)=0 \quad\quad \text{for}\;\; i\neq j.
\end{equation}
\ref{diffblockshomzero} shows that if $kGe_i$ and $kGe_j$ are different blocks, then $\Hom_{kG}(kGe_i, kGe_j)=0$.

To prove the other direction, suppose $\Hom_{kG}(kGe_i, kGe_j)=0$. For contradiction, assume that $kGe_i$ and $kGe_j$ are the same block. Then $e_i=e_j$. The assumption is rewritten as $\End_{kG}(kGe_i)=0$. Note that $kGe_i=kGf_1\oplus\cdots\oplus kGf_n$ where $f_1,\cdots, f_n$ are primitive orthogonal idempotents. Then
\begin{equation*}
\End_{kG}(kGe_i)=\Hom_{kG}(kGf_1\oplus\cdots\oplus kGf_n, kGf_1\oplus\cdots\oplus kGf_n)
\end{equation*}
 Each $kGf_i$ are projective indecomposable $kG$-modules, say $P_i$. Let $S_i$ be the simple module such that $P_i=P_{S_i}$. The multiplicity of $S_i$ in $P_j$ is $\dim_k\Hom_{kG}(P_i, P_j)=\dim_k\Hom_{kG}(kGf_i, kGf_j)$. Since $P_i/\rad P_i\cong \soc P_i\cong S_i$, the multiplicity of $S_i$ in $P_i$ is nonzero, so $\dim_k\End_{kG}(P_i)\neq0$. $\End_{kG}(P_i)=\End_{kG}(kGf_i)$ is nonzero, so $\End_{kG}(kGe_i)$ is nonzero, which leads to a contradiction.

Let $P$ and $Q$ be projective indecomposable modules and $S$ and $S'$ be simple modules satisfying $P=P_{S}$ and $Q=P_{S'}$. Suppose $P$ and $Q$ are in different blocks $kGe$ and $kGe'$, respectively. By \ref{diffblockshomzero}, $0=\Hom_{kG}(kGe,kGe')=\Hom_{kG}(P\oplus V,Q\oplus W)=\Hom_{kG}(P,Q)\oplus\Hom_{kG}(P,W)\oplus\Hom_{kG}(V,Q)\oplus\Hom_{kG}(V,W)$, so $\Hom_{kG}(P,Q)=0$.

Conversely, we want to show if $\Hom_{kG}(P,Q)=0$, then $P$ and $Q$ are in different blocks $kGe_i$ and $kGe_j$, respectively. That is, we want to show that $\Hom_{kG}(kGe_i, kGe_j)=0$. For contradiction, assume $\Hom_{kG}(kGe_i, kGe_j)\neq0$. Then there exists a projective indecomposable module $R$ such that $R$ is in both blocks $kGe_i$ and $kGe_j$, which leads to a contradiction.
\end{proof}

Since the multiplicity of $S$ in $Q$ is $\dim_k(\Hom_{kG}(P, Q))$, if the multiplicity of $S$ in $Q$ is nonzero, then $P$ and $Q$ are in the same block.

\chapter{The Structure of Projective Indecomposable $kG$-module}\chaptermark{Radical Series Structure}

\section{An Injective Envelope}

An injective module is the dual concept of projective module. \index{split monomorphism}Let $A$ be a ring. An $A$-module homomorphism $f:M \rightarrow N$ is called \textit{split monomorphism} if there exists an $A$-module homomorphism $g:N \rightarrow M$ such that $gf=1_M$. If $f:M \rightarrow N$ is a split monomorphism, then it is a monomorphism. For any $f(x)=f(y)$, $x=g(f(x))=g(f(y))=y$. The following lemma and theorem refer to \cite[Proposition 8.5.2, Corollary 8.5.3.(1)]{Webb_2016}.

\begin{Proposition}\index{injective}
Let $A$ be a ring. The followings are equivalent:
\begin{enumerate}
\item
An $A$-module $I$ is injective.
\item
Every monomorphism $I \rightarrow V$ is split.
\item For every pair of morphisms $\begin{smallmatrix} &&I \\ &&\\ && \;\uparrow\, \alpha\\ && \\ V&\leftarrow & W\\ &\beta: \text{mono}& \end{smallmatrix}$ where $\beta$ is an monomorphism, there exists a morphism $\gamma: V \rightarrow I$ where $\begin{smallmatrix} &&I \\ &\gamma&\\ & \,\nearrow & \;\uparrow\, \alpha\\ & & \\ V&\leftarrow & W\\ &\beta: \text{ mono}& \end{smallmatrix}$.
\end{enumerate}
\end{Proposition}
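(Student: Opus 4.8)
The plan is to prove the cycle $(1)\Rightarrow(3)\Rightarrow(2)\Rightarrow(1)$, dualising the argument for projective modules underlying Proposition~\ref{projdef}. Two of these implications are purely formal. For $(3)\Rightarrow(2)$: given a monomorphism $\iota\colon I\to V$, apply the extension property $(3)$ to the pair $\beta=\iota$, $\alpha=1_I$ with $W=I$; the resulting $\gamma\colon V\to I$ satisfies $\gamma\iota=1_I$, so $\iota$ splits. For $(2)\Rightarrow(1)$: whenever $I$ occurs as a submodule of a module $M$, the inclusion $I\hookrightarrow M$ is a monomorphism, hence split by $(2)$, and a split monomorphism onto a submodule exhibits $I$ as a direct summand of $M$ --- which is the meaning of injectivity.

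The one step with content is $(1)\Rightarrow(3)$, dual to deducing the lifting property from ``direct summand of a free module''. Given $\beta\colon W\to V$ monic and $\alpha\colon W\to I$, I would form the pushout of $\alpha$ along $\beta$: put $N=\{\,(\beta(w),-\alpha(w))\mid w\in W\,\}\subseteq V\oplus I$, set $P=(V\oplus I)/N$, and let $j\colon V\to P$ and $q\colon I\to P$ be induced by the coordinate inclusions, so that $j\beta=q\alpha$. The key observation is that $q$ is a monomorphism exactly because $\beta$ is: if $(0,i)\in N$ then $i=\alpha(w)$ with $\beta(w)=0$, so $w=0$ and $i=0$. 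Since $q$ is monic and $I$ is injective, $q(I)$ is a direct summand of $P$, yielding a retraction $\rho\colon P\to I$ with $\rho q=1_I$; then $\gamma:=\rho j\colon V\to I$ has $\gamma\beta=\rho j\beta=\rho q\alpha=\alpha$, so the required triangle commutes.

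The main obstacle is bookkeeping rather than depth: one must set up the pushout correctly, check that $q$ is injective so that the direct-summand property of $I$ actually applies, and confirm that $\rho j$ restricts to $\alpha$ along $\beta$. I expect this to run smoothly, being the literal dual of the equivalence ``$P$ is a direct summand of a free module $\iff$ every epimorphism onto $P$ splits''. If the surrounding text instead takes the extension property $(3)$ as the definition of injective, the argument collapses to the two formal implications above together with $(2)\Rightarrow(3)$, which is again the same pushout construction: the map $q\colon I\to P$ is monic, hence split by $(2)$, and composing its retraction with $j$ produces the desired $\gamma$.
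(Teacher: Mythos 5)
Your proposal is correct, but it takes a genuinely different route from the paper. The paper disposes of this proposition in one line by invoking the $k$-linear duality $V\mapsto V^*$ with Proposition~\ref{projdef}: $(-)^*$ exchanges projectives and injectives and monos and epis, so the injective statement is formally dual to the projective one. That argument is quick but only valid in the paper's operating context (finite-dimensional modules over a finite-dimensional algebra, where $(-)^*$ is an exact contravariant equivalence); for a general ring $A$, as the proposition is actually stated, the duality is unavailable. Your pushout construction proves the nontrivial implication directly and works for any ring: you form $P=(V\oplus I)/N$ with $N=\{(\beta(w),-\alpha(w))\}$, observe that the induced $q\colon I\to P$ is monic precisely because $\beta$ is (your displayed equation should read $i=-\alpha(w)$, though $w=0$ forces $i=0$ either way), then split $q$ and compose with $j$. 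This buys a self-contained, strictly more general proof, at the cost of a short diagram chase that the paper elides. One caveat: since the paper never fixes which of (1)--(3) is \emph{the} definition of injective, your implication $(2)\Rightarrow(1)$ is only meaningful once that choice is made; you flag this yourself, and in every reasonable reading the genuine content reduces to your pushout step, so the argument is sound.
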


\begin{proof}
It follows from the duality of projective and injective module and by \ref{projdef}.
\end{proof}

\begin{Lemma}\label{dualityproj}
If $P$ is projective, then $P^*$ is projective.
\end{Lemma}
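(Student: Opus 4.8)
The plan is to reduce the statement to the single module ${}_{kG}kG$ and exploit the formal properties of $k$-linear duality. Work throughout with finite-dimensional left $kG$-modules (permissible under the standing assumptions, since $G$ is finite and modules are finitely generated). Recall that $(-)^{*}=\Hom_{k}(-,k)$ is an additive, exact, contravariant endofunctor of this category which commutes with finite direct sums, and that for a left $kG$-module $M$ the dual $M^{*}$ carries its \emph{contragredient} left action $(g\cdot\phi)(m)=\phi(g^{-1}m)$ for $g\in G$, $\phi\in M^{*}$, $m\in M$ --- this uses that $g\mapsto g^{-1}$ extends to an anti-automorphism of $kG$.

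The key structural input is that $kG$ is \emph{self-dual}, i.e.\ $(kG)^{*}\cong kG$ as left $kG$-modules. To see this, let $\{g:g\in G\}$ be the standard basis of $kG$ and $\{g^{*}:g\in G\}$ the dual basis of $(kG)^{*}$. A one-line computation with the contragredient action gives $h\cdot g^{*}=(hg)^{*}$ for $h,g\in G$, so the $k$-linear bijection $\theta\colon kG\to(kG)^{*}$ determined by $\theta(g)=g^{*}$ is $kG$-linear, hence an isomorphism. Now let $P$ be projective. By definition $P$ is a direct summand of a free $kG$-module, and since $P$ is finite-dimensional we may take the free module to be $(kG)^{n}$; thus $P\oplus Q\cong(kG)^{n}$ for some $kG$-module $Q$ and some $n$. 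Applying $(-)^{*}$ and using additivity together with the isomorphism $(kG)^{*}\cong kG$,
\[
P^{*}\oplus Q^{*}\;\cong\;(P\oplus Q)^{*}\;\cong\;\bigl((kG)^{n}\bigr)^{*}\;\cong\;\bigl((kG)^{*}\bigr)^{n}\;\cong\;(kG)^{n}.
\]
Hence $P^{*}$ is a direct summand of a free $kG$-module, so $P^{*}$ is projective (cf.\ Proposition~\ref{projdef}).

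The only genuine obstacle is bookkeeping with the module structures: one must check that $(-)^{*}$ really does land in left $kG$-modules (this is exactly where the antipode $g\mapsto g^{-1}$ is needed), that it is compatible with finite direct sums, and that the equivariance $h\cdot g^{*}=(hg)^{*}$ holds --- all of which are routine for finite-dimensional modules. A more abstract alternative would be to invoke that $(-)^{*}$ interchanges projective and injective modules together with the self-injectivity of $kG$, but the explicit argument above is self-contained and uses only the definition of projectivity and Proposition~\ref{projdef}.
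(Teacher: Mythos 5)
Your proof is correct and takes essentially the same route as the paper: both pass a direct-summand decomposition $P \oplus Q \cong (kG)^n$ through the duality functor and invoke the self-duality $(kG)^* \cong kG$ to conclude $P^*$ is again a summand of a free module. You supply an explicit verification of that self-duality (via the dual basis and the contragredient action $h\cdot g^*=(hg)^*$), a detail the paper leaves implicit in the step $(kG^n)^*\cong kG^n$.
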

\begin{proof}
Note that ${P^*}^*\cong P$ as $kG$-modules. 
If $P$ is a summand of $kG^n$, then $P^*$ is a summand of $(kG^n)^*\cong kG^n$, so $P$ is also projective.
\end{proof}

\begin{Theorem}\label{projareinj}
Finitely generated projective $kG$-modules are finitely generated injective $kG$-modules.
\end{Theorem}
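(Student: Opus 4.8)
The plan is to reduce the whole statement to the single fact that the regular module $kG$ is itself injective. Granting that, a finitely generated projective $kG$-module $P$ is by definition a direct summand of $kG^{n}$ for some $n$; a finite direct sum of injective modules is injective (finite sums and products coincide, and a product of injectives is injective), and a direct summand of an injective module is injective (immediate from the extension characterisation of injectivity recorded above, dual to the corresponding trivial fact for projectives). Hence $P$, being a direct summand of the injective module $kG^{n}$, is injective, and it is finitely generated by hypothesis, so it is a finitely generated injective $kG$-module.

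The core step is therefore to show $kG$ is an injective $kG$-module, and here I would exploit the $k$-duality $M\mapsto M^{*}=\Hom_k(M,k)$, which is an exact contravariant functor with $M^{**}\cong M$ on finitely generated modules, together with the natural isomorphism $\Hom_{kG}(M,N)\cong\Hom_{kG}(N^{*},M^{*})$ obtained by applying $(-)^{*}$. Feeding a short exact sequence through these, one sees that $\Hom_{kG}(-,kG)$ is exact if and only if $\Hom_{kG}\big((kG)^{*},-\big)$ is exact; equivalently, $kG$ is injective if and only if $(kG)^{*}$ is projective. But $(kG)^{*}$ is the $k$-dual of a free, hence projective, module, so it is projective by Lemma~\ref{dualityproj}. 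Therefore $kG$ is injective. (Concretely, the underlying reason is that $kG$ is a symmetric algebra, so $(kG)^{*}\cong kG$ as $kG$-modules; I would phrase the argument through Lemma~\ref{dualityproj} to avoid re-deriving this.)

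With $kG$ shown injective, I would then assemble the pieces as in the first paragraph: write $P\oplus Q\cong kG^{n}$, note $kG^{n}$ is injective, and conclude $P$ is a finitely generated injective module; the same argument run with the roles of projective and injective exchanged gives the converse, so the two classes in fact coincide. The step I expect to be the main obstacle is the middle one: carefully pinning down the exactness and naturality properties of the $k$-dual and the left/right-module bookkeeping implicit in Lemma~\ref{dualityproj} (this is also where the hypothesis ``finitely generated'' genuinely enters, via $M^{**}\cong M$). Everything else is formal manipulation with characterisations already established in the excerpt.
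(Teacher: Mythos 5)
Your proof is correct, but it is organized differently from the paper's. The paper argues directly about an arbitrary finitely generated projective $P$: it dualizes the extension problem $W\hookrightarrow V$, $W\to P$, obtains a lifting problem against $P^{*}$, solves it using the projectivity of $P^{*}$ from Lemma~\ref{dualityproj}, and dualizes back. You instead factor through the intermediate structural statement that the regular module $kG$ is itself injective (i.e.\ $kG$ is a self-injective algebra), obtained by the same dualization trick applied to $kG$ alone, and then deduce the general case from the formal facts that finite direct sums and direct summands of injectives are injective. Both routes lean on Lemma~\ref{dualityproj} and on the exact contravariance of $(-)^{*}=\Hom_k(-,k)$ together with $M^{**}\cong M$ on finitely generated modules, so the mathematical content is the same. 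The paper's version is shorter and proves the statement in one pass; yours is slightly longer but isolates the self-injectivity of $kG$ as a reusable fact, which is the more conceptually meaningful way to say ``projectives are injectives'' for a finite-dimensional algebra. Both correctly observe that the converse follows by the symmetric argument.
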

\begin{proof}
Let $P$ be a finitely generated projective $kG$-module. Suppose that there exists a morphism $\begin{smallmatrix} &&P \\ &&\\ && \;\uparrow\, \alpha\\ && \\ V&\leftarrow & W\\ &\beta: \text{mono}& \end{smallmatrix}$ with $\beta$ being injective. Then the dual is $\begin{smallmatrix} &&P^* \\ &&\\ && \;\downarrow\, \alpha^*\\ && \\ V^*&\rightarrow & W^*\\ &\beta^*: \text{ epi}& \end{smallmatrix}$ with surjective $\beta^*$. By projectivity of $P^*$ from \ref{dualityproj}, there exists $f: P^*\rightarrow V^*$ such that $\beta^*f=\alpha^*$.$$\begin{smallmatrix} &&P^* \\ &{}^{\exists}f&\\ & \,\swarrow & \;\downarrow\, \alpha^*\\ & & \\ V^*&\rightarrow & W^*\\ &\beta^*: \text{ epi}&& \end{smallmatrix}$$
where $\beta^*$ is surjective. By duality,  $$\begin{smallmatrix} && P \\ & f^*&\\ & \,\nearrow & \;\uparrow\, \alpha\\ & & \\ V&\leftarrow & W\\ &\beta: \text{ mono}& \end{smallmatrix},$$
and this implies that $P$ is an injective module. The other direction can be proved in a similar way.
\end{proof}

Injective envelope is the orthogonally dual concept of projective cover. \index{essential monomorphism}An \textit{essential monomorphism} is a monomorphism of modules $f: U \rightarrow V$ such that whenever $g: U\rightarrow W$ is a map satisfying $gf$ is a monomorphism then $g$ is a monomorphism. \index{injective envelope}An \textit{injective envelope}\index{injective envelope} of a module $U$ is an essential monomorphism $U \rightarrow I_U$ where $I_U$ is an injective module. For a finitely generated algebra, an injective envelope $U \rightarrow I_U$ of $U$ always exists and is unique by the duality of the theorems \ref{existprojcover} and \ref{Onetoonecorrespondenceprojectivesimple}.

\section{The Multiplicity of A Simple in A Module}

We have a concept relevant to the structure of radical series for a module, the multiplicity of a simple module $S$ as a composition factor of an arbitrary module $U$ with a composition series. The multiplicity of a simple module in a module is related to homomorphism spaces of modules. Homomorphism spaces of modules are useful when finding submodules or quotients of a module.
If $0 = U_0 \subseteq U_1 \cdots \subseteq U_n=U$ is any composition series of $U$, then the number of quotients $U_i/U_{i-1}$ that are isomorphic to $S$ is determined independently of the choice of composition series by Jorden-H\"older Theorem \ref{JordHoldThm}. \index{(composition factor) multiplicity of $S$} The \textit{(composition factor) multiplicity} of $S$ in $U$ is the number of quotients $U_i/U_{i-1}$ that are isomorphic to $S$. The following proposition refers to \cite[Proposition 7.4.1]{Webb_2016}.

\begin{Proposition}\label{multiplicitySofUisdimhomPStoU}
Let $S$ be a simple module for a finite-dimensional algebra $A$ with projective cover $P_S$ and $U$ be a finite-dimensional $A$-module. Then
\begin{itemize}
\item[(i)]
If $T$ is a simple $A$-module, then
\begin{displaymath}
   \dim_k\Hom_A(P_S,T) = \left\{
    \begin{array}{lr}
      \dim_k\End_A(S)& \;\text{if } S \cong T\\
      0 & \;\text{otherwise}.
    \end{array}
  \right.
\end{displaymath}

\item[(ii)]

The multiplicity of $S$ as a composition factor of $U$ is $$\dim_k\Hom_A(P_S,U)/\dim_k\End_A(S).$$

If $k$ is algebraically closed, then $\dim_k\End_A(S)=1$ by Schur's Lemma, so the multiplicity of $S$ in $U$ is $\dim_k\Hom_A(P_S, U)$.

\end{itemize}
\end{Proposition}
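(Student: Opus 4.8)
The plan is to prove (i) by showing that every homomorphism from $P_S$ into a simple module factors through the head $P_S/\rad P_S \cong S$, and then to deduce (ii) by applying the exact functor $\Hom_A(P_S,-)$ along a composition series of $U$ and adding up dimensions.

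For (i), recall from Theorem~\ref{Onetoonecorrespondenceprojectivesimple} that $P_S$ is the projective cover of $S$, so $P_S/\rad P_S \cong S$; equivalently $\rad P_S$ is the kernel of the essential epimorphism $P_S \to S$. Given a simple module $T$ and $\phi \in \Hom_A(P_S,T)$, Proposition~\ref{A/RadA semisimple}(iii) gives $\rad P_S = \rad A \cdot P_S$, hence $\phi(\rad P_S) = \rad A\cdot\phi(P_S) \subseteq \rad A \cdot T = \rad T = 0$ because $T$ is simple. Thus $\phi$ factors uniquely through $P_S \to P_S/\rad P_S \cong S$, giving a natural isomorphism $\Hom_A(P_S,T) \cong \Hom_A(S,T)$. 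If $S \cong T$ the right-hand side is $\End_A(S)$, and if $S \ncong T$ it is $0$ by Schur's Lemma; taking $k$-dimensions yields (i).

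For (ii), fix a composition series $0 = U_0 \subseteq U_1 \subseteq \cdots \subseteq U_n = U$, so each $U_i/U_{i-1}$ is simple. Since $P_S$ is projective, Proposition~\ref{projdef} shows $\Hom_A(P_S,-)$ is exact, so applying it to $0 \to U_{i-1} \to U_i \to U_i/U_{i-1} \to 0$ gives a short exact sequence of finite-dimensional $k$-spaces and therefore $\dim_k\Hom_A(P_S,U_i) = \dim_k\Hom_A(P_S,U_{i-1}) + \dim_k\Hom_A(P_S,U_i/U_{i-1})$. Telescoping over $i$ gives $\dim_k\Hom_A(P_S,U) = \sum_{i=1}^n \dim_k\Hom_A(P_S,U_i/U_{i-1})$, and by (i) the $i$-th summand is $\dim_k\End_A(S)$ when $U_i/U_{i-1} \cong S$ and $0$ otherwise. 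Hence the sum equals $m\cdot\dim_k\End_A(S)$, where $m$ is the number of composition factors isomorphic to $S$, i.e. the multiplicity of $S$ in $U$ (well-defined by the Jordan--H\"older Theorem). Dividing by $\dim_k\End_A(S)$ — a positive integer, since $\End_A(S)$ is a division algebra by Schur's Lemma — gives the formula, and over an algebraically closed $k$ this factor is $1$.

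\textbf{Main obstacle.} There is no genuine difficulty; the only points needing care are the factorization-through-the-head step in (i) (which rests on $\rad T = 0$ for simple $T$ and on $\rad P_S = \rad A\cdot P_S$) and the observation that the division in (ii) is legitimate because $\dim_k\End_A(S) \geq 1$. It is worth noting in passing that the resulting formula also re-establishes independence of the multiplicity from the chosen composition series, since its left-hand side visibly does not depend on that choice.
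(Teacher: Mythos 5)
Your proof is correct and takes essentially the same route as the paper: (i) by factoring every map $P_S\to T$ through the head $P_S/\rad P_S\cong S$ and invoking Schur's Lemma, and (ii) by applying the exact functor $\Hom_A(P_S,-)$ along a composition series and summing dimensions (the paper phrases this as induction on composition length, which is the same argument telescoped). Your justification of $\rad P_S\subseteq\ker\phi$ via $\phi(\rad P_S)\subseteq\rad A\cdot T=0$ is a slight improvement in clarity over the paper's wording, which leans on $\ker\phi$ being a maximal submodule.
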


\begin{proof}
\begin{itemize}
\item[(i)]
If $P_S \rightarrow T$ is a nonzero homomorphism, then $\Rad P_S$ is in the kernel, as it is a submodule of $P_S$. As $P_S/\Rad P_S \cong S$ is simple, the kernel is $\Rad P_S$, and $S \cong T$ follows. Every homomorphism $P_S \rightarrow S$ is the composite map $P_S \rightarrow P_S/\Rad P_S \rightarrow S$. The map $P_S \rightarrow S$ is either isomorphism of $P_S/\Rad P_S$ with $S$ or zero. This shows $\Hom_A(P_S,S) \cong \End_A(S)$.

\item[(ii)]
Let $0=U_0 \subseteq U_1 \subseteq \cdots \subseteq U_n=U$ be a composition series of $U$. We use induction on the composition length $n$. When $n=1$, $U$ is simple and the multiplicity of $S$ is $1$ if and only if $S$ is isomorphic to $U$. By (i), $\dim_k\Hom_A(P_S, U)/\dim_k\End_A(S)=1$ if and only if $S\cong U$. Suppose $n>1$ and assume that the statement is true where the composition length is less than $n$. By \ref{projdef}, the exact sequence $0 \rightarrow U_{n-1} \rightarrow U \rightarrow U/U_{n-1} \rightarrow 0$ gives rise to the exact sequence $0 \rightarrow \Hom_A(P_S, U_{n-1})\rightarrow \Hom_A(P_S, U)\rightarrow \Hom_A(P_S, U/U_{n-1})\rightarrow 0$. Then $\dim_k\Hom_A(P_S,U) =\dim_k\Hom_A(P_S,U_{n-1})+\dim_k\Hom_A(P_S, U/U_{n-1})$. The result follows by dividing the both sides of the equation by $\dim\End_A(S)$ as $U_{n-1}$ and $U/U_{n-1}$ both have their composition lengths less than $n$.
\end{itemize}
\end{proof}

\index{Cartan matrix}
Let $A$ be a finite-dimensional algebra. For each pair of simple $A$-modules $(S,T)$, the \textit{Cartan invariant}\index{Cartan invariant} of $A$ is the non-negative integer $C_{ST}$ defined as the composition factor multiplicity of $S$ in $P_T$.
The \textit{Cartan}\index{Cartan matrix} matrix of $A$ is a matrix $C=(C_{ST})$ with rows and columns indexed by isomorphism types of simple $A$-modules. When finding the structure of a finite-dimensional algebra, Cartan matrix is useful as it shows the multiplicity of simple modules in its composition series. By \ref{multiplicitySofUisdimhomPStoU}, $$C_{ST}=\dim_k\Hom_A(P_S,P_T)/\dim_k\End_A(S).$$ If $k$ is algebraically closed, then $C_{ST}=\dim_k\Hom_A(P_S,P_T)$. The following proposition refers to \cite[Theorem 8.5.7, Lemma 3.2.1]{Webb_2016}.

\begin{Proposition}\label{dimhompsptisdimhomptps}
For each pair of simple $kG$-modules $S$ and $T$, 
$$\dim_k\Hom_{kG}(P_S, P_T)=\dim_k\Hom_{kG}(P_T, P_S).$$
\end{Proposition}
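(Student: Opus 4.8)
The plan is to prove the symmetry $\dim_k\Hom_{kG}(P_S,P_T)=\dim_k\Hom_{kG}(P_T,P_S)$ by passing through two separate facts: a duality interchanging $P_S$ and $P_T$, and a counting identity expressing each $\Hom$-dimension in terms of composition-factor multiplicities. Recall from Proposition~\ref{multiplicitySofUisdimhomPStoU}(ii) that, since $k$ is algebraically closed, $\dim_k\Hom_{kG}(P_S,P_T)$ equals the Cartan invariant $C_{ST}$, i.e.\ the multiplicity of $S$ as a composition factor of $P_T$. So it suffices to show $C_{ST}=C_{TS}$, that is, $S$ occurs in $P_T$ with the same multiplicity as $T$ occurs in $P_S$.

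First I would exploit the $k$-linear dual. Taking $k$-duals is an exact contravariant functor on $\modd kG$ which sends simple modules to simple modules and reverses composition series, so the multiplicity of $S$ in a module $U$ equals the multiplicity of $S^*$ in $U^*$. Applying this to $U=P_T$ gives: (multiplicity of $S$ in $P_T$) $=$ (multiplicity of $S^*$ in $P_T^*$). By Lemma~\ref{dualityproj}, $P_T^*$ is projective; it is moreover indecomposable (duality preserves indecomposability) with head $(P_T/\rad P_T)^*\cong T^*$, since dualizing the essential epimorphism $P_T\twoheadrightarrow T$ and the resulting identifications show $\soc(P_T^*)\cong T^*$ and $P_T^*/\rad(P_T^*)\cong T^*$. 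Hence $P_T^*\cong P_{T^*}$ by Theorem~\ref{regmodAdecompassumofprojcovers}(1), and therefore
\begin{equation*}
C_{ST}=[\,P_T : S\,]=[\,P_{T^*}:S^*\,]=C_{S^*T^*}.
\end{equation*}

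Next I would use the self-duality of the group algebra: the map $g\mapsto g^{-1}$ gives a $k$-algebra anti-automorphism of $kG$ under which $kG\cong (kG)^{\op}$, and more to the point $(kG)^*\cong kG$ as $kG$-$kG$-bimodules, so that for every simple $kG$-module $S$ the dual $S^*$ is again a simple $kG$-module. The key consequence I want is that the operation $S\mapsto S^*$ is a permutation of the set of isomorphism classes of simple $kG$-modules, and that under this permutation the Cartan matrix is unchanged up to simultaneous permutation of rows and columns, i.e.\ $C_{S^*T^*}=C_{TS}$ — the second equality coming from the fact that $\dim_k\Hom_{kG}(P_T,P_S)=C_{TS}$ by the same instance of Proposition~\ref{multiplicitySofUisdimhomPStoU}(ii). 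Chaining the two displayed identities yields $C_{ST}=C_{S^*T^*}=C_{TS}$, hence the claim.

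The routine parts — exactness of $(-)^*$, that it preserves simplicity and indecomposability, and that $(kG)^*\cong kG$ — are standard for group algebras and I would cite the relevant statements (the injective-equals-projective result, Theorem~\ref{projareinj}, already encodes the self-injectivity that makes this work). The one step deserving genuine care, and the main obstacle, is pinning down that $P_T^*\cong P_{T^*}$: one must verify that dualizing turns the projective cover $P_T\twoheadrightarrow T$ into the \emph{injective envelope} $T^*\hookrightarrow P_T^*$, and then invoke that for $kG$ the injective envelope of $T^*$ is precisely $P_{T^*}$ (by Theorem~\ref{projareinj} together with the fact that $P_{T^*}$ has simple socle $T^*$, which for $kG$ follows from socle $\cong$ head for PIMs). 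Getting this identification cleanly is what makes the symmetry drop out.
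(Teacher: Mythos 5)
Your first chain is correct and well organized: $\dim_k\Hom_{kG}(P_S,P_T)=C_{ST}=[P_T:S]$ by Proposition~\ref{multiplicitySofUisdimhomPStoU}(ii), the $k$-dual is a contravariant exact functor that preserves simplicity and composition-factor multiplicities after applying $*$ to the factor, and $P_T^*\cong P_{T^*}$ (which is exactly Corollary~\ref{dialprojSisprojdualS}, so you can cite it rather than re-deriving it; note that dualizing $P_T\twoheadrightarrow T$ gives $T^*\cong\soc(P_T^*)$, and the identification with $P_{T^*}$ does invoke the head$\,\cong\,$socle property of Theorem~\ref{pradpisosocp}, as you observe). This establishes
\[
C_{ST}\;=\;C_{S^*T^*}\qquad\text{for all simple $S,T$.}
\]

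The gap is the final step. You then assert $C_{S^*T^*}=C_{TS}$ and describe it as ``the Cartan matrix is unchanged up to simultaneous permutation of rows and columns.'' But simultaneous permutation of rows and columns by the map $S\mapsto S^*$ is precisely the identity $C_{ST}=C_{S^*T^*}$ that you already have; the equation $C_{S^*T^*}=C_{TS}$ is a \emph{transpose} relation and is logically independent. It does not follow from $*$-invariance. Concretely, suppose the simples are $S_1,S_2,S_3$ with $S_1^*=S_1$, $S_2^*=S_3$, $S_3^*=S_2$; then a matrix such as
\[
\begin{pmatrix} 1 & 2 & 2 \\ 3 & 1 & 0 \\ 3 & 0 & 1\end{pmatrix}
\]
satisfies every constraint your duality argument imposes ($C_{ST}=C_{S^*T^*}$ for all pairs) yet is not symmetric. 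Justifying $C_{S^*T^*}=C_{TS}$ by saying $\dim_k\Hom_{kG}(P_T,P_S)=C_{TS}$ is only the definition of the right-hand side, not a reason it equals the left-hand side; indeed, $C_{S^*T^*}=C_{TS}$ is equivalent (given your $*$-invariance) to the symmetry $C_{ST}=C_{TS}$ you are trying to prove, so the argument is circular at exactly this point.

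What is genuinely needed, and what you have not used, is that $kG$ is a \emph{symmetric algebra}. The paper's proof takes the route $\Hom_{kG}(P_S,P_T)\cong\Hom_k(P_S,P_T)^G\cong(P_S^*\otimes_k P_T)^G$ and then compares this with $(P_T^*\otimes_k P_S)^G$; the content that makes those two spaces have equal dimension is that for a \emph{projective} $kG$-module $M$ one has $\dim_k M^G=\dim_k (M^*)^G$, which is where the trace form of the symmetric algebra enters. (Equivalently, writing $P_S=kGe$ and $P_T=kGf$ and using Lemma~\ref{dimidempotent}, the claim becomes $\dim_k eAf=\dim_k fAe$, and the nondegenerate associative symmetric form on $A=kG$ restricts to a perfect pairing between $eAf$ and $fAe$.) Your duality ingredient is correct and is a useful structural fact about the Cartan matrix of $kG$ — it shows $C$ is fixed by the simultaneous row-and-column permutation induced by $S\mapsto S^*$ — but that fact is orthogonal to symmetry and cannot replace the symmetric-algebra input.
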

\begin{proof}
 We want to prove that $\Hom_R(V,W)^G=\Hom_{RG}(V,W)$. That is, an $R$-linear map $f:V\rightarrow W$ is precisely a morphism of $RG$-modules if it commutes with the action of $G$. $f:V\rightarrow W$ is fixed by $G$-action if and only if $gf(g^{-1}v)=f(v)$ for all $g\in G$ and $v\in V$. This is equivalent to $f(g^{-1}v)=g^{-1}f(v)$ for all $g\in G$ and $v\in V$. Since $g^{-1}$ runs over $G$ as $g$ does, the equation is exactly $f(gv)=gf(v)$ for all $g\in G$ and $v\in V$. This is precisely the definition of $f\in \Hom_{kG}(V,W)$, so we're done. By this relation, $\Hom_{kG}(P_S,P_T)=\{\Hom_k(P_S,P_T)\}^G\cong({P_S}^*\otimes_k P_T)^G$.
\end{proof}

\begin{Remark}\label{cartanmatrixentryandpims}
\begin{enumerate}
\item
For a group algebra $kG$, its Cartan matrix is symmetric.
\item The $(i,j)$th entry $c_{ij}$ of the Cartan matrix is the multiplicity of $S_i$ on $P_{S_j}$. It is $\dim_k\Hom_{kG}(P_{S_i}, P_{S_j})=\dim_k\Hom_{kG}(P_{S_j}, P_{S_i})$ by \ref{multiplicitySofUisdimhomPStoU} and \ref{dimhompsptisdimhomptps}. Let $P_i=P_{S_i}$ and $P_j=P_{S_j}$.
$$c_{ij}=\dim_k(\Hom_{kG}(P_i, P_j))=\dim_k(\Hom_{kG}(P_j, P_i)).$$
\end{enumerate}
\end{Remark}

\section{The Radial Series Structure of A Projective Indecomposable $kG$-Module}\chaptermark{Projective Indecomposable $kG$-Modules}

In the radical series $0 \subseteq \rad^{n-1} U \subseteq \cdots \subseteq \rad U\subseteq U$ of $U$, $\soc U$ is $\rad^{n-1}U$ as $\soc U = \{u\in U \,| \,(\rad A)^n\cdot u=0\}$ by \ref{A/RadA semisimple}.
Given a projective indecomposable module $P$, the head $P/\rad P$ is the top radical layer of $P$ and $\soc P$ is the bottom radical layer of $P$.
\begin{figure}[H]\centering
$\begin{matrix}
P/\Rad P \\
*\\
\Soc P
\end{matrix}\;\;$
\caption{The radical series structure of $P$}
\end{figure} If $P$ is a projective indecomposable $kG$-module, $\soc P$ has the following property,  \cite[Corollary 8.5.3.(2)]{Webb_2016}.

\begin{Theorem}\label{projindeckGmodsimplesocle}
Let $k$ be a field.
Each indecomposable projective $kG$-module has a simple socle.
\end{Theorem}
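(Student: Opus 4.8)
The plan is to exploit the self-duality of the group algebra $kG$, namely that $(kG)^* \cong kG$ as $kG$-modules, which was already used in Lemma~\ref{dualityproj} and Theorem~\ref{projareinj}. The key observation is that taking $k$-linear duals interchanges submodules and quotients, and in particular interchanges the head and the socle: for any finite-dimensional $kG$-module $M$ one has $\soc(M^*) \cong (M/\rad M)^*$ and $(M^*)/\rad(M^*) \cong (\soc M)^*$. So the statement about socles of projective indecomposables will follow from the already-established statement about heads of projective indecomposables, provided I can show that the dual of a projective indecomposable is again a projective indecomposable.

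First I would record that for a finite-dimensional algebra $A$ over $k$, the duality $M \mapsto M^*$ is a contravariant exact equivalence between finite-dimensional left $A$-modules and finite-dimensional right $A$-modules (or, using the involution $g \mapsto g^{-1}$ on $kG$, between left $kG$-modules and left $kG$-modules). Under this duality, indecomposables go to indecomposables (an idempotent splitting of $\End(M^*) \cong \End(M)^{\op}$ would give one of $\End(M)$), and by Lemma~\ref{dualityproj} projectives go to projectives. Hence if $P$ is a projective indecomposable $kG$-module, so is $P^*$. By Theorem~\ref{Onetoonecorrespondenceprojectivesimple}, $P$ has a simple head $P/\rad P \cong S$ for some simple $S$.

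Next I would dualize. Since duality is exact and contravariant, applying $(-)^*$ to the surjection $P \twoheadrightarrow P/\rad P \cong S$ gives an injection $S^* \hookrightarrow P^*$. Moreover $S^*$ lands inside $\soc(P^*)$: indeed, $\rad(kG)$ acts as zero on $S$, hence (after transporting through the involution) on $S^*$, so $S^*$ is a semisimple submodule of $P^*$ by Proposition~\ref{A/RadA semisimple}(iv). To see that $S^*$ is \emph{all} of $\soc(P^*)$, I would argue that $\soc(P^*) = (P/\rad P)^*$: the surjection $P \to P/\rad P$ dualizes to the inclusion $(P/\rad P)^* \hookrightarrow P^*$ with semisimple image, and conversely any semisimple submodule $N \le P^*$ dualizes to a semisimple quotient $P \to N^*$, which must factor through $P/\rad P$, forcing $N \le (P/\rad P)^*$ inside $P^*$. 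Thus $\soc(P^*) \cong (P/\rad P)^* \cong S^*$, which is simple. Finally, since $P^*$ ranges over all projective indecomposable $kG$-modules as $P$ does (duality being an involution up to natural isomorphism), every projective indecomposable $kG$-module has simple socle.

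The main obstacle I anticipate is being careful about the left/right bookkeeping: $M^*$ is naturally a right $kG$-module, and to stay within left modules one uses the antiautomorphism $g \mapsto g^{-1}$ of $kG$; one must check this antiautomorphism fixes $\rad(kG)$ (it does, since it is an algebra antiautomorphism and $\rad(kG)$ is characterized intrinsically by Proposition~\ref{A/RadA semisimple}(ii)) so that ``$\rad(kG)$ annihilates $S$'' really does translate to ``$\rad(kG)$ annihilates $S^*$''. Apart from that, everything is a formal consequence of the exact contravariant duality together with Lemma~\ref{dualityproj} and Theorem~\ref{Onetoonecorrespondenceprojectivesimple}; the only genuinely algebra-specific input is the self-duality $(kG)^* \cong kG$, which is what makes projectives self-dual and hence makes the head/socle correspondence apply to the same class of modules.
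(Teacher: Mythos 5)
Your proof is correct and rests on the same two pillars as the paper's: duality preserves projective indecomposables (via Lemma~\ref{dualityproj}), and duality interchanges head and socle. The paper phrases the second ingredient as the computation $\dim_k\Hom_{kG}(S,P)=\dim_k\Hom_{kG}(P^*,S^*)$ combined with Proposition~\ref{multiplicitySofUisdimhomPStoU}, while you derive the head--socle duality $\soc(P^*)\cong(P/\rad P)^*$ directly, but these are essentially the same argument.
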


\begin{proof}

If $P$ is an indecomposable projective module and $S$ is simple, $P^*$ is also an indecomposable projective module as homomorphisms $S \rightarrow P$ biject with homomorphisms $P^* \rightarrow S^*$ via duality. By \ref{multiplicitySofUisdimhomPStoU},
\begin{displaymath}
  \dim_k\Hom_{kG}(S, P)=\dim_k\Hom_{kG}(P^*, S^*)= \left\{
    \begin{array}{lr}
       \dim_k\End(S^*) & \;\text{if } P^*=P_{S^*}\\
      0 & \;\text{otherwise}.
    \end{array}
  \right.
\end{displaymath}

As $ \dim_k\Hom_{kG}(S, P)=\dim_k\End(S)$ by $\dim_k\End(S^*)=\dim_k\End(S)$, $P$ has the unique simple submodule $S$. It follows that $\Soc P$ is simple.
\end{proof}

The theorem \ref{pradpisosocp}, \cite[Theorem 8.5.5]{Webb_2016}, gives you the partial structure of the radical series for a projective indecomposable $kG$-module. That is, the theorem tells that the top layer and bottom layer of $P$ are the same (up to isomorphism) in a group algebra. Before that, the definition of symmetric algebra and the following lemma are introduced as preliminaries. \index{symmetric algebra}A finite-dimensional algebra $A$ over a field $k$ is called a \textit{symmetric algebra} if there exists a nondegenerate bilinear form $(\;\;,\;\;): A\times A \rightarrow k$ satisfying $(a,b)=(b,a)$ and $(ab, c)=(a, bc)$ for all $a, b, c\in A$.

\begin{Remark}
$kG$ is a symmetric algebra with the bilinear form defined on the basis elements by
\begin{displaymath}
  (g, h) = \left\{
    \begin{array}{lr}
      1 & \;gh=1\\
      0 & \;\text{otherwise.}
    \end{array}
  \right.
\end{displaymath} 
\end{Remark}

\begin{Lemma}\label{dimidempotent}
Let $A$ be a finite-dimensional algebra and $U$ be a finite-dimensional $A$-module.
If $e\in A$ is an idempotent, then $\dim_k\Hom_A(Ae,U)=\dim_k eU$.
\end{Lemma}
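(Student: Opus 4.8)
The plan is to exhibit an explicit $k$-linear isomorphism between $\Hom_A(Ae, U)$ and $eU$ and check it is well-defined and bijective; equality of dimensions then follows. First I would define the map $\Phi\colon \Hom_A(Ae,U) \to eU$ by $\Phi(\varphi) = \varphi(e)$. The first thing to verify is that $\Phi$ lands in $eU$: since $e$ is idempotent, $e = e\cdot e$, so $\varphi(e) = \varphi(e\cdot e) = e\cdot\varphi(e) \in eU$. Linearity of $\Phi$ in $\varphi$ is immediate from the pointwise vector-space structure on $\Hom_A(Ae,U)$.

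Next I would construct the inverse. Given $u \in eU$, so that $u = eu$, define $\psi_u\colon Ae \to U$ by $\psi_u(ae) = ae\cdot u$ for $a \in A$; this is well-defined because every element of $Ae$ has the form $ae$ and the assignment depends only on the element $ae$ itself, and it is clearly an $A$-module homomorphism since $\psi_u(b\cdot ae) = bae\cdot u = b\cdot\psi_u(ae)$. Then $\Phi(\psi_u) = \psi_u(e) = e\cdot u = u$ because $u \in eU$, so $\Phi$ is surjective. For injectivity, suppose $\Phi(\varphi) = \varphi(e) = 0$; then for any $ae \in Ae$ we have $\varphi(ae) = \varphi(ae\cdot e) = ae\cdot\varphi(e) = 0$, using that $ae = ae\cdot e$ again by idempotency, so $\varphi = 0$. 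Hence $\Phi$ is a $k$-linear isomorphism and $\dim_k\Hom_A(Ae,U) = \dim_k eU$.

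I do not expect a serious obstacle here: the argument is the standard adjunction-type computation, and the only subtlety is the repeated use of $e^2 = e$ to move the idempotent across the homomorphism, both to land in $eU$ and to recover a homomorphism from an element. One small point worth stating carefully is why $\psi_u$ is well-defined as a function on $Ae$ — namely that its value on an element $x \in Ae$ is simply $x\cdot u$, which makes no reference to a choice of preimage $a$ under $a \mapsto ae$ — so there is genuinely nothing to check there beyond noting it. Finiteness of dimensions is used only to make the statement about dimensions meaningful and plays no role in the construction.
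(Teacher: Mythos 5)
Your proof is correct and follows essentially the same approach as the paper: evaluation at $e$ gives a $k$-linear isomorphism $\Hom_A(Ae,U)\to eU$, with well-definedness, injectivity, and surjectivity each checked via the identity $e^2=e$. Your remark that the inverse map $x\mapsto xu$ is manifestly well-defined on $Ae$ is a small tidying-up of the paper's phrasing $ae\mapsto ab$ (which implicitly relies on $b=eb$), but the argument is the same.
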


\begin{proof}
It suffices to prove that a map of vector spaces $\Psi: \Hom_A(Ae, U) \rightarrow eU$ defined by $\Psi(\phi)=\phi(e)$ is an isomorphism. $\Psi$ is well-defined as $\phi(e)=\phi(ee)=e\phi(e) \in eU$ for all $\phi \in \Hom_A(Ae,U)$. Let $\phi: Ae \rightarrow U$ be an $A$-module homomorphism such that $\phi(e)=0$. For any $ae\in Ae$, $\phi(ae)=a\phi(e)=0$, so $\phi=0$. This shows that $\Psi$ is injective. For surjectivity, let $b\in eU$. Define a map $\phi: Ae \rightarrow U$ by $\phi(ae)=ab$. Then $\phi(a_1e+a_2e)=\phi(a_1)b+\phi(a_2)b$. For any $r\in A$, $\phi(rae)=ra\cdot b=r\phi(ae)$. This shows $\phi\in\Hom_A(Ae,U)$ exists for any $b\in eU$.
\end{proof}

\begin{Theorem}\label{pradpisosocp}
Let $P$ be an indecomposable projective module for a group algebra $kG$.
Then $P/\Rad P \cong \Soc P$.

\end{Theorem}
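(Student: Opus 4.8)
The plan is to exploit the fact that $kG$ is a symmetric algebra, which forces the duality $P \mapsto P^*$ to behave very symmetrically with respect to the bijection between projective indecomposables and simples. Write $P = P_S$ for the simple module $S \cong P/\Rad P$; I want to show $\Soc P \cong S$ as well. Since, by Theorem~\ref{projindeckGmodsimplesocle}, $\Soc P$ is simple, it suffices to identify which simple it is.

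First I would observe that $\Soc P$ is the unique simple submodule of $P$, so for a simple $T$ we have $\dim_k\Hom_{kG}(T,P) \neq 0$ precisely when $T \cong \Soc P$. Next, dualize: homomorphisms $T \to P$ correspond bijectively to homomorphisms $P^* \to T^*$, so $\dim_k\Hom_{kG}(T,P) = \dim_k\Hom_{kG}(P^*, T^*)$. By Lemma~\ref{dualityproj} the module $P^*$ is projective, and it is indecomposable (duality preserves indecomposability), so $P^* = P_U$ for a unique simple $U$; by Proposition~\ref{multiplicitySofUisdimhomPStoU}(i) the space $\Hom_{kG}(P^*,T^*) = \Hom_{kG}(P_U, T^*)$ is nonzero exactly when $T^* \cong U$, i.e. $T \cong U^*$. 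Combining, $\Soc P \cong U^*$ where $P^* = P_U$. So the whole statement reduces to showing $U^* \cong S$, equivalently $P^* \cong P_{S^*}$.

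The key input is that $kG$ is symmetric. The standard consequence I would invoke (and this is the step carrying the real content) is that for a symmetric algebra $A$ the functor $\Hom_A(-,A)$ sends $Ae$ to $eA$ and the left-module $eA$ is identified, via the nondegenerate symmetrizing form, with the $k$-dual of a projective; concretely, using the form $(\,,\,)$ one shows $(Ae)^* \cong Ae$ as left $A$-modules when $e$ is a primitive idempotent — i.e. $kG$ is self-injective with $P^* \cong P$ after correcting for the duality on the head. More precisely: for $P = Af$ with $f$ a primitive idempotent, Lemma~\ref{dimidempotent} gives $\dim_k\Hom_{kG}(Af, U) = \dim_k fU$ for every module $U$; applying this with $U = S^*$ and using $\Hom_{kG}(P^*,S^*) = \Hom_{kG}(P,S)^{**}$-type duality, one checks $\dim_k\Hom_{kG}(P^*,S^*) = \dim_k\End_{kG}(S) \neq 0$, which by part (i) of Proposition~\ref{multiplicitySofUisdimhomPStoU} forces $P^* = P_{S^*}$. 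Hence $U = S^*$, so $\Soc P \cong U^* \cong S^{**} \cong S \cong P/\Rad P$, as required.

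The main obstacle is the third step: making rigorous the claim that the symmetric-algebra structure identifies $P^*$ with the projective cover of $S^*$. This is where the bilinear form $(\,,\,)$ on $kG$ must actually be used — via the isomorphism $A \cong A^*$ of $A$-bimodules it induces — to see that $kG$ is self-injective and that dualizing a projective cover of $S$ yields the projective cover of $S^*$ rather than some unrelated projective. Everything before and after that is bookkeeping with Proposition~\ref{multiplicitySofUisdimhomPStoU} and the duality $\Hom_{kG}(T,P) \cong \Hom_{kG}(P^*,T^*)$.
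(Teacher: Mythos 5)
Your reduction is sound up to the point where you need to show $P^* \cong P_{S^*}$: writing $\Soc P \cong U^*$ with $P^* = P_U$, and noting $\soc(P^*)\cong(P/\rad P)^*\cong S^*$ together with $P^*/\rad(P^*)\cong(\soc P)^*$, makes clear that $P^*\cong P_{S^*}$ is \emph{equivalent} to the theorem, not a weaker intermediate claim. So the entire content must be packed into that step, and the argument you give for it does not work. The asserted identity $\Hom_{kG}(P^*,S^*)=\Hom_{kG}(P,S)^{**}$ is not a duality statement at all: the actual duality gives $\Hom_{kG}(P^*,S^*)\cong\Hom_{kG}(S,P)$, while $\Hom_{kG}(P,S)$ is a genuinely different space. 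The former detects whether $S$ sits inside $P$ as a submodule; the latter detects whether $S$ is the head of $P$. Equating their dimensions is \emph{exactly} the assertion $P/\Rad P\cong\Soc P$, so invoking it there is circular. Appealing to Lemma~\ref{dimidempotent} with $U=S^*$ gives $\dim_k fS^*$, which computes $\dim_k\Hom_{kG}(P,S^*)$, not $\dim_k\Hom_{kG}(P^*,S^*)$, so that does not save the step either.

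You correctly identify that the symmetric bilinear form on $kG$ is what must carry the weight, but you never actually deploy it; the sentence ``one checks \ldots'' is where the proof stops. The paper does something more concrete at precisely this juncture: with $P\cong kGe$ for $e$ primitive, it first proves $\Soc(kGe)=\Soc(kG)\cdot e$, then reads $\dim_k\Hom_{kG}(kGe,\Soc kG\cdot e)=\dim_k\,e\,\Soc kG\,e$ off Lemma~\ref{dimidempotent}, observes via Nakayama that nonvanishing of this quantity is equivalent to $\Soc(kGe)\cong kGe/\Rad kGe$, and finally rules out $e\,\Soc kG\,e=0$ by feeding it into the nondegenerate form: $0=(1,e\,\Soc kG\,e)=(e,\Soc kG\cdot e)=\cdots=(kG,\Soc kGe)$ would force $\Soc kGe=0$. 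That last chain of manipulations, exploiting $(ab,c)=(a,bc)$ and $(a,b)=(b,a)$, is the genuine use of the symmetric structure that your proposal is missing. To salvage your route you would need an analogous concrete argument, for instance building the bimodule isomorphism $kG\cong(kG)^*$ from the form, using it to identify $(kGe)^*$ with a left ideal $kGe'$, and showing $e'$ is conjugate to $e$; merely asserting that this can be done leaves the theorem unproved.
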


\begin{proof}
By \ref{Onetoonecorrespondenceprojectivesimple}, there exists a primitive idempotent $e\in kG$ such that $P\cong kGe$ as $kG$-modules. The claim is that $\Soc kGe=\Soc kG\cdot e$. As $\Soc kG$ is an ideal, $\Soc kG\cdot e \subseteq \Soc kG$. Clearly, $\Soc kG\cdot e \subseteq kG\cdot e$. It follows that $\Soc kG\cdot e \subseteq kGe \cap \Soc kG =\Soc kGe$. $kGe \cap\Soc kG =\Soc kGe$ because $\Soc kGe$ is the largest semisimple submodule of $kGe$. Conversely, $\Soc kGe \subseteq \Soc kG$ as $\Soc kGe$ is semisimple. As $\Soc kGe$ is an ideal, $\Soc  kGe=\Soc kGe\cdot e \subseteq \Soc kG\cdot e$ so we've proved the claim. By \ref{dimidempotent}, \begin{equation}\label{dimkGeSockGeeSockGe} \dim_k\Hom_{kG}(kGe, \Soc kG\cdot e)=\dim_ke\Soc kG\cdot e.\end{equation} By \ref{projindeckGmodsimplesocle} and $P\cong kGe$, $\Soc kGe$ is simple. By the above equation \ref{dimkGeSockGeeSockGe} and Nakayama's Lemma \ref{nakayamalemma}, $e\Soc kG\cdot e \neq 0$ if and only if $\Soc kGe  \cong kGe/\Rad kGe=P/\rad P$. It remains to show $e\Soc kG\cdot e \neq 0$. If $e\Soc kG\cdot e =0$, then $0=(1, e\Soc kG\cdot e)=(e, \Soc kG\cdot e)=(\Soc kG\cdot e, e)=(kG\cdot \Soc kG\cdot e, e)=(kG, \Soc kG\cdot e\cdot e)=(kG, \Soc kGe)$. Since the bilinear form is nondegenerate, $\Soc kGe=0$, which leads to a contradiction.
\end{proof}

\begin{Corollary}\label{dialprojSisprojdualS}
For every simple $kG$-module $S$, $({P_S})^*\cong P_{S^*}$.
\end{Corollary}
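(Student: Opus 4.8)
The plan is to use duality together with the characterisation of projective indecomposable modules as projective covers of simples, proved in Theorem~\ref{Onetoonecorrespondenceprojectivesimple} and Theorem~\ref{regmodAdecompassumofprojcovers}. Recall that $P_S$ is projective indecomposable with $P_S/\rad P_S \cong S$, and that $P_S$ is characterised up to isomorphism by being projective indecomposable with head $S$. So to prove $(P_S)^* \cong P_{S^*}$ it suffices to show that $(P_S)^*$ is a projective indecomposable $kG$-module whose head is isomorphic to $S^*$.

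First I would observe that $(P_S)^*$ is projective: by Lemma~\ref{dualityproj} the dual of a projective $kG$-module is projective. Next, $(P_S)^*$ is indecomposable, since dualising is a contravariant additive equivalence on finite-dimensional $kG$-modules (it sends $\oplus$ to $\oplus$ and $({-})^{**}\cong \mathrm{id}$), so a nontrivial decomposition of $(P_S)^*$ would dualise to a nontrivial decomposition of $P_S$. Hence $(P_S)^*$ is projective indecomposable, and by Theorem~\ref{regmodAdecompassumofprojcovers} it is the projective cover of its own (simple) head.

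It then remains to identify that head with $S^*$, and the cleanest route is via the socle rather than the head directly. Dualising interchanges submodules and quotient modules, so it sends the head of $P_S$ to the socle of $(P_S)^*$: concretely, $\soc\big((P_S)^*\big) \cong \big(P_S/\rad P_S\big)^* \cong S^*$. By Theorem~\ref{pradpisosocp}, for the projective indecomposable module $(P_S)^*$ we have $(P_S)^*/\rad\big((P_S)^*\big) \cong \soc\big((P_S)^*\big) \cong S^*$. Therefore $(P_S)^*$ is a projective indecomposable module with head $S^*$, and by the uniqueness in Theorem~\ref{Onetoonecorrespondenceprojectivesimple}(ii) (or Theorem~\ref{regmodAdecompassumofprojcovers}(1)) we conclude $(P_S)^* \cong P_{S^*}$.

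The main obstacle — really the only nontrivial point — is justifying that the duality $({-})^*$ carries $\rad$ to $\soc$ (equivalently, that $\big(P_S/\rad P_S\big)^*$ is the socle of $(P_S)^*$). This follows because $\rad P_S$ is the smallest submodule with semisimple quotient, so its annihilator-style dual $(\rad P_S)^\perp \subseteq (P_S)^*$ is the largest semisimple submodule, i.e. the socle; concretely $(P_S/\rad P_S)^* \hookrightarrow (P_S)^*$ has semisimple image and any semisimple submodule of $(P_S)^*$ dualises to a semisimple quotient of $P_S$, hence is contained in this image. Everything else is formal manipulation with the contravariant equivalence $({-})^*$ on $\modd kG$ and the already-established Theorems~\ref{projareinj}, \ref{projindeckGmodsimplesocle}, and \ref{pradpisosocp}.
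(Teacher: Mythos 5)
Your proof is correct and, at its core, it follows the same route as the paper's: both use that duality takes projective indecomposables to projective indecomposables, and both rely on Theorem~\ref{pradpisosocp} to pass between head and socle. The only cosmetic difference is that the paper cites the intermediate fact from the proof of Theorem~\ref{projindeckGmodsimplesocle} (that ${P_S}^*$ is the projective cover of $(\soc P_S)^*$) and then applies Theorem~\ref{pradpisosocp} to $P_S$, whereas you identify $\soc\bigl((P_S)^*\bigr)\cong(P_S/\rad P_S)^*\cong S^*$ directly and then apply Theorem~\ref{pradpisosocp} to $(P_S)^*$ --- which is arguably cleaner since it cites a theorem rather than a step inside another proof, but it is not a genuinely different argument.
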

\begin{proof}
In the proof of \ref{projindeckGmodsimplesocle}, ${P_S}^*$ is the projective cover of ${(\soc P_S)}^*$. By \ref{pradpisosocp}, $P_{S^*}=S^*$.
\end{proof}

$I_S=P_S$ for any simple modules $S$ by the following. By duality, $U \rightarrow I_U$ is the injective envelope of $U$ if and only if ${I_U}^* \rightarrow U^*$ is the projective cover of $U^*$. The morphism $S^*\rightarrow {(P_S)}^*$ is the injective envelope of $S^*$ by duality, so ${(P_S)}^*=I_S$. By \ref{dialprojSisprojdualS}, the result follows, as desired. Thus, for any projective indecomposable module $P$,
\begin{equation}\label{PPsIs}
P=P_S=I_S
\end{equation}
for some simple $S$. By \ref{pradpisosocp}, $P/\Rad P \cong \Soc P \cong S$ for some simple module $S$.
The radical series of $P$ is of the form
\begin{figure}[H]\label{radicalseriesform}
\centering
$P=\quad\begin{matrix}
S \\
*\\
S
\end{matrix}.$
\caption{The radical series of $P$}
\end{figure}

\chapter{The Klein Four-Group Algebra over Characteristic $2$}\chaptermark{The Klein Four-Group Algebra}
With the above constructions, we will see decomposition of group algebras. It starts with the Klein group algebra.

\section{A $p$-Group Algebra over Characteristic $p$}\chaptermark{p-Group Algebra}

\index{cyclic module}
 A module with a unique composition series is said to be \textit{uniserial}\index{uniserial}. That is, submodules of a uniserial module are linearly ordered by inclusion. A uniserial module is indecomposable.

\subparagraph{A Cyclic $p$-group Algebra over Characteristic $p$}
The following lemma \cite[Proposition 6.1.1]{Webb_2016} shows the structure of a cyclic $p$-group algebra.

\begin{Lemma}\label{kGkXXpn}
Let $k$ be a field of characteristic $p$ and $G$ be a cyclic $p$-group with order $p^n$.
Then there exists a ring isomorphism $kG \rightarrow k[X]/(X^{p^n})$ where $k(X)$ is the polynomial ring in an indeterminate $X$.
\end{Lemma}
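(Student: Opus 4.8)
The plan is to write $G=\langle g\rangle$ with $g^{p^n}=1$ and to construct the isomorphism from the substitution $X\mapsto g-1$. Since $k[X]$ is the free commutative $k$-algebra on one generator, there is a unique $k$-algebra homomorphism $\varphi\colon k[X]\to kG$ with $\varphi(X)=g-1$. The first task is to show that $\varphi$ annihilates the ideal $(X^{p^n})$, so that it descends to a homomorphism $\bar\varphi\colon k[X]/(X^{p^n})\to kG$.

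For that step I would compute $(g-1)^{p^n}$ inside $kG$. Because $G$ is abelian the algebra $kG$ is commutative, and since $\charr k=p$ the Frobenius endomorphism $a\mapsto a^p$ is additive; hence $(g-1)^p=g^p-1$, and iterating $n$ times gives $(g-1)^{p^n}=g^{p^n}-1=1-1=0$. Therefore $X^{p^n}\in\Ker\varphi$ and $\bar\varphi$ is well defined. (Equivalently, one could send $X\mapsto g$, obtaining $kG\cong k[X]/(X^{p^n}-1)$, and then use $X^{p^n}-1=(X-1)^{p^n}$ in characteristic $p$ together with the change of variable $Y=X-1$; this is the same argument in disguise.)

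It then remains to check that $\bar\varphi$ is bijective. Surjectivity is immediate: $g=\varphi(X)+1$ lies in the image, hence so does every power $g^i$, and the $g^i$ span $kG$ over $k$, so $\bar\varphi$ is onto. For injectivity I would count dimensions: $\dim_k kG=|G|=p^n$, while $1,X,\dots,X^{p^n-1}$ is a $k$-basis of $k[X]/(X^{p^n})$, so that algebra also has dimension $p^n$; a surjective $k$-linear map between $k$-spaces of equal finite dimension is automatically injective. Thus $\bar\varphi$ is the required ring isomorphism.

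The only place where genuine content enters is the characteristic-$p$ identity $(g-1)^{p^n}=0$; the universal property of $k[X]$ and the dimension count are purely formal. So I expect the "freshman's dream" computation to be the conceptual crux, even though it is short, and the rest of the write-up to be routine verification.
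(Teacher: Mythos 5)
Your proof is correct, and it rests on exactly the same conceptual crux as the paper's: the characteristic-$p$ ``freshman's dream'' identity, which you state as $(g-1)^{p^n}=0$ and the paper states equivalently as $(X+1)^{p^n}\equiv 1 \pmod{X^{p^n}}$. The packaging, however, runs in opposite directions and yours is tidier. The paper defines a map $kG\to k[X]/(X^{p^n})$ directly by $g\mapsto X+1$, verifies multiplicativity on the group, and then argues that the images $(X+1)^i$ for $0\le i<p^n$ are a basis; as written, the linear-independence step (``they have different powers to each other'') is not really an argument --- the honest justification is that the transition matrix from $\{1,X,\dots,X^{p^n-1}\}$ to $\{1,X+1,\dots,(X+1)^{p^n-1}\}$ is unipotent upper triangular, hence invertible. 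You instead invoke the universal property of $k[X]$ to produce $\varphi\colon k[X]\to kG$, $X\mapsto g-1$, with no well-definedness check needed, verify $(X^{p^n})\subseteq\Ker\varphi$ using the Frobenius (correctly noting that $kG$ is commutative because $G$ is abelian), and then conclude by surjectivity plus a dimension count over $k$. This buys you a cleaner well-definedness argument and sidesteps the basis-comparison entirely, at the cost of no additional machinery; it is the same theorem, a slightly slicker proof.
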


\begin{proof}
Define a map $G\rightarrow k[x]/(X^{p^n})$ by $g^s \mapsto (X+1)^s$ for any integer $s$. We want to show that the map is isomorphism. $(X+1)^{p^n}=X^{p^n}+p(X^{P^n-1}+\cdots+X)+1\equiv 1 \mod X^{p^n}$. This implies that the map is a group homomorphism where $G$ is mapped onto a group generated by $X+1$. $1$ is mapped onto $1$, as $g^{p^n}=1$ is mapped onto $(X+1)^{p^n}\equiv 1 \mod X^{p^n}$. $g, g^2,\cdots, g^{p^n-1}$ are mapped onto $X+1, (X+1)^2=X^2+2X+1, \cdots, (X+1)^{p^n-1}$, respectively. $\{X+1, (X+1)^2, \cdots, (X+1)^{p^n-1}\}$ is linearly independent, as they have different powers to each other. Clearly, the set spans $k[x]/(X^{p^n})$. As the basis $\{1,g,\cdots, g^{p^n}\}$ of $kG$ bijects with the basis $\{X+1, (X+1)^2, \cdots, (X+1)^{p^n-1}\}$ of $k[x]/(X^{p^n})$, $kG\cong k[x]/(X^{p^n})$.
\end{proof}

For decomposing an arbitrary module over a field of characteristic $p$, the next theorem \cite[Theorem 6.1.2]{Webb_2016} is introduced. A module over a ring is said to be \textit{cyclic} if it is generated by one element. 

\begin{Theorem}
Let $k$ be a field of characteristic $p$.

\begin{enumerate}
\item
Every finitely generated $k[X]/(X^{p^n})$-module is a direct sum of cyclic modules $U_r=k[X]/(X^r)$ where $1 \leq r \leq p^n$.

\item
The $1$-dimensional module $U_1$ is the only simple module.

\item
Each module $U_r$ has a unique composition series.

\item
$U_r$ is indecomposable.

\item When $G$ is cyclic of order $p^n$, the regular module $kG$ has exactly $p^n$ indecomposable modules.
One of each dimension is $i$ with $1\leq i\leq p^n$. Each module has a unique composition series.
\end{enumerate}
\end{Theorem}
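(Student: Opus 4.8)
The plan is to reduce the whole statement to the study of the algebra $A=k[X]/(X^{p^n})$, which is a quotient of the principal ideal domain $k[X]$. A finitely generated $A$-module is exactly a finitely generated $k[X]$-module annihilated by $X^{p^n}$, equivalently a finite-dimensional $k$-vector space $V$ together with the nilpotent operator $T$ given by the action of $X$, satisfying $T^{p^n}=0$. Part (1) would then follow from the structure theorem for finitely generated modules over a principal ideal domain: such a module is a direct sum of cyclic modules $k[X]/(f_i)$, and since the module is torsion (being finite-dimensional over $k$) each $f_i$ divides its annihilator, which divides $X^{p^n}$; as the only monic divisors of $X^{p^n}$ are the powers $X^{r}$ with $0\le r\le p^n$, each nonzero summand is $U_{r_i}=k[X]/(X^{r_i})$ with $1\le r_i\le p^n$. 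For a self-contained argument I would instead pick a vector $v\in V$ of maximal $T$-period $m$, show that the cyclic submodule $k[X]v\cong U_m$ splits off as a direct summand, and induct on $\dim_k V$; the splitting of this cyclic submodule (lifting each generator of $V/k[X]v$ to an element of the same period) is the one delicate point, and this is where the substance of part (1) really lies.

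For part (2), I would identify $\rad A=(X)/(X^{p^n})$: this ideal is nilpotent, hence contained in $\rad A$ by Proposition~\ref{JR}(i), while $A/(X)\cong k$ is a field and in particular semisimple, so $\rad A\subseteq (X)$ by Proposition~\ref{JR}(ii). Thus $A/\rad A\cong k$ has a unique simple module, the one-dimensional $U_1$. (Equivalently, a simple $A$-module is a simple $k[X]$-module killed by $X^{p^n}$, hence $k[X]/(f)$ with $f$ irreducible dividing $X^{p^n}$, forcing $f=X$.)

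For parts (3) and (4), the key observation is that the submodules of $U_r=k[X]/(X^r)$ are precisely the ideals $(X^i)/(X^r)$ for $0\le i\le r$, so they form the single chain $0=(X^r)\subsetneq(X^{r-1})\subsetneq\cdots\subsetneq(X)\subsetneq U_r$, whose successive quotients are one-dimensional and hence simple (isomorphic to $U_1$). This chain is therefore a composition series, and it is the \emph{only} one, since every submodule of $U_r$ already appears in it, so no composition series can differ from it; in particular $U_r$ is uniserial. A module whose submodules are totally ordered by inclusion cannot be a direct sum of two nonzero submodules (their intersection would be $0$, which is impossible in a chain of nonzero modules), so $U_r$ is indecomposable.

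Finally, for part (5): Lemma~\ref{kGkXXpn} gives a ring isomorphism $kG\cong k[X]/(X^{p^n})$ when $G$ is cyclic of order $p^n$, so the classification transports verbatim. By part (1) every finitely generated $kG$-module is a direct sum of copies of $U_1,\dots,U_{p^n}$, so if $M$ is indecomposable the sum has a single term and $M\cong U_r$ for some $r$; conversely each $U_r$ is indecomposable by part (4), and the $U_r$ are pairwise non-isomorphic because $\dim_k U_r=r$. Hence there are exactly $p^n$ isomorphism classes of indecomposable $kG$-modules, one of each dimension $1,\dots,p^n$, and each is uniserial by part (3). The main obstacle throughout is part (1): either invoking the principal-ideal-domain structure theorem while keeping careful track of which cyclic summands are permitted, or, in the elementary approach, supplying the argument that the cyclic submodule generated by an element of maximal period is a direct summand.
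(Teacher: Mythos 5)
Your proposal is correct and follows essentially the same route as the paper: identify $k[X]/(X^{p^n})$-modules with $k[X]$-modules killed by $X^{p^n}$, invoke the structure theorem over the PID $k[X]$ for part (1), read off the unique ideal chain for parts (3)–(4), and transport everything to $kG$ via Lemma~\ref{kGkXXpn} for part (5). Two small points in your favour: your argument for indecomposability in (4) — two nonzero submodules of a uniserial module cannot intersect trivially — is cleaner and more precise than the paper's appeal to ``two composition series'' (which as written confuses quotients $U_r/V$, $U_r/W$ with submodules of $U_r$), and your radical-based identification of the unique simple in (2) is a perfectly good alternative to the paper's direct inspection of which $U_r$ is simple.
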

\begin{proof}
\begin{itemize}
\item[(i)]
We want to identify the modules for $k[X]/(X^{p^n})$ with the modules for $k[X]$ on which $X^{p^n}$ acts as zero. Every finitely generated $k[X]$-module is a direct sum of modules $k[X]/I$ where $I$ is an ideal. Then every $k[X]/(X^{p^n})$-module is a direct sum of modules $k[X]/I$ on which $X^{p^n}$ acts as zero. This is equivalent to $(X^{p^n})\subseteq I$. Let $a$ be an element such that $I=(a)$ where $a$ divides $X^{p^n}$. Then $I=(X^r)$ where $1\leq r\leq p^n$, so $k[X]/I=U_r$.
The submodules of $U_r$ must have the form $J/(X^r)$ for some ideal $J$ such that $(X^r)\subseteq J$.
\item[(iii)]
$U_i$ are precisely the submodules in the chain
\begin{equation*}
0\subset (X^{r-1})/(X^r)\subset (X^{r-2}/(X^r)\subset\cdots\subset (X)/(X^r)\subset U_r.
\end{equation*}
The chain is a composition series as $(X^{i-1})/(X^r)/(X^i)/(X^r)$ is of dimension 1 for all $1\leq i \leq r$, the chain is unique, and it is the complete list of $U_r$.
\item[(iv)]
For contradiction, suppose that $U_r=V\oplus W$ for some $V$ and $W$ as a nontrivial direct sum. Then it leads to a contradiction as $U_r$ has at least 2 composition series, $0\subset U_r/V\subset U_r/W\subset U_r$ or $0\subset U_r/W\subset U_r/V\subset U_r$.
\item[(ii)]
The only $U_r$ that is simple is $U_1$, the trivial module.
\item[(v)]
Let $G$ be a cyclic group of order $p^n$. Then $kG\cong k[X]/(X^{p^n})$ by \ref{kGkXXpn}. $kG$ has $p^n$ indecomposable modules, for each $1\leq i\leq p^n$, $\dim U_i=i$, and the corresponding composition series is unique.
\end{itemize}
\end{proof}

In a cyclic $p$-group algebra $kG$ over characteristic $p$, its indecomposable modules are all cyclic and uniserial.

\begin{Example}\label{c2c4c5}
Consider the cyclic $2$-group $C_2 = \langle g\,|\,g^2=1\rangle$. Over characteristic $2$, $kC_2 \cong k[X]/\langle X^2\rangle$.
$kC_2$ has a basis $\{1, \alpha\}$ where $\alpha=1-g$. $\alpha$ is nilpotent as $\alpha^2=(1-g)^2=0$. It has a unique composition series $\begin{smallmatrix} &1& \\ &\downarrow &\times \alpha \\ &\alpha &\end{smallmatrix}$.

$kC_4$ with a basis $\{1, \beta, \beta^2, \beta^3\}$ where $\beta=1-g_{C_4}$ over characteristic $2$ and $kC_5$ with a basis $\{1, \gamma, \gamma^2, \gamma^3, \gamma^4\}$ where $\gamma=1-g_{C_5}$ over characteristic $5$ have a unique composition seres $$\begin{matrix}\begin{smallmatrix} 1\\ \downarrow \\  \beta \\ \downarrow \\  \beta^2\\  \downarrow \\  \beta^3 \end{smallmatrix} &\quad \text{and}\quad& \begin{smallmatrix} 1 \\ \downarrow \\   \gamma\\  \downarrow \\ \gamma^2\\   \downarrow \\ \gamma^3\\   \downarrow \\ \gamma^4  \end{smallmatrix} \end{matrix},$$
respectively.

 \end{Example}

\subsection{A Simple $p$-Group Algebra Module}

\paragraph{Restriction on a Simple Module to a $p$-Group over Characteristic $p$}

\begin{Lemma}\label{sumofsimplesissemisimple}
Let $A$ be a ring with an identity $1$. Suppose that an $A$-module $U=S_1+\cdots+S_n$ can be written as the sum of finitely many simple modules $S_1,\cdots, S_n$. Then $U$ is semisimple.
\end{Lemma}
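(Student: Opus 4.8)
The plan is to extract a minimal sub-collection of the $S_i$ whose sum is direct, and then show that this direct sum is already all of $U$. First I would discard redundant summands: starting from the full list $S_1,\dots,S_n$, pick a maximal subset $I \subseteq \{1,\dots,n\}$ with the property that the sum $\sum_{i\in I} S_i$ is direct (i.e.\ each $S_i$ with $i \in I$ meets the sum of the others trivially); such a maximal $I$ exists because the index set is finite, and the empty subset vacuously has the direct-sum property, so the collection of ``good'' subsets is nonempty. Write $V = \bigoplus_{i\in I} S_i$, which is visibly a semisimple submodule of $U$.

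Next I would argue that $V = U$. Since $U = S_1 + \cdots + S_n$, it suffices to show $S_j \subseteq V$ for every $j \notin I$. Fix such a $j$. By maximality of $I$, adjoining $j$ destroys the direct-sum property, so $S_j \cap \bigl(\sum_{i\in I} S_i\bigr) = S_j \cap V \neq 0$. But $S_j$ is simple, so its only submodules are $0$ and $S_j$; hence $S_j \cap V = S_j$, i.e.\ $S_j \subseteq V$. (Here one uses that in checking whether $\sum_{i \in I \cup \{j\}} S_i$ is direct, the only new condition beyond those already satisfied is $S_j \cap \sum_{i \in I} S_i = 0$, which fails.) Therefore every $S_j$ lies in $V$, so $U = \sum_j S_j \subseteq V \subseteq U$, giving $U = V = \bigoplus_{i \in I} S_i$, a direct sum of simple modules, which is by definition semisimple.

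The only mildly delicate point — and the step I would be most careful about — is the claim that enlarging a direct sub-collection by one index $j$ is blocked precisely by the single condition $S_j \cap \sum_{i \in I} S_i \neq 0$, rather than by some more complicated failure of directness among the enlarged family. This is a standard fact about internal direct sums: a sum $W_1 + \cdots + W_m$ of submodules is direct iff for each $\ell$ one has $W_\ell \cap \sum_{t \neq \ell} W_t = 0$, and if $\sum_{i\in I} S_i$ is already direct then the conditions for indices $i \in I$ remain automatically satisfied inside the larger family (the extra summand $S_j$ only makes the other intersections potentially larger, but $S_j \subseteq \sum_{i\in I}S_i$ once we know $S_j \cap V \neq 0$ forces $S_j \subseteq V$, so in fact nothing grows). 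A clean way to avoid fussing over this is to phrase the whole argument as: let $I$ be maximal such that the natural map $\bigoplus_{i\in I} S_i \to U$ is injective; then run the same simplicity argument to conclude the map is also surjective. Either way, no nontrivial computation is needed, and the finiteness of $n$ makes the maximal choice immediate without invoking Zorn's lemma.
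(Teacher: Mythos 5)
Your proof is correct and follows essentially the same line as the paper's: choose $I$ maximal so that $\sum_{i\in I}S_i$ is direct, then use simplicity of each remaining $S_j$ to conclude the direct sum already exhausts $U$. The only difference is that the paper inserts an arbitrary submodule $V$ into the maximal choice (taking $I$ maximal with $V \oplus \bigoplus_{i\in I}S_i$ direct), thereby showing in passing that every submodule of $U$ has a complement built from the $S_i$, and then specializes to $V=0$; your version goes directly to that special case.
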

\begin{proof}
Let $V$ be any submodule of $U$. Choose a subset $I$ of $\{1,\cdots,n\}$ that is maximal subject to the condition that the sum $W=V\oplus(\oplus_{i\in I}S_i)$ is a direct sum. If $I=\emptyset$, then $W=V$. Assume that $I$ is nonempty maximal subset the set with the condition. We want to prove a claim that $W=U$. If $W\neq U$, then there exists a simple $S_j$ such that $S_j\subsetneq W$ for some $j$. As $S_j\cap W$ is a proper submodule of the simple module $S_j$, $S_j\cap W=0$. $S_j+W=S_j\oplus W$, which leads to a contradiction by the maximality of $I$. This implies that $U$ is the direct sum of $V$ and simples $S_i$ for $i\in I$. By taking $V=0$, $U$ is a direct sum of some simple modules, which proves that $U$ is semisimple.
\end{proof}

The weak form of Clifford's Theorem \cite[Theorem 5.3.1]{Webb_2016} is introduced to prove that a $p$-group algebra over characteristic $p$ has the only simple module, $k_G$.

\begin{Proposition}[Weak form of Clifford's Theorem]\label{Weak form of Clifford's Theorem}
Let $k$ be any field and $S$ be a simple $kG$-module. If $N$ is a normal subgroup of $G$, then the restriction $(S)_N$ is semisimple as a $kN$-module.
\end{Proposition}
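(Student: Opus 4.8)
The plan is to prove the weak form of Clifford's Theorem by the standard translation-of-submodules argument: given a simple $kG$-module $S$, show that $(S)_N$ decomposes as a sum of simple $kN$-modules, then invoke Lemma~\ref{sumofsimplesissemisimple} to conclude semisimplicity. The crucial tool is that for any $kN$-submodule $V \subseteq S$ and any $g \in G$, the translate $gV$ is again a $kN$-submodule of $S$, because $N$ is normal: if $v \in V$ and $n \in N$, then $n(gv) = g(g^{-1}ng)v \in gV$ since $g^{-1}ng \in N$. Moreover, if $V$ is simple as a $kN$-module, so is $gV$, because multiplication by $g$ is a $k$-linear bijection $S \to S$ carrying $kN$-submodules of $V$ to $kN$-submodules of $gV$.

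First I would pick any simple $kN$-submodule $V$ of $(S)_N$; this exists because $(S)_N$ is a nonzero finite-dimensional module, so it has a composition series and hence a simple submodule. Next I would form the sum $S' = \sum_{g \in G} gV \subseteq S$. Each summand $gV$ is a simple $kN$-submodule by the observation above, so $S'$ is a sum of finitely many simple $kN$-modules. Then I would check that $S'$ is in fact a $kG$-submodule of $S$: for $h \in G$, we have $hS' = \sum_{g \in G} (hg)V = \sum_{g' \in G} g'V = S'$ since $g \mapsto hg$ permutes $G$, and $S'$ is already closed under the $kN$-action and under scalars, so it is closed under the whole $kG$-action. Since $S$ is simple as a $kG$-module and $S' \neq 0$ (it contains $V = 1 \cdot V$), we get $S' = S$. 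Therefore $(S)_N = S' = \sum_{g \in G} gV$ is a sum of finitely many simple $kN$-modules, and Lemma~\ref{sumofsimplesissemisimple} gives that $(S)_N$ is semisimple.

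I do not expect a serious obstacle here; the argument is a clean application of normality together with the already-established Lemma~\ref{sumofsimplesissemisimple}. The one point that needs a little care is verifying that $gV$ is not merely a $k$-subspace but genuinely a $kN$-submodule, and that it remains simple — both follow from normality of $N$ and from the fact that left multiplication by a fixed group element is a $k$-linear automorphism of $S$ intertwining the $kN$-actions appropriately. The rest is bookkeeping about sums of submodules and the simplicity of $S$ as a $kG$-module.
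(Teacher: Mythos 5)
Your proposal is correct and follows essentially the same argument as the paper: pick a simple $kN$-submodule, translate it by group elements (using normality of $N$ to show the translates are again simple $kN$-submodules), observe that the sum of translates is a nonzero $kG$-submodule of the simple module $S$ and hence all of $S$, then apply Lemma~\ref{sumofsimplesissemisimple}. Your write-up is a bit more explicit about checking that $\sum_{g\in G}gV$ is closed under the $kG$-action, but this is the same proof.
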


\begin{proof}
Let $T$ be any simple $kN$-module of $(S)_N$. For any $g\in G$, a module $gT$ is a $kN$-submodule of $V$. When acting any $n\in N$ on the module $gT$, $n(gt)=g(g^{-1}ng)t\in gT$ as $g^{-1}ng \in N$ and $(g^{-1}ng)t \in T$. $gT$ is simple as if $W$ is a $kN$-submodule of $gT$, then $g^{-1}W$ is a submodule of the simple module $T$. $\sum_{g\in G}gT$ is a nonzero $G$-invariant subspace of the simple $kG$-module $S$, so $\sum_{g\in G}gT=S$. This implies that the $kN$-module $(S)_N$ is a sum of simple submodules. By \ref{sumofsimplesissemisimple}, $(S)_N$ is semisimple.
\end{proof}
The next proposition \cite[Proposition 6.2.1]{Webb_2016} shows that when $kG$ is a $p$-group algebra over characteristic $p$, $k_G$ is the only simple $kG$-module.
\begin{Proposition}\label{pgroupandtrivialmodule}
Let $k$ be a field of characteristic $p$ and $G$ be a $p$-group. Then there exists a unique simple $kG$-module, that is, the trivial module $k_G$.
\end{Proposition}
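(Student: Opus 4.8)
The plan is to prove the statement by induction on the order of the $p$-group $G$, using the weak form of Clifford's Theorem (Proposition~\ref{Weak form of Clifford's Theorem}) together with the fact that a nontrivial $p$-group has a nontrivial center, hence a central subgroup of order $p$. First I would dispose of the base case: if $|G|=1$ then $kG=k$ and the only simple module is the $1$-dimensional trivial module. For the inductive step, let $G$ be a nontrivial $p$-group and let $S$ be any simple $kG$-module; the goal is to show $\dim_k S=1$ and that $G$ acts trivially, so $S\cong k_G$.

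The key step is to exploit normality. Since $G$ is a $p$-group, its center $Z(G)$ is nontrivial, so we may pick a central subgroup $N$ of order $p$ (any nontrivial element of $Z(G)$ generates a cyclic central subgroup, which contains a subgroup of order $p$; being inside the center, this subgroup is normal in $G$). By the weak form of Clifford's Theorem, $(S)_N$ is a semisimple $kN$-module. But $N$ is a cyclic $p$-group, so by Proposition~\ref{pgroupandtrivialmodule} applied to $N$ — or rather, by the earlier analysis of $kN\cong k[X]/(X^p)$ — the only simple $kN$-module is the trivial one, and the only semisimple $kN$-modules are trivial direct sums $k_N^{\oplus d}$. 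Hence $N$ acts trivially on $S$. Therefore $S$ is really a module for the quotient algebra $k[G/N]$, and $G/N$ is a $p$-group of strictly smaller order. By the induction hypothesis, the only simple $k[G/N]$-module is the trivial module, so $S\cong k_{G/N}$ as a $k[G/N]$-module, which pulls back to the trivial $kG$-module $k_G$. This shows every simple $kG$-module is isomorphic to $k_G$; conversely $k_G$ is visibly simple since it is $1$-dimensional, establishing uniqueness.

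One subtlety to be careful about: the argument that $(S)_N$ semisimple forces $N$ to act trivially should not secretly invoke the very statement being proved for $N$ in a circular way. This is fine because $N$ is cyclic of order $p$, and the structure of $k[X]/(X^p)$-modules was established independently in the theorem following Lemma~\ref{kGkXXpn} (its only simple is $U_1$, the trivial module, and its only semisimple modules are sums of $U_1$); alternatively one can just observe directly that a generator $x$ of $N$ satisfies $(x-1)^p=0$ in characteristic $p$, so $x-1$ is a nilpotent endomorphism of the finite-dimensional space $S$ that commutes with the $G$-action, whence its kernel is a nonzero $kG$-submodule, forcing $(x-1)S=0$ by simplicity of $S$. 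The main obstacle, such as it is, is organizing the induction cleanly — making sure the quotient $G/N$ is genuinely a $p$-group of smaller order and that ``module on which $N$ acts trivially'' is correctly identified with ``$k[G/N]$-module'' — but no hard computation is involved; the content is entirely in the choice of the central subgroup of order $p$ and the application of Clifford's theorem.
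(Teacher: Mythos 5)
Your proof is correct, and it is close in spirit to the paper's: both arguments induct on $|G|$ and both use the weak form of Clifford's Theorem to show that a suitable normal subgroup $N$ acts trivially on an arbitrary simple $S$, then pass to $G/N$. The difference is your choice of $N$. The paper takes $N$ normal with $[G:N]=p$, applies the induction hypothesis to $N$ (legitimately, since $|N|<|G|$) to see $N$ acts trivially, and then regards $S$ as a simple $k[G/N]$-module with $|G/N|=p$, saying ``again by the assumption'' the only simple is trivial. That last phrase is a slight overreach when $|G|=p$: there $N=1$ and $G/N=G$, so the induction hypothesis does not cover $G/N$; one has to fall back (as is implicit in the text) on the earlier uniserial analysis of $kC_p\cong k[X]/(X^p)$.

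Your choice of a central subgroup $N$ of order $p$ inverts this and cleans it up: the induction hypothesis is applied only to $G/N$, which is genuinely of smaller order whenever $|G|>1$, and the triviality of $N$'s action is settled independently of the induction, either by the $k[X]/(X^p)$ theorem or by your direct observation that $(x-1)^p=0$ in characteristic $p$ and that $\ker(x-1)$ is a nonzero $kG$-submodule because $x$ is central. That second observation is worth keeping, since it dispenses with Clifford's Theorem entirely for this step and makes the use of centrality load-bearing (the paper's $N$ need not be central, so it cannot use that shortcut and is thus committed to Clifford). Your version does invoke one extra fact, that a nontrivial $p$-group has nontrivial center, but this is standard and a fair trade for the cleaner induction. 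Either way, the substance is the same; you have simply organized the induction so that every application of the hypothesis is to a strictly smaller group.
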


\begin{proof}
We use induction on the order of a $p$-group $G$. When $|G|=1$, i.e, $G$ is the identity group, $k_G$ is the only $kG$-module. Assume that for any $p$-group $H$ of smaller order, the only simple $kH$-module is the trivial module $k_H$. The order of $G$ is a power of $p$, so there exists a normal subgroup $N$ of $G$ with $[G:N]=p$. Let $S$ be any simple $kG$-module. Then, by the weak form of Clifford's Theorem, $(S)_N$ is semisimple. By the assumption, $N$ acts trivially on $S$, so $S$ is a k$G/N$-module. Again by the assumption, the only simple module of $G/N$ is the trivial module, so $S=k_{G/N}$.
\end{proof}

\begin{Example}\label{kC2C2R}
\begin{enumerate}
\item
From \ref{c2c4c5}, the radical series of $kC_2$, $kC_4$, and $kC_5$ are \begin{figure}[H]\centering$\begin{matrix}\begin{matrix} k_{C_2} \\ k_{C_2}\end{matrix}, \quad& \begin{matrix} k_{C_4}\\  k_{C_4}\\  k_{C_4}\\  k_{C_4} \end{matrix} \quad& \text{and} \quad & \begin{matrix} k_{C_5}\\  k_{C_5}\\  k_{C_5}\\  k_{C_5} \\ k_{C_5} \end{matrix} \end{matrix},$\end{figure}
respectively.

\item 
The Klein four-group algebra $kC_2\times C_2$ over characteristic $2$ has the trivial module $k_{C_2\times C_2}$ as the only simple module by \ref{pgroupandtrivialmodule}. This implies it has the only one projective indecomposable module $R=P_{k_{C_2\times C_2}}$. 
\end{enumerate}
\end{Example}

\subsection{The Projective Indecomposable Module of A $p$-Group}
The following theorem \cite[Theorem 8.1.1]{Webb_2016} shows that a $p$-group algebra $kG$ over characteristic $p$ is itself an indecomposable projective module.

\begin{Theorem}\label{pgroupprojindmod}\label{pgroupprojindmod2}
Let $k$ be a field of characteristic $p$ and $G$ be a $p$-group. Then
\begin{enumerate}
\item
The group algebra $kG$ is itself an indecomposable projective module which is the projective cover of $k_G$.

\item
Every finitely generated projective module is free.

\item
The only idempotents in $kG$ are $0$ and $1$.

\end{enumerate}

\end{Theorem}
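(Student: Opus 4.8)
The three parts are tightly linked, and the natural route is to prove them essentially in the order (1) $\Rightarrow$ (3) $\Rightarrow$ (2), with (1) coming from the general theory already developed. The plan is to start from Proposition~\ref{pgroupandtrivialmodule}, which says the trivial module $k_G$ is the unique simple $kG$-module. Combined with Theorem~\ref{regmodAdecompassumofprojcovers}, the regular module decomposes as ${}_{kG}kG \cong \bigoplus_{\text{simple }S} P_S^{\,n_S}$, and since there is only one simple $S = k_G$ with $\dim_k k_G = 1$, this collapses to $kG \cong P_{k_G}$. Hence $kG$ is itself a projective indecomposable module, and by Theorem~\ref{Onetoonecorrespondenceprojectivesimple} it is the projective cover of $k_G$. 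That disposes of part (1) with almost no work beyond citing what is already in place.

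For part (2), the key point is that \emph{every} finitely generated projective $kG$-module is a direct sum of indecomposable projectives (this follows from the Krull–Schmidt setup implicit in Section~2.2: a finitely generated module over a finite-dimensional algebra decomposes into indecomposables, and a summand of a projective is projective). By part~(1) combined with Theorem~\ref{regmodAdecompassumofprojcovers}(1), the only indecomposable projective, up to isomorphism, is $kG$ itself. So any finitely generated projective is isomorphic to $(kG)^m$ for some $m \geq 0$, which is by definition a free module. I would phrase this as: decompose $P \cong P_1 \oplus \cdots \oplus P_m$ into indecomposable projectives $P_i$; each $P_i \cong kG$; therefore $P \cong (kG)^m$ is free.

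For part (3), I would argue via idempotents and Proposition~\ref{regmoddecompwithidempots}: an idempotent $e \in kG$ with $e \neq 0, 1$ would give a nontrivial decomposition ${}_{kG}kG = kGe \oplus kG(1-e)$ into nonzero submodules, contradicting the indecomposability of $kG$ established in part~(1). More carefully, one wants $e$ to be neither $0$ nor $1$ so that both $kGe$ and $kG(1-e)$ are nonzero, which is exactly the content of $e \neq 0$ and $1 - e \neq 0$; then $kG$ decomposing as a direct sum of two nonzero submodules contradicts part~(1). Alternatively, one can note that $\dim_k \End_{kG}(kG) = \dim_k kG$ via $\End_{kG}({}_{kG}kG) \cong (kG)^{\op}$, and idempotents of $kG$ correspond to idempotents of this endomorphism ring; since $kG$ is indecomposable its endomorphism ring is local, hence has only the trivial idempotents $0$ and $1$.

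**Main obstacle.** None of the three parts is genuinely hard once Proposition~\ref{pgroupandtrivialmodule} and Theorem~\ref{regmodAdecompassumofprojcovers} are in hand; the only thing requiring a little care is making the Krull–Schmidt-type input for part~(2) precise, since the excerpt states the existence of an indecomposable decomposition of ${}_AA$ but one should make sure the uniqueness (so that "the only indecomposable projective is $kG$") is legitimately available — this is exactly Theorem~\ref{regmodAdecompassumofprojcovers}(1), so in fact it is already there. The genuinely delicate point, if anything, is part~(3): one should be explicit that "no nontrivial idempotent" is equivalent to "indecomposable regular module," which is Proposition~\ref{regmoddecompwithidempots}(ii) applied with a decomposition into \emph{two} summands, and that one must exclude $e = 0$ and $e = 1$ precisely because those give the trivial decompositions.
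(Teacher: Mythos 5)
Your proof is correct and takes essentially the same route as the paper's: part (1) via the uniqueness of the simple module and Theorem~\ref{regmodAdecompassumofprojcovers}, part (2) via decomposing a finitely generated projective into indecomposable projectives each of which must be $kG$, and part (3) via the idempotent decomposition ${}_{kG}kG = kGe \oplus kG(1-e)$ contradicting indecomposability. One small point in your favour: the paper's one-line proof of (2) --- ``a direct sum of indecomposable projective modules, which are direct summands of free modules'' --- leaves the reader to supply the crucial step that each such summand is actually isomorphic to $kG$ by part (1); you make that explicit, which is the right thing to do.
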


\begin{proof}

\begin{enumerate}
\item
As any group algebra has its group as a basis, $kG$ is free. $kG$ has the only simple module, $k_G$. $kG$ has a unique projective indecomposable module $P_{k_G}$. By \ref{regmodAdecompassumofprojcovers}, $kG=P_{k_G}$ with the multiplicity one as $k_G$ is of dimension one. This shows that $kG$ is itself a projective indecomposable module.
\item
Every finitely generated projective module is a direct sum of indecomposable projective modules, which are direct summands of free modules.
\item
 Let $e$ be an idempotent of $kG$. Every idempotent $e\in kG$ decomposes $kG=kGe \oplus kG(1-e)$. Suppose $xe=y(1-e)$ for some $x,y \in kG$. Then $xe=xe^2=y(1-e)e=0$. This shows $kGe\, \cap \, kG(1-e)=0$. As $1=e+(1-e)\in kGe + kG(1-e)$, $kG= kGe+kG(1-e)$. As $kG$ is indecomposable, if $e \neq 0$, then $kG=kGe$. So $kG(1-e)=0$ and $e=1$.\end{enumerate}
\end{proof}
By the above theorem, any $p$-group algebra $kG$ is itself a projective indecomposable module.
\begin{figure}[h]\centering$kG=P_{k_G}.$\caption{A $p$-group algebra decomposition over characteristic $p$}\end{figure}

\subparagraph{The Decomposition of $kC_2\times C_2$ into Projective Indecomposable Modules}
By \ref{pgroupprojindmod}, $kC_2\times C_2$ over characteristic 2 is the projective indecomposable module $R=P_{k_{C_2\times C_2}}$.$$kC_2\times C_2=R=P_{k_{C_2\times C_2}}.$$
The decomposition of $kC_2\times C_2$ as a direct sum of projective indecomposable $kC_2\times C_2$-modules is $kC_2\times C_2=P_{k_{C_2\times C_2}}.$
The structure of $kC_2\times C_2$ is the radical series of $R$, by the diagram \ref{radicalseriesform}, which is of the form \begin{figure}[H]\centering$\begin{matrix} k_{C_2\times C_2} \\ * \\ k_{C_2\times C_2} \end{matrix}$.\end{figure} As the dimension of $kC_2\times C_2$ is 4, $R$ is of dimension four. This implies that the radical series of $R$ has the structure either
\begin{figure}[h]\centering$$\begin{matrix}\begin{matrix}  & k_{C_2\times C_2} &  \\ k_{C_2\times C_2} & &k_{C_2\times C_2} \\  & k_{C_2\times C_2} &  \end{matrix} &\quad\text{or} \quad\quad&\begin{matrix} k_{C_2\times C_2}  \\ k_{C_2\times C_2}  \\ k_{C_2\times C_2}  \\ k_{C_2\times C_2}  \end{matrix}\end{matrix}\;\;.$$\caption{Possible structures of $R$}\end{figure}

Let $X=1-x$ and $Y=1-y$ where $x$ and $y$ are the generators of the Klein four-group. $X$ and $Y$ are nilpotent by $X^2=Y^2=0$. $kC_2\times C_2 $ has two different ideals, one generated by $X$ and the other by $Y$. This implies there are at least two maximal ideals in $kC_2\times C_2 $. The structure of $kC_2\times C_2$ is $k[X,Y]/\langle X^2, Y^2 \rangle$.\begin{figure}[H]
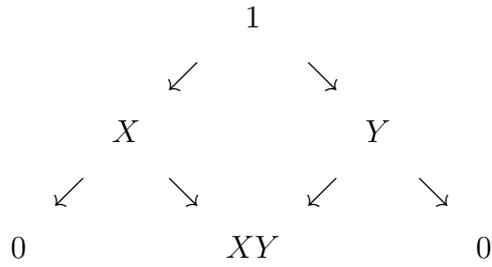

\centering$\begin{matrix} & && & 1 && && \\   &&&   \swarrow &&  \searrow & &  & \\   &&X  &&  && Y& &\\ & \swarrow &  & \searrow  &   &\swarrow & & \searrow & \\  0&&&& XY& &&&0  \end{matrix}.$\caption{$k[X,Y]/\langle X^2, Y^2\rangle$}\end{figure}

$\rad kC_2\times C_2=k\langle X,Y\rangle=\begin{smallmatrix} k_{C_2\times C_2} & \oplus &k_{C_2\times C_2} \\  & k_{C_2\times C_2} &  \end{smallmatrix}$. The radical series of $kC_2\times C_2$ is 
\begin{figure}[H]\centering$\begin{matrix}  & k_{C_2\times C_2} &  \\ k_{C_2\times C_2} & \oplus  &k_{C_2\times C_2} \\  & k_{C_2\times C_2} &  \end{matrix}.$\caption{The radical series of $kC_2\times C_2$}\end{figure}
The decomposition of $kC_2\times C_2$ into projective indecomposable modules is itself.

\subparagraph{The Cartan Matrix of $kC_2\times C_2$}
The Cartan matrix of $kC_2\times C_2$ is in $M_1(\mathbb{N}_0)$ as the group algebra has the only simple module $k_{C_2\times C_2}$. By counting the number of the module in $R$, the Cartan matrix of $kC_2\times C_2$ is
\begin{figure}[h]\centering$[4]$.\caption{The Cartan matrix of $kC_2\times C_2$}\end{figure}

\subparagraph{The decomposition of $kC_2\times C_2$ into Blocks}
As $kC_2\times C_2=P_{k_{C_2\times C_2}}$ and by \ref{regmoddecompwithidempots}, the identity $1$ is the only primitive idempotent. This shows that $kC_2\times C_2$ has the only one block, which is $kC_2\times C_2$ itself.

\chapter{The Alternating Four-Group Algebra over Characteristic $2$}\chaptermark{Group Algebra $kA_4$}

\section{Simple $kA_4$-modules}

The Klein four-group is isomorphic to a Sylow $p$-group of the alternating four-group $A_4$. A \textit{Sylow $p$-subgroup} of a finite group $G$ is a maximal subgroup whose order is the highest power of a prime $p$ which divides the order of $G$. \label{SylowpsubgroupG_p}\index{Sylow $p$-subgroup} Every finite group $G$ has a Sylow $p$-subgroup by Sylow Theorems.
For any $n \in \mathbb{N}$, define $n_p$\index{$n_p$} as the largest power of a prime $p$ that divides $n$. Explicitly, if $n$ can be factorised as $p_1^{a_1}\cdots p_r^{a_r}$, then $n_{p_i}=p_i^{a_i}$. We define $n_{q}$ as $1$ for $q \nmid n$.
$|G|_p$ is the order of a Sylow $p$-subgroup $H$ of $G$. $|A_4|_2$ and $|A_5|_2$ are $4$.

Consider $V_4=\langle (1,2)(3,4), (1,3)(2,4)\rangle$ in $A_4$. Then $V_4$ is isomorphic to the Klein four-group as the generators are of order two and commutative. The order of $V_4$ is four, so $V_4$ is a Sylow $2$-subgroup of $A_4$.
As $V_4$ is a normal subgroup of $A_4$, $A_4/V_4$ forms a group of order $3$ by $|A_4/V_4|=[A_4:V_4]=3$. As the cyclic group $C_3$ is the unique group of order $3$, $A_4/V_4$ is isomorphic to $C_3$. If we denote $O_p(G)$\index{$O_p(G)$} as the unique largest normal $p$-subgroup of $G$, then $V_4=O_2(A_4)$. \index{semidirect} A group $G$ has a \textit{semidirect} product if $G = N \rtimes K$ for a normal subgroup $N$ and a subgroup $K$ such that $N\cap K=1$. Then $A_4 = V_4 \rtimes \langle (1,2,3)\rangle \cong C_2\times C_2 \rtimes C_3$. The next lemma \cite[Corollary 6.2.2]{Webb_2016} shows that $O_p(G)$ acts trivially on each simple $kG$-modules.

\begin{Lemma}\label{OpGtriviallyactsonsimples}
Let $k$ be a field of characteristic $p$ and $G$ be a finite group. Then the common kernel of the action of $G$ on all the simple $kG$-modules, which is $\{ g\in G \,|\, gs=s \text{ for all simples $S$ and $s\in S$}\}$, is $O_p(G)$.
\end{Lemma}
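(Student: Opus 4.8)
The plan is to show that $O_p(G)$ acts trivially on every simple $kG$-module, and that it is the \emph{largest} normal subgroup with this property; together these two facts identify the common kernel $N := \{g \in G \mid gs = s \text{ for all simple } S, s \in S\}$ with $O_p(G)$. For the first inclusion $O_p(G) \subseteq N$: let $S$ be a simple $kG$-module. Since $O_p(G)$ is a normal subgroup, the weak form of Clifford's Theorem (Proposition~\ref{Weak form of Clifford's Theorem}) says $(S)_{O_p(G)}$ is semisimple as a $k[O_p(G)]$-module. But $O_p(G)$ is a $p$-group and $k$ has characteristic $p$, so by Proposition~\ref{pgroupandtrivialmodule} its only simple module is the trivial module; hence $(S)_{O_p(G)}$ is a direct sum of trivial modules, i.e.\ $O_p(G)$ acts trivially on $S$. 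As $S$ was arbitrary, $O_p(G) \subseteq N$.

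For the reverse inclusion $N \subseteq O_p(G)$: first observe that $N$ is a normal subgroup of $G$ (it is the intersection of the kernels of the representations $G \to \GL(S)$, each of which is normal, and normality is preserved under intersection). It then suffices to show $N$ is a $p$-group, since $O_p(G)$ is by definition the unique largest normal $p$-subgroup. To see $N$ is a $p$-group, I would argue as follows. Every simple $kG$-module is, via the quotient map $G \to G/N$, a simple $k[G/N]$-module, and conversely; so the trivial module $k_G$ is the only simple $k[G/N]$-module if and only if $k_G$ is the only simple $kG$-module. More usefully, consider a Sylow $p$-subgroup $Q$ of $N$. If $N$ were not a $p$-group, then $N$ would have an element of order coprime to $p$, and one can produce a nontrivial simple $kN$-module on which that element acts nontrivially; inducing up to $G$ (using Lemma~\ref{indrestrprojmodareproj}-style reasoning, or simply that a simple quotient of such an induced module is a simple $kG$-module) one finds a simple $kG$-module on which $N$ does not act trivially, contradicting $g \in N$. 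More cleanly: $N$ acts trivially on every simple $kG$-module, hence $\rad(kG)$ contains the augmentation ideal of $kN$ inside $kG$, i.e.\ the kernel of $kG \to k[G/N]$; but $\rad(kG)$ is nilpotent, which forces that kernel to be nilpotent, and the kernel of $kG \to k[G/N]$ is nilpotent precisely when $N$ is a $p$-group (this is the standard fact that $k[G/N]$ being the semisimple quotient obstruction forces $|N|$ to be a $p$-power in characteristic $p$).

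The main obstacle is the reverse inclusion, specifically pinning down cleanly \emph{why} a normal subgroup acting trivially on all simples must be a $p$-group. The cleanest route is: the two-sided ideal $I = \ker(kG \twoheadrightarrow k[G/N])$ annihilates every simple $kG$-module (since $N$ acts trivially on each), so $I \subseteq \rad(kG)$ by the characterization of the Jacobson radical in Proposition~\ref{JR}(i); therefore $I$ is nilpotent. The ideal $I$ is generated by $\{n - 1 : n \in N\}$, and its nilpotence is equivalent to the augmentation ideal of $kN$ being nilpotent, which (by the cyclic $p$-group computation underlying Lemma~\ref{kGkXXpn}, extended to general $p$-groups via Proposition~\ref{pgroupandtrivialmodule} and an induction on $|N|$) holds exactly when $N$ is a $p$-group. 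Granting this, $N$ is a normal $p$-subgroup, so $N \subseteq O_p(G)$, and combined with the first paragraph we conclude $N = O_p(G)$.
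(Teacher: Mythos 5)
Your forward inclusion $O_p(G) \subseteq N$ is exactly the paper's argument and is fine. Your strategy for the reverse inclusion is also correct at the level of outline: show $N$ is normal (easy, as you say) and that $N$ is a $p$-group, then invoke maximality of $O_p(G)$. The genuine gap is in the step you yourself flag as ``the main obstacle.'' You reduce to the claim that the augmentation ideal $\omega(kN)$ being nilpotent forces $N$ to be a $p$-group, but you never actually prove it. The references you cite point the wrong way: Lemma~\ref{kGkXXpn} and Proposition~\ref{pgroupandtrivialmodule} establish that a $p$-group has only the trivial simple module (equivalently, that $\omega(kN)=\rad(kN)$ is nilpotent when $N$ \emph{is} a $p$-group), i.e.\ the converse implication, not the one you need. ``An induction on $|N|$'' is named but not carried out, and Lemma~\ref{indrestrprojmodareproj} concerns projectives, not simples, so the ``inducing up'' sketch in your second paragraph is also not supported by the citation; to make that sketch rigorous you would need to run Frobenius reciprocity explicitly to see that a simple quotient $S$ of $T^{G}$ contains the nontrivial $k\langle n\rangle$-module $T$ as a summand of $(S)_{\langle n\rangle}$.

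The paper's route for the reverse inclusion is shorter and avoids the ideal-theoretic reduction entirely. It takes a hypothetical element $h \in H$ with $\gcd(\ord h, p)=1$, observes via Maschke's Theorem~\ref{Maschke'sthm} that $(kG)_{\langle h\rangle}$ is a semisimple $k\langle h\rangle$-module, and that restricting a composition series of ${}_{kG}kG$ exhibits $(kG)_{\langle h\rangle}$ as a direct sum of the restrictions $(S)_{\langle h\rangle}$ of simple $kG$-modules. Since $h$ acts trivially on every such $S$ by definition of $H$, it acts trivially on $kG$, whence $h = h\cdot 1 = 1$; so every element of $H$ has $p$-power order and $H$ is a $p$-group. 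What your ``cleanest route'' buys, once completed, is a ring-theoretic restatement ($I\subseteq\rad kG$ by Proposition~\ref{JR}(i)) that is conceptually tidy; but you still need essentially the same Maschke step to finish showing $\omega(kN)$ nilpotent implies $N$ is a $p$-group (e.g.\ take $n\in N$ of prime order $q\neq p$; then $1-n\in\omega(kN)$ is nilpotent, yet $n$ is diagonalisable with distinct $q$-th roots of unity as possible eigenvalues, forcing $n=1$). As written, the proposal leaves that step open.
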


\begin{proof}
Let $H$ be the kernel of the $G$-action on all simple $kG$-modules. That is,
$$H=\{g\in G \,|\, gs=s, \text{ for all simples $S$ and $s\in S$}\}.$$
By Clifford's Theorem, if $S$ is a simple $kG$-module, then $(S)_{O_p(G)}$ is semisimple. By \ref{pgroupandtrivialmodule}, $O_p(G)$ acts trivially on $S$, so $O_p(G)\subseteq H$. Conversely, it suffices to show that $H$ contains no element of order prime to $p$. Assume that there is such an element $h\in H$ with $(\ord(h),p)=1$. Then $(kG)_{\langle h\rangle}$ is semisimple $k\langle h \rangle$-module, so is the direct sum of modules $(S)_{\langle h\rangle}$. As $h$ acts trivially on all of the direct summands $(S)_{\langle h\rangle}$, $h$ acts trivially on $kG$, which leads to contradiction. This shows that $H$ is a $p$-subgroup. Clearly, $H$ is a normal subgroup of $G$. It follows $H\subseteq O_p(G)$.
\end{proof}

By \ref{OpGtriviallyactsonsimples}, the simple $kG$-modules are precisely the simple $k[G/O_p(G)]$-modules. It follows from the quotient homomorphism $G\rightarrow G/O_p(G)$. $A_4/V_4\cong C_3$. For an abelian group, every simple has dimension 1 \cite[Theorem 5.3.3]{Webb_2016}, so it's easier to find simple modules for cyclic groups. From the simples of $kC_3$, we want to find all simple $kA_4$-modules.
\begin{Proposition}\label{abeliansimplekGmoddim1}
Let $k$ be an algebraically closed field and $G$ be an abelian group. Then every simple $kG$-module has its dimension $1$.
\end{Proposition}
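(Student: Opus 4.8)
The plan is to show that every element of $G$ acts on a simple $kG$-module by a scalar, so that every line in the module is a submodule and simplicity forces the module itself to be a line. First I would fix a simple $kG$-module $S$ together with an element $g \in G$, and consider the map $\rho_g \colon S \to S$ defined by $\rho_g(s) = gs$. Since $G$ is abelian, $\rho_g$ commutes with the action of every $h \in G$: indeed $\rho_g(hs) = ghs = hgs = h\rho_g(s)$, so $\rho_g \in \End_{kG}(S)$. A simple module over the finite-dimensional algebra $kG$ is itself finite-dimensional (it is cyclic, being generated by any nonzero element), so Schur's Lemma applies over the algebraically closed field $k$ and gives $\End_{kG}(S) \cong k$. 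Hence $\rho_g = \lambda_g \cdot 1_S$ for some scalar $\lambda_g \in k$.

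Next I would use this to finish. Because each $g \in G$ acts on $S$ as a scalar, every $k$-subspace of $S$ is $G$-stable, hence a $kG$-submodule. Choosing any nonzero $v \in S$, the line $kv$ is then a nonzero submodule of $S$, so simplicity forces $S = kv$, and therefore $\dim_k S = 1$.

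There is no substantial obstacle here: the only point requiring care is checking that the hypotheses of Schur's Lemma are genuinely in force, namely that $S$ is finite-dimensional so that $\End_{kG}(S) \cong k$ really holds in the algebraically closed setting; everything after that is immediate. One may also phrase the conclusion representation-theoretically, as the statement that a simple $kG$-module for $G$ abelian is precisely a linear character $G \to k^\times$.
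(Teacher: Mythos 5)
Your proof is correct, and it reaches the same destination as the paper's proof—that each $g \in G$ acts on $S$ by a scalar—but via a slightly different route. You observe that commutativity of $G$ makes $\rho_g \colon s \mapsto gs$ a $kG$-module endomorphism of $S$, and then invoke Schur's Lemma over the algebraically closed field $k$ to conclude $\rho_g$ is a scalar. The paper instead argues directly: pick an eigenvalue $\lambda$ of $\rho_g$ (using algebraic closure), show the eigenspace $S_\lambda$ is $G$-stable by the same commutativity computation you carry out, and conclude $S_\lambda = S$ by simplicity. The eigenspace argument is in effect a re-proof of the relevant half of Schur's Lemma specialized to this situation, so the two proofs differ only in whether Schur's Lemma is used as a black box or unwound by hand. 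Your version is marginally cleaner given that the paper has already proved Schur's Lemma a few pages earlier; the paper's version is more self-contained and makes the role of algebraic closure visible at the point where an eigenvalue is extracted. Both correctly finish by noting that once every group element acts by a scalar, every line is a submodule, so simplicity forces $\dim_k S = 1$. Your side remark about finite-dimensionality of $S$ is well taken and implicitly relies on the standing assumption that $G$ is finite, which the paper makes globally.
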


\begin{proof}
Let $S$ be a simple $kG$-module.
Every $g$ action on $S$ has an eigenvalue $\lambda$ in the algebraically closed field $k$, so the corresponding eigenspace $S_{\lambda}$ is nonzero. We want to show $hS_{\lambda}=S_{\lambda}$ for every $h$ in $G$. Each $v$ in $S_{\lambda}$ satisfies $gv=\lambda v$.  $hv\in S_{\lambda}$, as $g(hv)=h(gv)=h(\lambda v)=\lambda hv$, so $v\in hS_{\lambda}$. For any $w=hv'\in hS_{\lambda}$, $g(hv')=h(gv')=h(\lambda v')=\lambda(hv')$, so $w\in S_{\lambda}$. This shows $S_{\lambda}$ is a $kG$-submodule of $S$ and $S_{\lambda}=S$ as $S$ is simple. This means the action of every element $g$ in $G$ is a scalar multiplication on $S$. This implies $S$ has dimension $1$.
\end{proof}

By the above theorem, every simple $kC_3$-module has dimension one. In representation theory, we know that $C_3$ has three $1$-dimensional irreducible representations. We use a primitive third root of unity to explicitly write all irreducible representations of $C_3$. Consider the polynomial equation $x^2+x+1=0$ in $\mathbb{F}_2[x]$. As any of zero and one is not a root of the equation, the polynomial is irreducible over $\mathbb{F}_2$. Define $\omega$ as a root of this polynomial. Then $\mathbb{F}_4=\{0, 1, \omega, \omega+1\}$ forms a field, as $\omega$ and $\omega+1$ are inverses to each other by $\omega(\omega+1)=1$. $\omega$ is a primitive $3$rd root of unity by $\omega^3=\omega^2\times \omega=(\omega+1)\omega=1$ and $\omega^2\neq 0$. The irreducible representations of $C_3$ where $g$ is a generator are defined as follows:
\begin{align*}
k_{C_3}=T_1':& \;\;C_3 \rightarrow \GL_1(k) \text{ defined by } g \mapsto (1),\\
T_2': &\;\;C_3 \rightarrow \GL_1(k) \text{ defined by } g \mapsto (\omega),\;\;\text{and}\\
T_3': &\;\;C_3 \rightarrow \GL_1(k) \text{ defined by } g \mapsto (\omega^2).
\end{align*}
By identifying $A_4/V_4$ with $C_3$, $k(A_4/V_4)$ has $3$ simple modules $T_i'$ of dimension 1 where $g$ is replaced by the coset $(1,2,3)V_4$ in $A_4/V_4$. We want to inflate the simples $T_i'$ from $A_4/V_4$ to $A_4$. Let $N$ be a normal subgroup of $G$ and $f:N\rightarrow G/N$ be a quotient map defined by $f(g)=gN$. The \textit{inflation} $\Inf_{G/N}^G(U)$ on the module $U$ from $G/N$ to $G$ is a module with the action defined by $g\cdot u=(gN)u$ for all $g\in G$ and $u\in U$. By \ref{OpGtriviallyactsonsimples}, $V_4$ acts trivially on all simple modules $S$ in $kA_4$. This implies that the simple $kA_4$-modules $T_i$ are precisely the inflated simple modules $\Inf_{A_4/V_4}^{A_4}(T_i')$ from $A_4/V_4$ to $A_4$, $T_i=\Inf_{A_4/V_4}^{A_4}(T_i')$. The simple $kA_4$-modules $T_i$ are
\begin{align*}
k_{A_4}=T_1: &\;\;A_4 \rightarrow GL_1(k) \text{ where } (1, 2, 3) \text{ maps onto } (1),&\\
T_2: &\;\;A_4 \rightarrow \GL_1(k) \text{ where } (1, 2, 3) \text{ maps onto } (\omega),\;\;\text{and}\\
T_3: &\;\;A_4 \rightarrow \GL_1(k) \text{ where } (1, 2, 3) \text{ maps onto } (\omega^2).&
\end{align*}

\section{Indecomposable $kA_4$-modules}\label{IndecompM_iandPM_iIM_i}

The following $2$-dimensional $kA_4$-modules are given in the notes \cite{C.W.Eaton}.
$$M_1: \begin{smallmatrix} (1,2,3)\mapsto \big[\begin{smallmatrix} \omega & 0 \\0 &\omega^2 \end{smallmatrix}\big] & (1,2)(3,4) \mapsto \big[\begin{smallmatrix} 1 & \omega \\0 &1 \end{smallmatrix}\big] \end{smallmatrix}, \quad M_2: \begin{smallmatrix} (1,2,3)\mapsto \big[\begin{smallmatrix} \omega^2 & 0 \\0 &\omega \end{smallmatrix}\big] &(1,2)(3,4) \mapsto \big[\begin{smallmatrix} 1 & \omega \\0 & 1 \end{smallmatrix}\big] \end{smallmatrix},$$
$$M_3: \begin{smallmatrix} (1,2,3)\mapsto \big[\begin{smallmatrix} 1 & 0 \\0 & \omega \end{smallmatrix}\big] & (1,2)(3,4) \mapsto \big[\begin{smallmatrix} 1 & \omega \\0 & 1 \end{smallmatrix}\big] \end{smallmatrix}, \quad M_4:\begin{smallmatrix} (1,2,3)\mapsto \big[\begin{smallmatrix} \omega & 0 \\0 & 1 \end{smallmatrix}\big] & (1,2)(3,4) \mapsto \big[\begin{smallmatrix} 1 & \omega \\0 & 1 \end{smallmatrix}\big] \end{smallmatrix},$$
$$M_5:\begin{smallmatrix}  (1,2,3)\mapsto \big[\begin{smallmatrix} {\omega}^2 & 0 \\0 & 1 \end{smallmatrix}\big] & (1,2)(3,4) \mapsto \big[\begin{smallmatrix} 1 & \omega \\0 & 1 \end{smallmatrix}\big] \end{smallmatrix}, \quad \text{and} \quad M_6: \begin{smallmatrix}  (1,2,3) \mapsto  \big[\begin{smallmatrix} 1 & 0 \\0 & {\omega}^2 \end{smallmatrix}\big]  &  (1,2)(3,4) \mapsto \big[\begin{smallmatrix} 1 & \omega \\0 & 1 \end{smallmatrix}\big]   \end{smallmatrix}.$$

\subparagraph{Radical Series of $M_i$}
$M_1$ has a submodule $N_1=\left\{\big[\begin{smallmatrix} x \\0 \end{smallmatrix}\big]: x\in k\right\}$ with a basis $\big\{\big[\begin{smallmatrix} 1 \\0 \end{smallmatrix}\big]\big\}$ which is isomorphic to $T_2$, as
$(1,2,3) \mapsto (\omega)$ by $(1, 2, 3)\cdot \big[\begin{smallmatrix} 1 \\0 \end{smallmatrix}\big]=\big[\begin{smallmatrix}  \omega & 0\\  0 & {\omega}^2 \end{smallmatrix}\big]\big[\begin{smallmatrix} 1 \\0 \end{smallmatrix}\big]=\omega\big[\begin{smallmatrix} 1 \\0 \end{smallmatrix}\big]$ and similarly, $(1,2)(3,4) \mapsto (1)$.
$M_1$ has the quotient $M_1/N_1=\left\{\big[\begin{smallmatrix} 0 \\ y \end{smallmatrix}\big]+N_1: y\in k \right\}$ with a basis $\big\{\big[\begin{smallmatrix} 0 \\ 1 \end{smallmatrix}\big]+N_1\big\}$ isomorphic to $T_3$, as $(1,2,3) \mapsto \left(\omega^2\right)$ by $(1, 2, 3) \cdot (\big[\begin{smallmatrix} 1 \\ 0 \end{smallmatrix} \big]+N_1)=\big[\begin{smallmatrix}  \omega & 0\\  0 & {\omega}^2 \end{smallmatrix}\big] \big[\begin{smallmatrix} 0 \\1 \end{smallmatrix}\big]+N_1={\omega^2} \left(\big[\begin{smallmatrix} 0 \\ 1 \end{smallmatrix}\big]+N_1\right)$ and $(1,2)(3,4) \mapsto \left(1\right)$. $T_3$ is the unique quotient of $M_1$ and $T_2$ is the unique submodule of $M_1$ as $M_1$ is of dimension two. Therefore, the radical series of $M_1$ is $\begin{smallmatrix} T_3= M_1/\Rad M_1 \\ T_2= \Rad M_1 \end{smallmatrix}$. The structure shows that $M_1$ is uniserial and thus indecomposable.
$$M_1: \begin{smallmatrix} T_3 \\ T_2 \end{smallmatrix}\;\;.$$

We do the same process on $M_2,\cdots, M_6$. $M_2$ has a submodule $N_2$ with a basis $\big\{\big[\begin{smallmatrix} 1 \\0 \end{smallmatrix}\big]\big\}$ which is isomorphic to $T_3$ by $(1, 2, 3)\cdot \big[\begin{smallmatrix} 1 \\0 \end{smallmatrix}\big]=\big[\begin{smallmatrix}  {\omega}^2 & 0\\  0 & \omega \end{smallmatrix}\big]\big[\begin{smallmatrix} 1 \\0 \end{smallmatrix}\big]={\omega}^2 \big[\begin{smallmatrix} 1 \\0 \end{smallmatrix}\big],$ i.e., $(1,2,3) \mapsto ({\omega}^2)$. The  quotient $M_2/N_2\cong T_2$ as $(1,2,3) \mapsto \left(\omega \right)$ by $(1, 2, 3) (\big[\begin{smallmatrix} 0 \\ 1 \end{smallmatrix} \big]+N_2)=\big[\begin{smallmatrix}  {\omega}^2 & 0\\  0 & \omega \end{smallmatrix}\big] \big[\begin{smallmatrix} 0 \\1 \end{smallmatrix}\big]+N_2=\omega \left(\big[\begin{smallmatrix} 0 \\ 1 \end{smallmatrix}\big]+N_2\right)$. The radical series of $M_2$ is $$M_2: \begin{smallmatrix} T_2 \\ T_3 \end{smallmatrix}.$$

A submodule $N_3$ of $M_3$ with a basis $\{\big[\begin{smallmatrix} 1 \\0 \end{smallmatrix}\big]\}$ is isomorphic to $T_1$ by $(1,2,3) \mapsto (1)$. The quotient $M_3/N_3\cong T_2$ as $(1,2,3) \mapsto \left(\omega \right)$. The radical series of $M_3$ is $$M_3: \begin{smallmatrix} T_2 \\ T_1 \end{smallmatrix}.$$

A submodule $N_4$ of $M_4$ with a basis $\{\big[\begin{smallmatrix} 1 \\0 \end{smallmatrix}\big]\}$ is isomorphic to $T_2$ by $(1,2,3) \mapsto (\omega)$. The quotient $M_4/N_4\cong T_1$ by $(1,2,3) \mapsto \left(1 \right)$. The radical series of $M_4$ is \begin{equation*}M_4: \begin{smallmatrix} T_1 \\ T_2 \end{smallmatrix}.\end{equation*}

A submodule $N_5$ of $M_5$ with a basis $\{\big[\begin{smallmatrix} 1 \\0 \end{smallmatrix}\big]\}$ is isomorphic to $T_3$ by $(1,2,3) \mapsto (\omega^2)$. The quotient $M_5/N_5\cong T_1$ by $(1,2,3) \mapsto \left(1 \right)$. The radical series of $M_5$ is \begin{equation*}M_5: \begin{smallmatrix} T_1 \\ T_3 \end{smallmatrix}.\end{equation*}

A submodule $N_6$ of $M_6$ with a basis $\{\big[\begin{smallmatrix} 1 \\0 \end{smallmatrix}\big]\}$ is isomorphic to $T_1$ by $(1,2,3) \mapsto (1)$. The quotient $M_6/N_6\cong T_3$ by $(1,2,3) \mapsto \left( \omega^2 \right)$. The radical series of $M_6$ is $$M_6: \begin{smallmatrix} T_3 \\ T_1 \end{smallmatrix}.$$
$M_i$ are uniserial by their radical series structures and thus indecomposable.
\begin{figure}[H]\label{M_i}
\centering
\begin{align}  M_1: \begin{smallmatrix} T_3 \\ T_2 \end{smallmatrix},\quad &\quad M_2: \begin{smallmatrix} T_2 \\ T_3 \end{smallmatrix}, & M_3: \begin{smallmatrix} T_2 \\ T_1 \end{smallmatrix}, \quad & \quad M_4: \begin{smallmatrix} T_1 \\ T_2 \end{smallmatrix}, & M_5: \begin{smallmatrix} T_1 \\ T_3 \end{smallmatrix}, &\quad \text{and}&M_6: \begin{smallmatrix} T_3 \\ T_1 \end{smallmatrix}.  \end{align}
\caption{Indecomposable $kA_4$-modules $M_i$}
\end{figure}

\subparagraph{The Projective Cover and Injective Envelope of $M_i$}
As $kA_4$ has three simples, by \ref{Onetoonecorrespondenceprojectivesimple}, it has three projective indecomposable modules, say $Q_i$. The radical series of $Q_i$ can be found from the projective covers and the injective envelopes of the indecomposable modules $M_i$. By \ref{M_i}, $M_i/\Rad M_i$ and $\soc M_i$ are simple. By the relation \ref{PPsIs},
\begin{equation*}
Q_i= P_{T_i}= I_{T_i} \quad \text{for all} \; i=1,2,3.
\end{equation*} As $P_{T_i}=Q_i$ and $P_{M_i} \rightarrow M_i$ is an essential epimorphism, $P_{M_i}$ is one of $Q_1$, $Q_2$ and $Q_3$. By the duality, so is $I_{M_i}$. Let $T_l=M_i/\rad M_i$ and $T_k=\soc M_i$ where $M_i:\begin{smallmatrix} T_l \\ T_k \end{smallmatrix}$. We recall that $P_{M_i} \rightarrow \begin{smallmatrix} T_l \\ T_k\end{smallmatrix}$ is an essential epimorphism and $\begin{smallmatrix} T_l \\ T_k \end{smallmatrix} \rightarrow I_{M_i}$ is an essential monomorphism.
Then $P_{M_i}=Q_l$ since, among $Q_i$, only $Q_l \rightarrow T_l$ maps $P_{M_i}$ onto $T_l\cong P_{M_i}/\rad P_{M_i}$. Similarly, $I_{M_i}=Q_k$ as, among $Q_i$, only $T_k\rightarrow Q_k$ maps $T_k \cong \soc P_{M_i}$ onto $Q_k$. For $M_i=\begin{smallmatrix} T_l \\ T_k \end{smallmatrix}$,
\begin{equation*}
P_{M_i}=Q_l, \quad\text{and}\quad I_{M_i}=Q_k.
\end{equation*}
We obtain the following result.
 \begin{align}\label{Qiprojcovinjcov}
 Q_1=P_{M_4}=P_{M_5}=I_{M_3}=I_{M_6}\\
Q_2=P_{M_2}=P_{M_3}=I_{M_1}=I_{M_4}\\
\quad Q_3=P_{M_1}=P_{M_6}=I_{M_1}=I_{M_5}.
\end{align}

\section{Projective Indecomposable $kA_4$-modules}\label{PIQi}
With the results in the previous section, we want to find the radical series of $Q_i$. First, we find the dimension of $Q_i$.
\subparagraph{The Dimension of $Q_i$}
In a group algebra over characteristic $p$, the dimension of a projective module is divisible by the largest power of $p$ that divides the group order \cite[Corollary 8.1.3]{Webb_2016}.

\begin{Corollary}\label{largpowerofpdivdesdimofprojmod}
Let $k$ be a field of characteristic $p$ and $p^a$ be the largest power of $p$ that divides $|G|$. If $P$ is a projective $kG$-module, then $p^a| \dim P$.

\end{Corollary}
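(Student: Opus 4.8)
The plan is to reduce to the case of a $p$-group by restricting $P$ to a Sylow $p$-subgroup, where the projective-implies-free phenomenon from Theorem~\ref{pgroupprojindmod2} forces the dimension to be a multiple of the subgroup order. First I would fix a Sylow $p$-subgroup $H$ of $G$; by definition of a Sylow subgroup and of $p^a$ we have $|H| = |G|_p = p^a$. Note $\dim_k P = \dim_k (P)_H$, since restriction does not change the underlying $k$-vector space, so it suffices to understand $(P)_H$ as a $kH$-module.

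Next I would invoke Lemma~\ref{indrestrprojmodareproj}(1): since $P$ is a projective $kG$-module, its restriction $(P)_H$ is a projective $kH$-module. Because $H$ is a $p$-group and $k$ has characteristic $p$, Theorem~\ref{pgroupprojindmod2}(2) tells us that every finitely generated projective $kH$-module is free; hence $(P)_H \cong (kH)^m$ for some non-negative integer $m$. Counting dimensions gives $\dim_k P = \dim_k (P)_H = m\,|H| = m\,p^a$, and therefore $p^a \mid \dim_k P$, as claimed.

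There is no real obstacle here: the statement is essentially a bookkeeping corollary of material already established. The only points requiring a moment's care are that restriction to a subgroup preserves the $k$-dimension (immediate from the definition of the restricted representation) and that one must restrict rather than induce, so that Lemma~\ref{indrestrprojmodareproj}(1) applies in the correct direction. The substantive input is entirely contained in Theorem~\ref{pgroupprojindmod2}, namely that over a field of characteristic $p$ a $p$-group algebra has no projective modules other than free ones.
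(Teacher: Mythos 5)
Your proposal is correct and follows exactly the paper's own argument: restrict $P$ to a Sylow $p$-subgroup $H$, use Lemma~\ref{indrestrprojmodareproj}(1) to see $(P)_H$ is projective, apply Theorem~\ref{pgroupprojindmod2}(2) to conclude $(P)_H$ is free over $kH$, and count dimensions. No differences to report.
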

\begin{proof} 
$p^a$ is the order of a Sylow $p$-subgroup $H$ of $G$ by \ref{SylowpsubgroupG_p}. If $P$ is a projective $kG$-module, then $(P)_H$ is projective by \ref{indrestrprojmodareproj}. $(P)_H$ is free as a $kH$-module by \ref{pgroupprojindmod2}, so $|H|$ divides $\dim (P)_H$.
\end{proof}

By the above corollary, each dimension of $Q_i$ is divisible by $4$, so $12 \leq \dim Q_1 + \dim Q_2 + \dim Q_3$. The equality holds as $12=\dim Q_1 + \dim Q_2 + \dim Q_3$ by \ref{dimAbydecompproj}. Each $\dim Q_i$ is of dimension 4. With the relation $Q_i/\Rad Q_i \cong \Soc Q_i \cong T_i$, each radical series of $Q_i$ has two possible structures,
\begin{figure}[h]
\centering
$\begin{matrix} \begin{matrix} T_i \\ \openbox \\ \openbox \\ T_i \end{matrix} $\quad\quad  \text{or} \quad\quad$\begin{matrix} &T_i& \\ \openbox & & \openbox \\  &T_i& \;\;.\end{matrix}\end{matrix}$
\caption{Possible radical series structures of $Q_i$}
\end{figure}

\paragraph{Projective Indecomposable Modules $Q_i$}
By the equation \ref{Qiprojcovinjcov}, $Q_1$ has quotients $\begin{smallmatrix} T_1 \\ T_2 \end{smallmatrix} $ and $\begin{smallmatrix} T_1 \\ T_3 \end{smallmatrix} $, and submodules $\begin{smallmatrix} T_2 \\ T_1 \end{smallmatrix}$ and $\begin{smallmatrix} T_3 \\ T_1 \end{smallmatrix}$. This implies $\Rad Q_1= \begin{smallmatrix} T_2 & & T_3 \\  &T_1& \end{smallmatrix}$, $\Rad^2 Q_1 = T_1$ and $T_2 \oplus T_3=\Rad Q_1/\Rad^2 Q_1$. Similarly, $Q_2$ has quotients $\begin{smallmatrix} T_2 \\ T_3 \end{smallmatrix} $ and $\begin{smallmatrix} T_2 \\ T_1 \end{smallmatrix}$, and submodules $\begin{smallmatrix} T_3 \\ T_2 \end{smallmatrix}$ and $\begin{smallmatrix} T_1 \\ T_2 \end{smallmatrix}$. $Q_3$ has quotients $\begin{smallmatrix} T_3 \\ T_2 \end{smallmatrix} $ and $\begin{smallmatrix} T_3 \\ T_1 \end{smallmatrix}$, and submodules $\begin{smallmatrix} T_2 \\ T_3 \end{smallmatrix}$ and $\begin{smallmatrix} T_1 \\ T_3 \end{smallmatrix}$. The radical series of $Q_i$ are
\begin{figure}[h]\centering
\begin{align}
\label{Q1projindradser}
Q_1: &\;\; \begin{matrix} &T_1& \\T_2 & \oplus & T_3 \\  &T_1& \;\;. \end{matrix}\;, &
Q_2:&\;\; \begin{matrix} &T_2& \\T_1 & \oplus & T_3 \\  &T_2& \end{matrix}\;,& Q_3:\;\;\begin{matrix} &T_3& \\T_1 &\oplus & T_2 \\  &T_3&\;\; \end{matrix}\;.
\end{align}
 \caption{The projective indecomposable modules of $A_4$}
\end{figure}

\subparagraph{Decomposition of $kA_4$ into Projective Indecomposable Modules}
The decomposition of $kA_4$ as a direct sum of projective indecomposable modules (up to isomorphism) is
 \begin{figure}[h]\label{pimA4}\centering
\begin{align*}
kA_4 \cong Q_1 \oplus Q_2 \oplus Q_3\\
\text{where}\quad & Q_1:\begin{matrix} &T_1& \\T_2 & & T_3 \\  &T_1& \end{matrix}, & Q_2:\begin{matrix} &T_2& \\T_1 & & T_3 \\  &T_2& \end{matrix}, \quad \text{ and }\;\; & Q_3:\begin{matrix} &T_3& \\T_1 & & T_2 \\  &T_3& \end{matrix}.
\end{align*}
\end{figure}
 
 Simply, $$kA_4 \cong {}^{T_1}T_2 T_3 {}_{T_1}\oplus {}^{T_2}T_1 T_3 {}_{T_2} \oplus {}^{T_3}T_1 T_2 {}_{T_3}.$$

\subparagraph{The Cartan Matrix of $kA_4$}
The Cartan matrix of $kA_4$ is in $M_3(\mathbb{N}_0)$ since $kA_4$ has three simple modules. As $P_{T_j}=Q_j$, $(i,j)$th entry of the matrix is obtained by finding the number of multiplicity of $T_i$ in $Q_j$ explicitly described in the diagram \ref{Q1projindradser}. Hence, the Cartan matrix $(C_{T_iT_j})$ of $kA_4$ is
\begin{figure}[h!]\label{CartanmatrixA4}
\centering
$\left[\begin{matrix}  2 & 1& 1\\ 1&2&1\\1&1&2       \end{matrix}\right].$
\caption{The Cartan matrix of $kA_4$}
\end{figure}

\subparagraph{Primitive Orthogonal Idempotents in $kA_4$} By \ref{regmoddecompwithidempots}, each $Q_i$ corresponds with a primitive orthogonal idempotent in $kA_4$. We want to find all the corresponding primitive orthogonal idempotents. The following theorem\cite[Theorem 3.6.2]{Webb_2016} gives a formula for block idempotents in a complex group algebra. We find the block idempotents in the complex group algebra $\mathbb{C}A_4$ and the elements modulo 2 on coefficients are the primitive orthogonal idempotents in $kA_4$.

\begin{Theorem}\label{centidemformula}
Let $\chi_1,\cdots,\chi_r$ be the irreducible complex characters of $G$ with degrees $d_1,\cdots, d_r$. ($\chi_i(1)=d_i$). The primitive central idempotent elements in $\mathbb{C}G$ are the elements
\begin{equation} \frac{d_i}{|G|}\sum_{g\in G}\chi_i(g^{-1})g
\end{equation}
where $1\leq i \leq r$.

The corresponding indecomposable ring summand of $\mathbb{C}G$ has an irreducible representation that affords the character $\chi_i$.
\end{Theorem}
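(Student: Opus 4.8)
The plan is to first put $\mathbb{C}G$ into Wedderburn form and identify the blocks with the irreducible representations, then pin down each central idempotent by evaluating a candidate formula against every irreducible representation and invoking character orthogonality. Since $\mathbb{C}$ has characteristic zero, $|G|$ is invertible in $\mathbb{C}$, so $\mathbb{C}G$ is semisimple (Maschke's Theorem) and the Artin--Wedderburn Theorem~\ref{Artinwedderburnthm} gives an algebra isomorphism $\rho = \bigoplus_{i=1}^r \rho_i : \mathbb{C}G \xrightarrow{\sim} \bigoplus_{i=1}^r M_{d_i}(\mathbb{C})$, where $\rho_i$ is the irreducible representation affording $\chi_i$ (so $d_i = \chi_i(1)$) and the $r$ matrix-algebra summands, being indecomposable as rings, are exactly the blocks of $\mathbb{C}G$. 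By Theorem~\ref{ringdecompcentralorthogidemp} the identity decomposes uniquely as $1 = e_1 + \cdots + e_r$ into primitive central orthogonal idempotents, with $e_i$ the identity element of the $i$-th block; equivalently $\rho_j(e_i) = \delta_{ij} I_{d_j}$ for all $j$. Because $\rho$ is injective, $e_i$ is the \emph{unique} element of $\mathbb{C}G$ with this property, so it suffices to produce one element of $\mathbb{C}G$ that $\rho$ sends to the tuple $(\delta_{ij} I_{d_j})_{j}$.

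Next I would introduce the candidate $f_i := \frac{d_i}{|G|}\sum_{g\in G}\chi_i(g^{-1})\, g$ and compute $\rho_j(f_i) = \frac{d_i}{|G|}\sum_{g\in G}\chi_i(g^{-1})\,\rho_j(g)$. Since $\chi_i$ is a class function, the matrix $\frac{1}{|G|}\sum_{g}\chi_i(g^{-1})\rho_j(g)$ commutes with $\rho_j(h)$ for every $h\in G$ (conjugating the sum just permutes it), so by Schur's Lemma it is a scalar matrix $\lambda I_{d_j}$; taking traces and using the first orthogonality relation $\frac{1}{|G|}\sum_g \chi_i(g^{-1})\chi_j(g) = \delta_{ij}$ gives $\lambda = \delta_{ij}/d_j$. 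Hence
$$\frac{d_j}{|G|}\sum_{g\in G}\chi_i(g^{-1})\,\rho_j(g) = \delta_{ij}\, I_{d_j},$$
and since the left-hand side vanishes unless $i=j$ — in which case $d_i = d_j$ — we may replace $d_j$ by $d_i$, obtaining $\rho_j(f_i) = \delta_{ij} I_{d_j} = \rho_j(e_i)$ for every $j$. Injectivity of $\rho$ then forces $f_i = e_i$, so the primitive central idempotents are precisely the elements $\frac{d_i}{|G|}\sum_g \chi_i(g^{-1}) g$.

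For the final clause: the indecomposable ring summand of $\mathbb{C}G$ attached to $e_i$ is the block $\mathbb{C}G e_i$, which $\rho$ carries isomorphically onto $M_{d_i}(\mathbb{C})$; the unique simple module of $M_{d_i}(\mathbb{C})$ pulls back along $\rho_i$ to an irreducible $\mathbb{C}G$-module whose character is $\chi_i$ by construction, so $\mathbb{C}G e_i$ affords $\chi_i$, as asserted.

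I expect the main obstacle to be the matrix-valued orthogonality identity in the second paragraph: it is the one genuinely computational input, and making it rigorous requires either the Schur orthogonality relations for matrix coefficients or the Schur's-Lemma-plus-trace reduction sketched above (together with the first character orthogonality relation, which itself rests on Maschke and Schur). Everything else is bookkeeping with the already-established Artin--Wedderburn Theorem~\ref{Artinwedderburnthm} and the block-decomposition Theorem~\ref{ringdecompcentralorthogidemp}.
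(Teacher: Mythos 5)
Your proposal is correct and follows the same route as the paper: use the Artin--Wedderburn decomposition of $\mathbb{C}G$ to identify the blocks with the irreducible matrix summands, reduce to checking $\rho_j(f_i)=\delta_{ij}I$, and establish this by character orthogonality. In fact your write-up is slightly more careful than the paper's, whose displayed computation silently replaces the matrix $\sum_g \chi_i(g^{-1})\rho_j(g)$ by the scalar matrix $\tfrac{1}{d_j}\sum_g \chi_i(g^{-1})\chi_j(g)\cdot I$ without stating the Schur's-Lemma-plus-trace argument you spell out explicitly.
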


\begin{proof}
Let $\rho_i$ be the representation which affords $\chi_i$. The algebra map $\rho_i : \mathbb{C}G\rightarrow M_{d_i}(\mathbb{C})$ is a projection onto the $i$th matrix summand in a decomposition of $\mathbb{C}G$. The $i$th matrix summand is a direct summand of $\mathbb{C}G$, as $\mathbb{C}G$ can be decomposed as a direct sum of matrix rings by Artin-Wedderburn Theorem. For any field $k$, the matrix ring $M_n(k)$ is indecomposable, so $Z(M_n(k))\cong k$ and the only nonzero idempotent in $k$ is $1$. The decomposition of $\mathbb{C}G$ as a direct sum of matrix rings is the unique decomposition of $\mathbb{C}G$ as a sum of indecomposable ring summands. Recall that a block idempotent is the identity in a direct summand and is zero in the others. The block idempotents $e_i$ in $\mathbb{C}G$ has the properties $\rho_i(e_i)=I$ and $\rho_j(e_i)=0$ if $i\neq j$. Each element $\frac{d_i}{|G|}\sum_{g\in G}\chi_i(g^{-1})g$ for $g\in G$ satisfies the properties so is a block idempotent, as 
\begin{align*}
\rho_j\left(\frac{d_i}{|G|}\sum_{g\in G}\chi_i(g^{-1})g\right)=& \frac{d_i}{|G|d_j}\sum_{g\in G}\chi_i(g^{-1})\chi_j(g)\cdot I \\
=&\frac{d_i}{d_j}\langle x_i, x_j\rangle\cdot I=\frac{d_i}{d_j}\delta_{i,j}\cdot I=\delta_{i,j}\cdot I.
\end{align*}
It follows that the elements $\frac{d_i}{|G|}\sum_{g\in G}\chi_i(g^{-1})g$ for $g\in G$ are the primitive central idempotent elements of $\mathbb{C}G$.
\end{proof}

Using \ref{centidemformula}, we want to find primitive orthogonal central idempotent elements of $kK$ where $K=\langle(1,2,3)\rangle$, a subgroup of $A_4$ with order $3$.
In $\mathbb{C}K$,
$$e_i=\frac{d_i}{|K|}\sum_{g\in K}\chi_i(g^{-1})g$$
where $1\leq i \leq r$.
The character table of $C_3$ is given in \ref{chartabofC3}. The primitive central orthogonal idempotents in $\mathbb{C}K$ are
\begin{align*}
{e_1}'=&1/3\big\{()+(1,2,3)+(1,3,2)\big\},\\
{e_2}'=&1/3\big\{()+e^{\frac{\pi i}{3}}(1,2,3)+e^{\frac{2 \pi i}{3}}(1,3,2)\big\},\\
{e_3}'=&1/3\big\{()+e^{\frac{2 \pi i}{3}}(1,2,3)+e^{\frac{\pi i}{3}}(1,3,2)\big\}.
\end{align*}
The modulo map $\mod 2: \mathbb{Z}[e^{\frac{\pi i}{3}}]\rightarrow k$ defined by $e^{\frac{\pi i}{3}}\mapsto \omega$ and $n\mapsto (n \mod 2)$ preserves some properties in $\mathbb{Z}[e^{\frac{\pi i}{3}}]$, so the central orthogonal idempotents in $kK$ are
\begin{align*}
{e_1}=&()+(1,2,3)+(1,3,2)\\
{e_2}=&()+\omega(1,2,3)+\omega^2(1,3,2)\\
{e_3}=&()+\omega^2(1,2,3)+\omega(1,3,2).
\end{align*}
$1=e_1+e_2+e_3$ where $e_i$ are orthogonal idempotents in $kA_4$. The decomposition of $kA_4$ as the direct sum of submodules corresponding to $1=e_1+e_2+e_3$ is
$$kA_4=kA_4e_1\oplus kA_4e_2\oplus kA_4e_3$$
where ${e_1}=()+(1,2,3)+(1,3,2)$, ${e_2}=()+\omega(1,2,3)+\omega^2(1,3,2)$, and ${e_3}=()+\omega^2(1,2,3)+\omega(1,3,2)$. As we already know $kA_4=Q_1\oplus Q_2\oplus Q_3$, $Q_i=kA_4e_i$.

\subparagraph{Decompositions of $kA_4$ into Blocks} 
From \ref{CartanmatrixA4}, the multiplicity of $T_i$ in $Q_j$ are nonzero for $1\leq i, j\leq 3$. By \ref{blocksandmultiplicity}, this implies that all $Q_i$ are in the same block. It follows that $kA_4$ has only one block, $kA_4$ itself. $kA_4$ is exactly the decomposition into a block.

\chapter{The Alternating 5-Group Algebra over Characteristic $2$}\chaptermark{Group Algebra $kA_5$}

\section{Simple $kA_5$-modules}\label{simplekA5mod}

As projective indecomposable modules biject with simple modules, simple modules are important for decomposing $kG$ into a direct sum of projective indecomposable modules. To find all simple $kG$-modules, the next theorem is introduced which tells you that the number of simple $kG$-modules is related to the number of conjugacy classes of $G$. Before starting it, we have a concept called $p$-regular. \index{$p$-regular}For a group $G$, an element $g\in G$ is called \textit{$p$-regular}\index{$p$-regular} if $\ord (g)$ is not divisible by $p$. The following theorems and proofs are based on \cite[Theorem 1.3.2]{Alperin_1986} and \cite{farnsteiner2006simple}. 

\paragraph{The Number of Simple Modules of $kG$}
We first consider the case when $kG$ is semisimple.
\begin{Theorem}\label{semisimplesimpleconjnumber}
Suppose $kG$ is semisimple. Then the number of simple $kG$-modules is the number of ($p$-regular) conjugacy classes of $G$.
\end{Theorem}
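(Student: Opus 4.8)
The plan is to derive the statement from the Artin--Wedderburn decomposition of $kG$ together with a dimension count of the center $Z(kG)$, which admits two descriptions: one in terms of matrix blocks (hence simple modules) and one in terms of conjugacy class sums.

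First I would invoke the Artin--Wedderburn Theorem \ref{Artinwedderburnthm}: since $kG$ is semisimple and $k$ is algebraically closed, $kG \cong M_{n_1}(k) \oplus \cdots \oplus M_{n_r}(k)$, and each matrix summand $M_{n_i}(k)$ carries exactly one isomorphism class of simple module. Hence the number of simple $kG$-modules equals $r$, the number of matrix blocks in this decomposition.

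Next I would compute $\dim_k Z(kG)$ in two ways. On the one hand, $Z(M_{n_i}(k))$ is the ring of scalar matrices, which is isomorphic to $k$, so $Z(kG) \cong k^r$ and $\dim_k Z(kG) = r$. On the other hand, I claim the conjugacy class sums $\widehat{C} = \sum_{g \in C} g$, as $C$ ranges over the conjugacy classes of $G$, form a $k$-basis of $Z(kG)$: writing a general element as $z = \sum_{g\in G} a_g g$, the condition $hzh^{-1}=z$ for all $h\in G$ becomes $\sum_g a_g (hgh^{-1}) = \sum_g a_g g$, and comparing coefficients shows $z$ is central precisely when $g\mapsto a_g$ is constant on each conjugacy class. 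The class sums are visibly linearly independent, so they are a basis and $\dim_k Z(kG)$ equals the number of conjugacy classes of $G$. Combining the two counts gives that $r$ equals the number of conjugacy classes.

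Finally I would reconcile ``conjugacy classes'' with ``$p$-regular conjugacy classes.'' Semisimplicity of $kG$ (via Maschke's Theorem) forces $p \nmid |G|$; then by Lagrange's Theorem no element of $G$ has order divisible by $p$, so every element is $p$-regular and every conjugacy class is a $p$-regular class. Therefore $r$ equals the number of $p$-regular conjugacy classes, as claimed. The main obstacle is the verification that the class sums span $Z(kG)$ — this is the crux of the argument, though it reduces to the routine coefficient comparison sketched above; a secondary point to state carefully is that the parenthetical ``$p$-regular'' is not vacuous but simply coincides, in the semisimple case, with the set of all conjugacy classes, anticipating the general theorem that follows.
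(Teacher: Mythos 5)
Your proposal is correct and follows essentially the same route as the paper: both count $\dim_k Z(kG)$ once via the Artin--Wedderburn matrix-block decomposition (giving the number of simples) and once via the conjugacy class sums (giving the number of conjugacy classes), then invoke Maschke to observe that semisimplicity forces $p \nmid |G|$ so every conjugacy class is automatically $p$-regular. No substantive difference.
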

\begin{proof}
When an algebra $A$ is semisimple, Artin-Wedderburn's theorem gives a decomposition of $A$ as a direct sum of matrix algebras. If $A$ is a direct sum of subalgebras, then the center $Z(A)$ is the direct sum of each centers. If we write $kG=A_1\oplus \cdots \oplus A_r$ for matrix algebras $A_i$ where $r$ is the number of simple $kG$-modules, then $Z(kG)=Z(A_1)\oplus\cdots\oplus Z(A_r)$. For any size $n$, the centre of a matrix algebra consists of scalar matrices, so if we define $e_i$ as the unit of $A_i$, then $A_i=ke_i$. This tells us that $Z(kG)=ke_1 \oplus \cdots \oplus ke_r$. $\{e_1, \cdots, e_r\}$ is a basis for $Z(kG)$, and $\dim Z(kG)=r$. On the other hand, let $C_1,\cdots,C_s$ be conjugacy classes of $G$ and $\gamma_1,\cdots, \gamma_s$ be sums of each conjugacy classe elements, (i.e., $\gamma_i=\sum_{c\in C_i}c$). By Maschke's Theorem (see \ref{Maschke'sthm}), $|G|$ is not divisible by $p$. This implies all conjugacy classes are $p$-regular, so the number of $p$-regular conjugacy classes of $G$ is the number of conjugacy classes of $G$. To prove $r=s$, we show that $\{\gamma_1, \cdots, \gamma_s\}$ is a basis of $Z(kG)$. For any $g\in G$, $g^{-1}\gamma_ig=\gamma_i$, so $\gamma_i\in Z(kG)$ for all $1\leq i \leq s$. The set is linearly independent since $\gamma_1, \cdots, \gamma_s$ are in different conjugacy classes. Let $a=\sum \alpha_x x$ in $Z(kG)$. As $a=g^{-1}ag$ and $\alpha_{gxg^{-1}}$ is the coefficient of $x$ in $g^{-1}ag$, $\alpha_{gxg^{-1}}=\alpha_x$ for all $x$. This shows that $a$ is a linear combination of $\gamma_1,\cdots, \gamma_s$. We've shown $r=s$, that is, the number of simples is exactly the number of conjugacy classes.
\end{proof}

The theorem is proved by using the center of $kG$, $Z(kG)$. When $kG$ is not necessarily semisimple, the quotient $kG/\Rad kG$ is semisimple by \ref{A/RadA semisimple}. The centre $Z(kG/\Rad kG)$ is a submodule of the quotient. Instead, the concept derived algebra is introduced. For any algebra $A$, the \textit{derived algebra} $[A,A]$ of $A$ is the subspace of $A$ spanned by all elements $ab-ba$ for $a,b\in A$. Explicitly, $[A,A]=\langle ab-ba: a, b\in A\rangle.$ To prove the theorem in the similar way with the quotient $kG/[kG,kG]$, we dualise the concept of the quotient space with the centre: If $A$ is a matrix algebra, then $[A,A]$ is a subspace of codimension $1$ by the following. $\tr(AB-BA)=0$ for all $A,B\in M_n(k)$, so $[M_n(k), M_n(k)]$ consists of all matrices of trace zero. Suppose $kG$ is semisimple. From the decomposition $kG=A_1\oplus\cdots\oplus A_r$ where $A_i$ are matrix subalgebras, we know that $[kG,kG]$ is of codimension $r$. Let $g_1,\cdots, g_s$ be the sums of conjugacy class elements of $kG$. Then $g_1+[kG,kG], \cdots, g_s+[kG,kG]$ are a basis for $kG/[kG,kG]$. By \ref{semisimplesimpleconjnumber} and the duality, $r=s$. Now we prove the following theorem, the general case of $kG$.
 
\begin{Theorem}\label{numofsimpleisequaltonumofpregularconjclasses}
Let $k$ be a field with characteristic $p$.
The number of simple $kG$-modules is precisely the number of $p$-regular conjugacy classes of $G$.
\end{Theorem}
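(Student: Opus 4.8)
The plan is to follow the strategy already begun for the semisimple case in Theorem~\ref{semisimplesimpleconjnumber}, but now working modulo $\rad kG$ and replacing the commutator subspace $[kG,kG]$ by a slightly larger subspace adapted to characteristic $p$. For a finite-dimensional $k$-algebra $A$ set
\[
T(A)=\{\,a\in A : a^{p^m}\in [A,A]\ \text{for some } m\geq 0\,\}.
\]
The whole argument then reduces to two claims: (1) $T(kG)$ is a $k$-subspace and $\dim_k kG/T(kG)$ equals the number of simple $kG$-modules; and (2) $\dim_k kG/T(kG)$ equals the number of $p$-regular conjugacy classes of $G$. Combining (1) and (2) gives the theorem.

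For claim (1) the key input is the characteristic-$p$ identity $(a+b)^{p}\equiv a^{p}+b^{p}\pmod{[A,A]}$, valid in any associative $k$-algebra because $(a+b)^p-a^p-b^p$ is a sum of iterated commutators and every iterated commutator lies in $[A,A]$; one also needs the consequence that $[A,A]$ is closed under $p$-th powers, together with $(\lambda a)^{p^m}=\lambda^{p^m}a^{p^m}$. These facts show that $T(A)$ is a subspace containing $[A,A]$, and that the ``$p$-power map'' $\phi\colon A/[A,A]\to A/[A,A]$, $\bar a\mapsto\overline{a^{p}}$, is a well-defined additive, $p$-semilinear map whose eventual kernel is exactly $T(A)/[A,A]$. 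Since $\rad kG$ is nilpotent it lies in $T(kG)$, so $kG/T(kG)\cong\bar A/T(\bar A)$ with $\bar A=kG/\rad kG$ semisimple (here one checks $T(\bar A)$ is the image of $T(kG)$, using that $[kG,kG]$ maps onto $[\bar A,\bar A]$ under the surjective algebra map). Finally, by Artin--Wedderburn (Theorem~\ref{Artinwedderburnthm}) and algebraic closedness of $k$, $\bar A\cong\bigoplus_{i=1}^{r}\M_{n_i}(k)$ where $r$ is the number of simples; on each matrix block $[\M_{n}(k),\M_{n}(k)]$ is the codimension-$1$ subspace of trace-zero matrices, and since $\tr(x^{p})=(\tr x)^{p}$ the induced map on $\bar A/[\bar A,\bar A]\cong k^{r}$ is the coordinatewise Frobenius, hence injective, so $T(\bar A)=[\bar A,\bar A]$ and $\dim_k kG/T(kG)=r$.

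For claim (2) we work inside $kG/[kG,kG]$, which as in Theorem~\ref{semisimplesimpleconjnumber} has a basis given by the images $\bar g$ of group elements, with $\bar g=\bar h$ if and only if $g$ and $h$ are conjugate. Writing $kG/[kG,kG]=V\oplus W$ with $W=T(kG)/[kG,kG]$ the eventual kernel of $\phi$ and $V=\bigcap_{n}\phi^{n}(kG/[kG,kG])$ the eventual image (on which $\phi$ is bijective), we get $\dim_k kG/T(kG)=\dim_k V$. Since $k$ is perfect, for $N$ large $\phi^{N}$ sends $\bar g$ to $\overline{g^{p^{N}}}$ and $V=\langle\,\overline{g^{p^{N}}}:g\in G\,\rangle$; using the decomposition $g=g_{p}g_{p'}$ into $p$-part and $p'$-part and choosing $N$ large enough that $g_{p}^{p^{N}}=1$ for all $g$ and that $p^{N}\equiv 1$ modulo the exponent of $G$, one gets $g^{p^{N}}=g_{p'}$. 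Hence $V=\langle\,\overline{g_{p'}}:g\in G\,\rangle=\langle\,\bar h:h\in G\ p\text{-regular}\,\rangle$, and the vectors $\bar h$ as $h$ runs over representatives of the $p$-regular classes are distinct basis vectors, so $\dim_k V$ is the number of $p$-regular conjugacy classes of $G$.

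The main obstacle is claim (1) --- specifically getting the characteristic-$p$ combinatorics right: that $T(A)$ genuinely is a linear subspace (requiring the identity $(a+b)^{p}\equiv a^{p}+b^{p}\pmod{[A,A]}$ and the less obvious fact that $[A,A]$ is stable under $p$-th powers, which lets $\phi$ be defined on $A/[A,A]$) and that passing to $\bar A=kG/\rad kG$ does not change $\dim_k kG/T(kG)$. Once $T(A)$ is identified with the eventual kernel of the $p$-power map on $A/[A,A]$, claim (2) is essentially the bookkeeping already foreshadowed in the semisimple case, and the trace computation in claim (1) is immediate from Artin--Wedderburn.
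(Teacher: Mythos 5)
Your proof is correct and, at bottom, it is the same argument as the paper's: both identify the subspace $\{\,a\in kG : a^{p^m}\in[kG,kG]\ \text{for some }m\,\}$ (your $T(kG)$, the paper's $S_0$), both prove its codimension equals the number of simple modules via the trace-zero description of $[\M_n(k),\M_n(k)]$ after Artin--Wedderburn, and both count the codimension a second time via the $p$-part/$p'$-part decomposition $g=g_pg_{p'}$. The organizational difference is worth noting. The paper fixes $S=T+\rad kG$ from the outset, proves $S=S_0$ by a direct two-way inclusion, and then verifies by hand that the $p$-regular class sums $x_1+S,\dots,x_s+S$ form a basis of $kG/S$. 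You instead package the endgame as a Fitting decomposition $kG/[kG,kG]=V\oplus W$ for the $p$-semilinear ``Frobenius-type'' map $\phi(\bar a)=\overline{a^p}$, with $W$ the eventual kernel (your $T(kG)/[kG,kG]$) and $V$ the eventual image; you then show $V$ is spanned by the images of $p$-regular class representatives. This buys a cleaner conceptual frame --- the two counts become ``the eventual kernel of a Frobenius-semilinear operator has complementary dimension to the number of matrix blocks'' and ``the eventual image is the $p$-regular span'' --- at the modest cost of invoking the Fitting lemma for semilinear maps over a perfect field (you correctly flag perfectness, since $\phi$ is only $p$-semilinear and both the rank--nullity bookkeeping and the twist $V\cong V^{(p)}$ rely on it). You also streamline the reduction to $\bar A=kG/\rad kG$ by checking $T(\bar A)=T(kG)/\rad kG$ directly, which replaces the paper's explicit manipulation of $S=T+\rad kG$; the check is correct, using nilpotence of $\rad kG$ and stability of $[kG,kG]$ under $p$th powers. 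No gap; the two proofs differ in packaging, not in substance.
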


\begin{proof}
Let $T$ be $[kG, kG]$ and $S=T+ \Rad kG$, the sum of two subalgebras $T$ and $\Rad kG$. Then $S/\Rad kG = [kG/\Rad kG, kG/\Rad kG]$ and $S$ is of codimension in $kG$ equal to the number of matrix summands of $kG/\Rad kG$. By \ref{Artinwedderburnthm}, it is the number of simple $kG$-modules. Let $x_1, \cdots, x_s$ be sums of the $p$-regular conjugacy classes of $G$ and $x_{s+1},\cdots, x_r$ be sums of the remaining conjugacy classes. Then $x_1+T, \cdots, x_r+T$ are a basis for $kG/T$ by the following. If $x$ is conjugate to $x_i$, that is, $x=gx_ig^{-1}$ for some $g\in G$, then $x_i-x=x_i-gx_ig^{-1}=g^{-1}\cdot gx_i -gx_i\cdot g^{-1}\in T$. This implies that $x_i+T=x+T$ for all $x\thicksim_c x_i$. As $G$ spans $kG$, the elements $x_1+T, \cdots, x_r+T$ span $kG/T$. For linear independence, define $\varphi_i$ for $1\leq i\leq r$ as the linear function on the vector space $kG$ such that $\varphi_i$ has value 1 on elements of $G$ conjugate to $x_i$ and vanishes on all the other group elements. For any $g,h\in G$, $\varphi_i(gh-hg)=\varphi_i(g(hg)g^{-1}-hg)=0$, so the image of $\varphi_i$ on $T$ is zero. If $\sum \alpha_j(x_j+T)=0$, then $\sum \alpha_jx_j\in T$ and $0=\varphi_i(\sum\alpha_jx_j)=\alpha_i$ for all $i=1,\cdots,r$.

Next we prove that $S=S_0$ where $S_0=\{a\in kG: a^{p^i}\in T \text{ for some integer } i\}.$ For $a\in S_0$, define $\bar{a}$ as the coset of $a$, that is, $a+\rad kG$. Then $\bar{a}^{p^i}$ lies in $[kG/\Rad kG, $ $kG/\Rad kG]$. In the decomposition of $kG/\Rad kG$ into a direct sum of matrix algebras, each component of $\bar{a}$ has $p^i$th power an element of trace zero. Over characteristic $p$, the trace of the $p$th power an element of a matrix is the $p$th power of its trace, since the trace of a matrix is the sum of its eigenvalues. This implies the trace of $\bar{a}$ is zero, so $\bar{a}\in [kG/\Rad kG, kG/\Rad kG]=S/\rad kG$ and $a\in S$. Conversely, $T\subseteq S_0$ and $\Rad kG\subseteq S_0$ as each element of $\Rad kG$ is nilpotent. It remains to show that $S_0$ is closed under addition. Before this, we note that $T$ has the following properties.
\begin{itemize}
\item[(i)]
If $a,b\in kG$, then $(a+b)^p \equiv a^p+b^p \mod T$.

If $a_1,\cdots, a_p\in kG$, then $a_1a_2\cdots a_p+a_2\cdots a_pa_1= a_1a_2\cdots a_p + a_1a_2\cdots a_p +((a_2\cdots a_p)a_1-a_1(a_2\cdots a_p)) \equiv  2a_1\cdots a_p  \mod T.$
So $a_2\cdots a_pa_1 \equiv a_1\cdots a_p\mod T$. It is possible to rearrange a multiple of $p$ elements in modulo $T$. This shows that each term of $(a+b)^p$ except $a^p$ and $b^p$ has coefficient divisible by $p$ in mod $T$, so it is zero and the result follows.
\item[(ii)]
$T^p \subseteq T$.

By (i), $(ab-ba)^p\equiv (ab)^p-(ba)^p=a(ba\cdots b)-(ba\cdots b)a= 0 \mod T$, for all $a, b\in kG$. So $(ab-ba)^p \in T$.
\end{itemize}

Let $a,b\in S_0$. By (ii), there exists a primitive integer $n$ such that both $a^{p^n}, b^{b^n}\in T$.  $(a+b)^{p^n} \equiv a^{p^n}+b^{p^n}=0 \mod T$. This shows $a+b \in S_0$. $S_0$ is closed under addition and we proved $S=S_0$.

Now we claim that $x_1+S,\cdots, x_s+S$ form a basis for $kG/S$. Let $g\in G$ and express $g=ux$ where $u^{p^i}=1$ for some $i \geq 0$ and $p$-regular $x$ such that $ux=xu$. $g$ can be uniquely decomposed in such a way as the cyclic subgroup $\langle g \rangle$ of $G$ and $G$ is the direct product of Sylow subgroups. In modulo $ T$, $((u-1)x)^{p^i}=u^{p^i}x^{p^i}-x^{p^i}=0$, so $(u-1)x=0$ by (i). Then $g=(u-1)x+x \equiv x$, $g^{p^i}\equiv x^{p^i}$, so $(g-x)^{p^i}\equiv 0$ and $g-x\in S_0=S$. Apply $g-x\in S$ to the elements $x_j$ for $j>s$. That is, $x_j\mod S$ is a $p$-regular element which is one of $x_1,\cdots, x_s$ in modulo $T$. This shows the set $\{x_1+S,\cdots, x_s+S\}$ spans $kG/S$.  For linear independence, suppose $\alpha_1x_1+\cdots+\alpha_sx_s\in S$ where $\alpha_j\in k$. By $S=S_0$, $\alpha_1^{p^i}x_1^{p^i}+\cdots+\alpha_x^{p^i}x_s^{p^i}=(\alpha_1x_1+\cdots +\alpha_sx_s)^{p^i}\in T$ for some $i \geq 0$.
$$ \alpha_1^{p^i}x_1^{p^i}+\cdots+\alpha_x^{p^i}x_s^{p^i}=0 \mod T.$$

$x_j^{p^i}=x_j$ modulo $T$ by the following. The order of $G$ can be expressed as the product of a power of $p$ and an integer $m$ with $(p,m)=1$. Choose integers $a$ and $b$ with $ap^i+bm=1$. As $x_j$ are p-regulars, $x_j^m=1$. $({x_j}^{p^i})^a=x_j$ and the integer $a$ is independent of $j$. The equation is then 
$$\alpha_1^{p^i}x_1+\cdots+\alpha_x^{p^i}x_s = 0 \mod T.$$ As $x_1, \cdots, x_s$ are linearly independent in modulo $T$, each $\alpha_j^{p^i}=0 \mod T$, so $\alpha_j=0$. We proved the claim. This shows that $s$ is the codimension of $S$, and the theorem follows as desired. 
\end{proof}

\paragraph{The Number of Simples for $kA_5$}
By \ref{numofsimpleisequaltonumofpregularconjclasses}, the number of simples is the number of $2$-regular conjugacy classes of $A_5$. From $S_5$, we can deduce the conjugacy classes of $A_5$. The conjugacy classes of $S_5$ are classified by their cycle types. Choose representatives of those as $1, (1,2), (1,2)(3,4), (1,2,3), $ $(1,2,3)(4,5), (1,2,3,4)$ and $(1,2,3,4,5)$. The alternating group $A_5$ consists of even permutations in $S_5$, so some representatives of the conjugacy classes are $1,$ $(1,2)(3,4),(1,2,3)$ and $(1,2,3,4,5)$.
\subparagraph{Conjugacy Classes of $A_n$}
In the sign map $\sign: S_n \rightarrow \{\pm 1\}$, the image of $A_n$ is $\{1\}$. $\ker(\sign)=A_n$, so $[S_n:A_n]=2$. Fix $g\in G$. Consider the restriction map $\sign|_{C_{S_n}(g)}: C_{S_n}(g) \rightarrow \{\pm 1\}$.
The image of $\sign|_{C_{S_n}(g)}$ is either $\{\pm 1\}$ or $\{1\}$.
It maps $C_{A_n}(g)$ onto $\{1\}$, so $\ker(\sign|_{C_{S_n}(g)})=C_{A_n}(g)$. It follows that $[C_{S_n}(g): C_{A_n}(g)]$ is either $1$ or $2$.
\begin{itemize}
\item[(1)] Assume $|C_{S_n}(g)|=2|C_{A_n}(g)|$. Then, by \ref{ordofconjugacyclassesofGisordofG/centerofg},
$$\frac{|g^{S_n}|}{|g^{A_n}|}=\frac{|S_n|}{|A_n|}\cdot\frac{|C_{A_n}(g)|}{|C_{S_n}(g)|}=1.$$
i.e., $|g^{S_n}|=|g^{A_n}|$. As $g^{A_n}\leq g^{S_n}$, $g^{S_n}=g^{A_n}$.

\item[(2)] When $|C_{S_n}(g)|=|C_{A_n}(g)|$, by \ref{ordofconjugacyclassesofGisordofG/centerofg},
$$\frac{|g^{S_n}|}{|g^{A_n}|}=\frac{|S_n|}{|A_n|}\cdot\frac{|C_{A_n}(g)|}{|C_{S_n}(g)|}=2.$$
i.e., $|g^{S_n}|=2|g^{A_n}|$. $g^{S_n}=g^{A_n}\dot{\cup}(g^x)^{A_n}$ for any $x \in S_n\backslash A_n$.
\end{itemize}
If $g^{(1,2)}\neq g$, then $g$ is in case $(2)$, so the conjugacy class $g^{S_n}$ is divided into two conjugacy classes $g^{A_n}$ and $(g^x)^{A_n}$ with equal size. Otherwise, $g^{A_n}=g^{S_n}$.

\subparagraph{2-Regular Conjugacy Classes of $A_5$}
$(1,2)(3,4)$ commutes with $(1,2)$, so $(1,2)$ $(3,4)^{A_5}$ is ${(1,2)(3,4)}^{S_5}$. $(1,2,3)^{(1,2)}$ $=(1,2,3)$, so $(1,2,3)^{A_5}=(1,2,3)^{S_5}$.
However, $(1,2,3,4,5)^{(1,2)}=(1,3,5,4,2)$, so $(1,2,3,4,5)^{A_5}$ and $(1,3,4,5,2)^{A_5}$ are different conjugacy classes. $A_5$ has $5$ conjugacy classes with representatives $1$, $(1,2)(3,4)$, $(1,2,3)$, $(1,2,3,4,5)$ and $(1,3,4,5,2)$. $(1,2)(3,4)$ is not $2$-regular, so $A_5$ has four $2$-regular conjugacy classes. By \ref{numofsimpleisequaltonumofpregularconjclasses}, $kA_5$ has four simple modules.

\paragraph{Simple $kA_5$-modules}\label{S2}\label{S3}\label{S4}
Let $S_1=k_{A_5}$, $S_2$, $S_3$ and $S_4$ be the simple $kA_5$-modules. For $S_2$, identify points in $\{1,\cdots, 5\}$ with $1$-dimensional subspace of ${\mathbb{F}_4}^2$ by following: $1$ with $\left\{ a\big[\begin{smallmatrix} 1\\1   \end{smallmatrix}\big]: a\in\mathbb{F}_4\right\}$, $2$ with $\left\{ a \big[\begin{smallmatrix} 1\\ \omega \end{smallmatrix}\big] : a\in\mathbb{F}_4\right\}$, $3$ with $\left\{ a \,\big[\begin{smallmatrix} 1\\ {\omega}^2   \end{smallmatrix}\big] : a\in\mathbb{F}_4\right\}$, $4$ with $\big\{ a \big[\begin{smallmatrix} 0\\1   \end{smallmatrix}\big]: a\in\mathbb{F}_4\big\}$ and $5$ with $\left\{ a \big[\begin{smallmatrix} 1\\0   \end{smallmatrix}\big]: a\in\mathbb{F}_4\right\}$. Then $A_5 \cong SL_2(\mathbb{F}_4)$.  

By extending from $\mathbb{F}_4$ to $\mathbb{F}_4$-action on $S_2$, the 2-dimensional simple module $S_2$ is defined as the irreducible representation $S_2: A_5 \rightarrow SL_2(\mathbb{F}_4)\subseteq GL_2(k)$ where $(1,2)(3,4) \mapsto \big[\begin{smallmatrix} 0 & 1 \\  1 & 1 \end{smallmatrix}\big]$, $(1,2,3) \mapsto \big[\begin{smallmatrix}  \omega &0 \\ 0 & {\omega}^2 \end{smallmatrix}\big]$, and $(1,3,5) \mapsto \big[\begin{smallmatrix}  1 &0 \\ \omega & 1 \end{smallmatrix}\big]$. $S_2$ is simple because if there exists a non-trivial submodule, then it is 1-dimensional so is an eigenspace of some eigenvalue $\lambda$. But there is no common eigenvalue for all, so this leads to a contradiction. The other 2-dimensional simple module $S_3$ is obtained by substituting $\omega^2$ for $\omega$ in $S_2$, that is, $S_3: A_5 \rightarrow SL_2(\mathbb{F}_4)$ where $(1,2)(3,4) \mapsto \big[\begin{smallmatrix} 1 &\omega^2\\ 0 & 1 \end{smallmatrix}\big]$
, $(1,2,3) \mapsto \big[\begin{smallmatrix}  {\omega}^2 &0 \\ 0 & \omega \end{smallmatrix}\big]$, $(1,3,5) \mapsto \big[\begin{smallmatrix}  0 &1 \\ 1 & 1 \end{smallmatrix}\big]$. By the structural symmetry with $S_2$, $S_3$ is simple by the same reason as $S_2$.

Let $W$ be a permutation module from action on $\{1,2,3,4,5\}$. To avoid any confusion, let ${S_5}'$ be the symmetric group of the set. Then $W$ is a representation $W: {S_5}' \rightarrow GL_5(k)$ where the image of each element in $S_5'$ has every entry either 0 or 1. By considering eigenspaces, a basis of $W$ is $\{(1,0,0,0,0)$, $(0,1,0,0,0)$, $(0,0,1,0,0)$, $(0,0,0,1,0)$, $(0,0,0,0,1)\}$, and this implies $W$ is free. The 4-dimensional simple module $S_4$ is a submodule of $W$ with a basis $\{(1,1,0,0,0)$, $(0,1,1,0,0)$, $(0,0,1,1,0)$, $(0,0,0,1,1)\}$. A basis for $k_{A_5}$ is $\{(1,1,1,1,1)\}$, so $W$ is decomposed as $W= k_{A_5}\oplus S_4$. This shows that $S_4$ is a direct summand of the free module $W$, so it is projective. By \ref{Simpleindecomp}, it is indecomposable. $S_4$ is a projective indecomposable $kA_5$-module, so we can set a projective indecomposable module $P_4=S_4$ without loss of generality. We've found all simple $kA_5$-modules.

\section{Indecomposable $kA_5$-modules}

Like we did in section \ref{IndecompM_iandPM_iIM_i}, in order to find projective indecomposable modules of $A_5$, we want to find some indecomposable modules first. As we know the radical series of all simple $kA_4$-modules $T_j$ and simple $kA_5$-modules $S_i$, the radical series of the induced module ${T_j}^{A_5}$ on $T_j$ to $A_5$ is good to be found. We use Frobenius reciprocity on $T_j$ and $S_i$ to find the radical series structure of ${T_j}^{A_5}$. It is deduced by investigating $\Hom_{kA_4}(S_i, T_j^{A_5})$ and $\Hom_{kA_5}(T_j^{A_5}, S_i)$. In the context, $A_4$ is considered as a subgroup of $A_5$, as it is isomorphic to a subgroup of $A_5$ consisting of permutations stabilising one fixed point in $\{1,2,3,4,5\}$. Overall discussion and results in this section refers to \cite{C.W.Eaton}.

\begin{Theorem}[Frobenius Reciprocity for Modules]
Let $H$ be a subgroup of a group $G$. Then, for any $kG$-module $V$ and $kH$-module $W$,
$$\begin{matrix} \Hom_{kG}(W^G, V) \cong \Hom_{kH}(W, (V)_H) & \text{and} & \Hom_{kG}(V, W^G) \cong \Hom_{kH}((V)_H, W)\end{matrix}$$
as vector spaces.
\end{Theorem}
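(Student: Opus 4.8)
The plan is to establish the two adjunction isomorphisms by producing explicit, mutually inverse linear maps, using the concrete description of the induced module $W^G = \bigoplus_{i=1}^n x_i W$ where $x_1,\dots,x_n$ are coset representatives for $G/H$ (with $x_1 = 1$, say). The two statements are genuinely different — one is the right adjunction (induction is right adjoint to restriction) and one is the left adjunction — so each needs its own argument, though they run in parallel.

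For the first isomorphism $\Hom_{kG}(W^G, V) \cong \Hom_{kH}(W, (V)_H)$, I would proceed as follows. Given a $kG$-homomorphism $\phi: W^G \to V$, restrict it along the inclusion $W \hookrightarrow W^G$ (identifying $W$ with the summand $x_1 W = 1\cdot W$) to get a map $W \to V$; since $H$ acts on $W \subseteq W^G$ by the original $H$-action, this restriction is a $kH$-homomorphism $W \to (V)_H$. This defines the forward map. Conversely, given $\psi: W \to (V)_H$ in $\Hom_{kH}(W,(V)_H)$, define $\tilde\psi: W^G \to V$ on the summand $x_i W$ by $\tilde\psi(x_i w) = x_i \psi(w)$ and extend linearly. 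The key checks are: (a) $\tilde\psi$ is well-defined independent of the choice of coset representatives — this uses that $\psi$ is $H$-linear, so replacing $x_i$ by $x_i h$ gives $x_i h \psi(w) = x_i \psi(hw)$, matching the action on $W^G$; (b) $\tilde\psi$ is $kG$-linear — given $g \in G$, one has $g x_i = x_{\sigma(i)} h_i$ for a permutation $\sigma$ and elements $h_i \in H$, and one computes $\tilde\psi(g x_i w) = x_{\sigma(i)} h_i \psi(w) = x_{\sigma(i)}\psi(h_i w) = g\,\tilde\psi(x_i w)$, again invoking $H$-linearity of $\psi$; (c) the two constructions are mutually inverse, which is immediate from the definitions once well-definedness is in hand. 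Both maps are visibly $k$-linear, so this yields the vector space isomorphism.

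For the second isomorphism $\Hom_{kG}(V, W^G) \cong \Hom_{kH}((V)_H, W)$, I would use the projection $\pi: W^G \to W$ onto the summand $x_1 W = W$; this is $kH$-equivariant. Given $\phi: V \to W^G$ of $kG$-modules, compose with $\pi$ to get $\pi\phi: (V)_H \to W$, a $kH$-map. Conversely, given $\rho: (V)_H \to W$ in $\Hom_{kH}((V)_H, W)$, define $\hat\rho: V \to W^G$ by $\hat\rho(v) = \sum_{i=1}^n x_i\, \rho(x_i^{-1} v)$. The checks mirror the first case: well-definedness (independence of coset representatives) and $kG$-linearity both follow from the $H$-equivariance of $\rho$ together with the bookkeeping $g x_i = x_{\sigma(i)} h_i$; that $\phi \mapsto \pi\phi$ and $\rho \mapsto \hat\rho$ are mutually inverse is a short computation using $\pi(x_i w) = 0$ for $i \ne 1$.

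The main obstacle — really the only nontrivial point — is the verification of $kG$-linearity of the constructed maps $\tilde\psi$ and $\hat\rho$, since this is where the interplay between the $G$-action permuting the summands $x_i W$ and the $H$-action on $W$ must be handled carefully; the rest is formal. One could alternatively derive the second isomorphism from the first via duality, using that $(W^G)^* \cong (W^*)^G$ for finite-dimensional modules (a fact of the same flavour as Lemma~\ref{dualityproj} and used elsewhere in the paper) together with $\dim$-counting from \eqref{inddim}, but giving the direct construction is cleaner and self-contained. I would present the direct proof.
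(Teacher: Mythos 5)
Your proposal is correct and matches the paper's approach: the maps you build for $\Hom_{kG}(W^G, V) \cong \Hom_{kH}(W, (V)_H)$ are exactly the paper's $f \mapsto (v\mapsto f(1\otimes v))$ and $g \mapsto (b\otimes v \mapsto bg(v))$, merely phrased in the $\bigoplus_i x_iW$ description of $W^G$ rather than tensor notation (which fits the paper's own definition of induction better anyway). You are more complete than the paper, which stops after exhibiting the two maps for the first isomorphism and says nothing about the second; your construction via the projection $\pi: W^G \to x_1W$ and the sum $\hat\rho(v)=\sum_i x_i\rho(x_i^{-1}v)$ supplies the missing half, and your discussion of why $kG$-linearity is the crux is the right emphasis.
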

\begin{proof}
The mutually inverse isomorphisms of $ \Hom_{kG}(W^G, V) \cong \Hom_{kH}(W, (V)_H)$ are
\begin{align*}
&\quad\quad\quad\quad\quad\quad\quad\; f \mapsto (v\mapsto f(1\otimes v)) \\
\quad \text{and} \quad&\quad (b\otimes v \mapsto bg(v)) \leftarrow g.
\end{align*}
\end{proof}

For every simple $kA_5$-modules $S_i$ and $kA_4$-modules $T_j$,
\begin{align}\label{SiTj}
\dim_k(\Hom_{kA_5}(S_i, T_j^{A_5}))=&\dim_k(\Hom_{kA_4}((S_i)_{A_4}, T_j))\\
\label{TjSi}\dim_k(\Hom_{kA_5}(T_j^{A_5}, S_i))=&\dim_k(\Hom_{kA_4}(T_j, (S_i)_{A_4})).
\end{align}
The restrictions on $S_i$ to $A_4$ are
\begin{align}\label{S1A4}
(S_1)_{A_4}=&(S_4)_{A_4}=T_1=k_{A_4},\\
\label{S1A4'}(S_2)_{A_4}=&\begin{smallmatrix} T_2 \\ T_3 \end{smallmatrix}\\
(S_3)_{A_4}=&\begin{smallmatrix} T_3 \\ T_2 \end{smallmatrix}
 \end{align}
 where ${k_{A_4}}=(W)_{A_4}=(S_1\oplus S_4)_{A_4}$, $(S_2)_{A_4}$ is $(1,2,3) \mapsto \big[\begin{smallmatrix}  \omega &0 \\ 0 & {\omega}^2 \end{smallmatrix}\big]$ and $(S_3)_{A_4}$ is $(1,2,3) \mapsto \big[\begin{smallmatrix} {\omega}^2 &0 \\ 0 & \omega \end{smallmatrix}\big]$ in \ref{S3}. $(S_2)_{A_4}$ is uniserial and has the unique quotient isomorphic to $T_2$ and the unique submodule isomorphic to $T_3$. Similarly, $(S_3)_{A_4}$ is uniserial and has the unique quotient $T_3$ and the unique submodule $T_2$.

\subparagraph{Induced Simple $kA_4$-Modules, $T_i^{A_5}$}
The following results refer to \cite{C.W.Eaton}. We want to find $T_i^{A_5}$. $T_1^{A_5}$ is decomposable by $T_1^{A_5}={k_{A_4}}^{A_5}=W=k_{A_5} \oplus S_4=S_1\oplus S_4$. To find submodules of $T_2^{A_5}$, we consider $\Hom_{kA_5}(S_i,T_2^{A_5})$.
By the relation in \ref{S1A4}, $\Hom_{kA_4}((S_1)_{A_4}, T_2)=\Hom_{kA_4}((S_4)_{A_4}, T_2)=\Hom_{kA_4}(T_1, T_2)$, so all zero by Schur's Lemma. A nonzero map $\begin{smallmatrix} T_2 \\ T_3 \end{smallmatrix} \rightarrow T_2$ exists only when $T_3$ is mapped into zero, so $\dim_k\Hom_{kA_4}((S_2)_{A_4}, T_2)=1$. $\Hom_{kA_4}((S_3)_{A_4}, T_2)=0$ as no homomorphism $\begin{smallmatrix} T_3 \\ T_2 \end{smallmatrix} \rightarrow T_2$ exists that maps $T_3$ into $T_2$. By the equation \ref{SiTj},
\begin{displaymath}
    \dim_k\Hom_{kA_5}(S_i, T_2^{A_5})=
     \left\{
    \begin{array}{lr}
      1  \quad\quad i=2\\
      0  \quad\quad i=1,3,4.
    \end{array}
  \right.
\end{displaymath}
This implies that $S_2$ is the unique simple submodule of $T_2^{A_5}$, i.e, $\soc T_2^{A_5}=S_2$. To find quotients of $T_2^{A_5}$, we consider $\Hom_{kA_5}(T_2^{A_5},S_i)$. By \ref{S1A4}, $\Hom_{kA_4}(T_2, (S_1)_{A_4})=\Hom_{kA_4}(T_2, (S_4)_{A_4})=\Hom_{kA_4}(T_2, T_1)=0$ by Schur's Lemma. $\Hom_{kA_4}(T_2, (S_2)_{A_4})=0$ as $T_2$ has no submodule mapped onto $T_3$ for any nonzero homomorphism $T_2\rightarrow \begin{smallmatrix} T_2 \\ T_3 \end{smallmatrix}$. A nonzero morphism $T_2 \rightarrow \begin{smallmatrix} T_3 \\ T_2 \end{smallmatrix}$ exists only when $T_2$ is mapped onto $T_2$, so $\dim_k\Hom_{kA_4}(T_2, (S_3)_{A_4}$ $)=1$. By the equation \ref{TjSi},
\begin{displaymath}
    \dim_k\Hom_{kA_5}(T_2^{A_5}, S_i)=
     \left\{
    \begin{array}{lr}
      1  \quad\quad i=3\\
      0  \quad\quad i=1,2,4.
    \end{array}
  \right.
\end{displaymath}
This implies that $S_3$ is the unique simple quotient of $T_2^{A_5}$, i.e, $T_2^{A_5}/\rad T_2^{A_5}=S_3$.
\begin{equation}\label{T2A51stlast} T_2^{A_5}/\Rad (T_2^{A_5})\cong S_3 \quad\text{and}\quad  \soc (T_2^{A_5}) \cong S_2\end{equation}
For the remaining layers, consider the dimension of $T_i^{A_5}$.
\label{TiA5dim}
By equation \ref{inddim} and $[A_5:A_4]=|A_5|/|A_4|=5$, $\dim T_i^{A_5}=[A_5:A_4]\cdot\dim T_i=5$. This implies $S_1$ is the remaining layer. Hence, the radical series of $T_2^{A_5}$ is \begin{equation}T_2^{A_5}:\quad\begin{matrix} S_3 \\ S_1 \\ S_2 \end{matrix}.\end{equation}

We do similarly to find $T_3^{A_5}$. In $\Hom_{kA_4}((S_i)_{A_4},T_3)$, $\Hom_{kA_4}((S_1)_{A_4}, T_3)=\Hom_{kA_4}$ $((S_4)_{A_4}, T_3)=\Hom_{kA_4}(T_1, T_3)=0$ by Schur's Lemma. A nonzero homomorphism $\begin{smallmatrix} T_2 \\ T_3 \end{smallmatrix}\rightarrow T_3$ does not exist as $T_2$ cannot be mapped onto $T_3$, so $\Hom_{kA_4}((S_2)_{A_4}, T_3)=0$. A nonzero morphism $\begin{smallmatrix} T_3 \\ T_2 \end{smallmatrix} \rightarrow T_3$ exists only when $T_3$ is mapped onto $T_3$ and $T_2$ into zero, so $\dim_k\Hom_{kA_4}((S_3)_{A_4}, T_3)=1$. By the equation \ref{SiTj},
\begin{displaymath}
    \dim_k(\Hom_{kA_5}(S_i, T_3^{A_5}))=
     \left\{
    \begin{array}{lr}
      1  \quad\quad i=3\\
      0  \quad\quad i=1,2,4.
    \end{array}
  \right.
\end{displaymath}
and this implies that $\soc T_3^{A_5}=S_3$. In $\Hom_{kA_5}(T_3^{A_5}, S_i)$, $\Hom_{kA_4}(T_3, (S_1)_{A_4})=\Hom_{kA_4}$ $(T_3, (S_4)_{A_4})=\Hom_{kA_4}(T_3, T_1)=0$ by Schur's Lemma. A nonzero morphism $T_3 \rightarrow \begin{smallmatrix} T_2 \\ T_3 \end{smallmatrix}$ exists only when $T_3$ is mapped onto $T_3$, so $\dim_k\Hom_{kA_4}(T_3, (S_2)_{A_4})=1$. A nonzero map $T_3\rightarrow \begin{smallmatrix} T_3 \\ T_2 \end{smallmatrix}$ does not exist as no submodule exists mapped onto $T_2$, so $\Hom_{kA_4}(T_3, (S_3)_{A_4})=0$. By the equation \ref{TjSi},
\begin{displaymath}
    \dim_k(\Hom_{kA_5}(T_3^{A_5}, S_i))=
     \left\{
    \begin{array}{lr}
      1  \quad\quad i=2\\
      0  \quad\quad i=1,2,4.
    \end{array}
  \right.
\end{displaymath}
and thus, $T_3^{A_5}/\rad T_3^{A_5}=S_2$.
\begin{equation}\label{T3A51stlast} T_3^{A_5}/\rad (T_3^{A_5})\cong S_2  \quad\text{and}\quad  \soc(T_3^{A_5})\cong S_3.\end{equation}
As $\dim T_3^{A_5}=5$, the dimension of the remaining layers is 1, so $T_3^{A_5}$ has the only one remaining radical layer, that is, $S_1$. Hence, its radical series is \begin{equation}T_3^{A_5}:\quad\begin{matrix} S_2 \\ S_1 \\ S_3 \end{matrix}.\end{equation}
For remark, $T_2^{A_5}$ and $T_3^{A_5}$ are uniserial so are indecomposable.

\section{Projective Indecomposable $kA_5$-modules}

By \ref{regmodAdecompassumofprojcovers} and the result of subsection \ref{simplekA5mod}, $kA_5$ has four projective indecomposable modules. Denote  $P_i$ for $i=1,\cdots,4$ as the projective indecomposable modules of $A_5$. The radical series of $P_i$ are
\begin{equation}\label{Pipossiblestructure}\begin{matrix} P_i=\quad \begin{matrix} S_i \\ * \\ S_i \end{matrix}\quad \quad\text{for } i=1,2,3 & \quad\text{and} \quad& P_4=S_4\end{matrix}.\end{equation}

\subparagraph{Projective Covers and Injective Envelopes of $T_2^{A_5}$ and $T_3^{A_5}$}
Recall that $P_i=P_{S_i}=I_{S_i} \quad i=1,2,3, \quad \text{and} \quad  P_4=P_{S_4}=S_4.$ Each of the projective cover $P_{T_i^{A_5}}$ and the injective envelope $I_{T_i^{A_5}}$ of $T_i^{A_5}$ is one of $P_1$, $P_2$, $P_3$ and $P_4$ by the following. From the relation \ref{T2A51stlast}, $P_{T_2^{A_5}}=P_3$ as, among $P_i$,
\begin{equation*}P_3\rightarrow S_3(\cong T_2^{A_5}/\Rad T_2^{A_5})
\end{equation*}
 is the only essential epimorphism that maps $P_{T_2^{A_5}}$ onto $ T_2^{A_5}/\Rad T_2^{A_5}$. $I_{T_2^{A_5}}=P_2$ as, among $P_i$,
 \begin{equation*}(\soc (T_2^{A_5})\cong) S_2 \rightarrow P_2\end{equation*}
  is the only essential monomorphism that maps $\soc T_2^{A_5}$ into $I_{T_2^{A_5}}$. Similarly, from the relation \ref{T3A51stlast}, $P_{T_3^{A_5}}=P_2$ as, among $P_i$, only $P_2$ maps $P_{T_3^{A_5}}$ onto $S_2\cong T_3^{A_5}/\rad T_3^{A_5}$. $I_{T_3^{A_5}}=P_3$ as, among $P_i$, only $I_3=P_3$ maps $S_3\cong \soc T_3^{A_5}$ into $I_{T_3^{A_5}}$.
\begin{align}\label{P2projinj}P_2 = P_{T_3^{A_5}} =I_{T_2^{A_5}}\\ \label{P3projinj}P_3 = P_{T_2^{A_5}} =I_{T_3^{A_5}}.\end{align}

The following gives a way to find the dimensions of $P_i$. By \ref{dimAbydecompproj}, $60 = \dim P_1 + 2\dim P_2 + 2 \dim P_3 + 4\cdot 4$, so $\dim P_1 + 2\dim P_2 + 2\dim P_3 = 44.$ $\dim {P_i}$ is divided by 4 by \ref{largpowerofpdivdesdimofprojmod}. By the structural symmetry of $S_2$ and $S_3$, $\dim P_2=\dim P_3$. We do simple calculation and obtain that the dimensions of $P_i$ are either
\begin{align*}& \dim P_1=12, \;\dim P_2=\dim P_3=8 &\text{or  }\quad& \dim P_1=28,\; \dim P_2=\dim P_3=4.\end{align*}
 By \ref{Pipossiblestructure}, the radical series of $P_2$ and $P_3$ has at least one radical layer other than the head and the socle, so their dimensions are greater than $4$. Hence, $$\begin{matrix} \dim P_1 = 12,& \dim P_2 = 8, &\text{and} &\dim P_3=8\end{matrix}.$$

\subparagraph{Radical Series of PIMs}
The results of $P_i$ are from \cite{C.W.Eaton}. By the relation \ref{P2projinj}, $P_2$ has a quotient $T_3^{A_5}: \begin{smallmatrix} S_2 \\ S_1 \\ S_3 \end{smallmatrix}$ and a submodule $T_2^{A_5}: \begin{smallmatrix} S_3 \\ S_1 \\ S_2 \end{smallmatrix}$. The only possible radical series of $P_2$ with the least dimension is the uniserial module $\begin{smallmatrix} S_2 \\ S_1 \\ S_3 \\ S_1 \\ S_2 \end{smallmatrix}$ with dimension $8$, equal to $\dim{P_2}$. This shows $P_2$ is precisely the uniserial module.
We do in the similar way to find the radical series of $P_3$. By the relation \ref{P3projinj}, $P_3$ has a quotient $T_2^{A_5}: \begin{smallmatrix} S_3 \\ S_1 \\ S_2 \end{smallmatrix}$ and a submodule $T_3^{A_5}: \begin{smallmatrix} S_2 \\ S_1 \\ S_3 \end{smallmatrix}$. Then $\begin{smallmatrix} S_3 \\ S_1 \\ S_2 \\ S_1 \\ S_3 \end{smallmatrix}$ is the only possible radical series of $P_3$ having the least dimension and the dimension is equal to $P_3$, 8. This shows $P_3$ is the uniserial module.
\begin{figure}[h]\begin{align*} P_2 =& \quad \begin{matrix} S_2 \\ S_1 \\ S_3 \\ S_1 \\ S_2 \end{matrix} &
P_3=& \quad \begin{matrix} S_3 \\ S_1 \\ S_2 \\ S_1 \\ S_3 \end{matrix}\end{align*}\end{figure}

\subparagraph{\boldmath$P_1$}
We want to find the radical series of $P_1$, specifically, each radical layer of $P_1$. In order to do this, we consider $\Hom_{kA_5}(P_1, P_i)$ and $\Hom_{kA_5}(P_i, P_1)$ for $i=2, 3$.
In $\Hom_{kA_5}(P_1, P_2)$,
there is a homomorphism $\varphi_1: P_1\rightarrow P_2$ where the first radical layer $P_1/\Rad P_1$ isomorphic to $S_1$ is mapped onto the second radical layer $\Rad P_2/{\Rad^2 P_2}$ isomorphic to $S_1$ and the last layer $\Soc P_1$ is mapped into $0$. $P_1$ has a quotient $P_1/\ker(\varphi_1)\cong \im(\varphi_1)$, which is $\begin{smallmatrix} S_1\\S_3\\S_1\\S_2 \end{smallmatrix}$. There exists the other homomorphism $\varphi_2$ where the first layer $P_1/\Rad P_1\cong S_1$ is mapped onto the fourth layer $\Rad^3 P_2/\Rad^4 P_2\cong S_1$ and $\Soc P_1\cong S_1$ is mapped into zero. $P_1$ has a quotient $P_1/\ker \varphi_2\cong\im(\varphi_2)  =\begin{smallmatrix}
S_1\\S_2
\end{smallmatrix}$. 
Up to scalar, $\varphi_1$ and $\varphi_2$ are the only homomorphisms in $\Hom_{kA_5}(P_1,P_2)$.
\begin{figure}[h]
\begin{align*}
\begin{matrix}
\varphi_1: &P_1     && \longrightarrow  && P_2 \\ 
&&&&&\\
&S_1     &&                    && S_2\\
&            &&   \searrow   &&S_1\\
& *          &&                    &&S_3 \\
 &           &&                     &&S_1\\
& S_1    & &                    &&S_2\\
 &            & &  \searrow   &&0     
 \end{matrix} \quad\quad&\quad\quad \begin{matrix}
\varphi_2: &P_1 && \longrightarrow && P_2 \\ 
&&&&&\\
&S_1 &&             && S_2 \\
 &&&     &&S_1\\
& * && \searrow &&S_3 \\
 &&   &&&S_1\\
& S_1&  &&&S_2\\
& &   &\searrow  & &0  
 \end{matrix}.
 \end{align*}
 \end{figure}

To find submodules of $P_1$, we consider $\Hom_{kA_5}(P_2, P_1)$. There is a homomorphism $\sigma_1:P_2\rightarrow P_1$ that maps the fourth radical layer $\rad^3 P_2/\rad^4 P_2\cong S_1$ onto $\soc P_1\cong S_1$. The kernel of $\sigma_1$ is $\soc P_2\cong S_2$. $P_1$ has the submodule $\im \sigma_1\cong P_2/\ker \sigma_1=\begin{smallmatrix}S_2\\S_1\\S_3\\S_1\end{smallmatrix}$. There exists the other homomorphism $\sigma_2: P_2\rightarrow P_1$ that maps the second radical layer $\rad P_2/\rad^2 P_2\cong S_1$ onto the last layer $\soc P_1\cong S_1$. The kernel of $\sigma_2$ is $\begin{smallmatrix}S_3\\S_1\\S_2 \end{smallmatrix}$. $P_1$ has the submodule $\im \sigma_2\cong P_2/\ker \sigma_2=\begin{smallmatrix}S_2\\S_1\end{smallmatrix}$.

\begin{figure}[h]\centering
\begin{align*}
\begin{matrix}
\sigma_1: &P_2     && \longrightarrow  && P_1 \\ 
&&&&&\\
&S_2     &&                    && S_1\\
&S_1            &&   \searrow   &&\\
& S_3          &&                    &&* \\
 & S_1          &&                     &&\\
& S_2    & &        \searrow              &&S_1\\
 &            & &   &&0     
 \end{matrix} \quad\quad&\quad\quad
 \begin{matrix}
\sigma_2: &P_2     && \longrightarrow  && P_1 \\ 
&&&&&\\
&S_2     &&                    && S_1\\
&S_1            &&      &&\\
& S_3          &&          \searrow           &&* \\
 & S_1          &&                     &&\\
& S_2    & &                     &&S_1\\
 &            & &   &&0     
 \end{matrix} 
 \end{align*}
 \end{figure}

In $P_1$, there are quotients \;
$\begin{matrix}
S_1\\S_3\\S_1\\S_2
\end{matrix}$ \;and\; $\begin{matrix}
S_1\\S_2
\end{matrix}$ and submodules  \;\;$\begin{matrix}S_2\\S_1\\S_3\\S_1\end{matrix}$ \;and\; 
$\begin{matrix}S_2\\S_1\end{matrix}$\;.

We do the same on $\Hom_{kA_5}(P_1, P_3)$ and $\Hom_{kA_5}(P_3, P_1)$. In $\Hom_{kA_5}(P_1, P_3)$, one nonzero homomorphism is $\eta_1: P_1\rightarrow P_3$ where the first radical layer of $P_1$ maps onto the second radical layer of $P_3$ and the last layer into $0$. $P_1$ has the quotient $P_1/\ker \eta_1\cong\im \eta_1=\begin{smallmatrix}S_1\\S_2\\S_1\\S_3\end{smallmatrix}$. The other homomorphism is $\eta_2: P_1\rightarrow P_3$ where the first radical layer maps onto the fourth radical layer and the last layer into $0$. $P_1$ has the quotient $P_1/\ker \eta_2\cong \im \eta_2\cong\begin{smallmatrix}S_1\\S_3 \end{smallmatrix}$. Up to scalar, $\eta_1$ and $\eta_2$ are the only homomorphisms in $\Hom_{kA_5}(P_1, P_3)$.

\begin{figure}[h]
\begin{align*}
\begin{matrix}
\eta_1: &P_1 && \longrightarrow && P_3  \\ 
&&&&&\\
&S_1 &&             && S_3 \\
 &      &&   \searrow  &&S_1\\
& * &&                   &&S_2 \\
& &&        & &S_1\\
& S_1 &&               &&S_3\\
& &&   \searrow  & &0 
 \end{matrix} \quad\quad&\quad\quad
\begin{matrix}
\eta_2: &P_1 && \longrightarrow && P_3 \\ 
&&&&&\\
&S_1 &&             && S_3\\
 &&&     &&S_1\\
& * && \searrow &&S_2\\
 &&&&&S_1\\
& S_1&&&&S_3\\
& & &  \searrow  & &0
 \end{matrix}.
 \end{align*}
 \end{figure}
In $\Hom_{kG}(P_3, P_1)$, there is a homomorphism $\tau_1:P_3\rightarrow P_1$ that maps the second radical layer onto some middle radical layer of $P_1$ and the fourth radical layer onto the last layer $\soc P_1$. The kernel of $\tau_1$ is $\soc P_3\cong S_3$.  $P_1$ has the submodule $\im \tau_1\cong P_3/\ker \tau_1=\begin{smallmatrix}S_3\\S_1\\S_2\\S_1\end{smallmatrix}$. There exists the other homomorphism $\tau_2: P_3\rightarrow P_1$ that maps the second radical layer onto the last layer $\soc P_1$. $\ker(\tau_2)=\begin{smallmatrix}S_2\\S_1\\S_3 \end{smallmatrix}$. $P_1$ has the submodule $\im \tau_2\cong P_3/\ker \tau_2=\begin{smallmatrix}S_3\\S_1\end{smallmatrix}$.

\begin{figure}[h]
\begin{align*}
\begin{matrix}
\tau_1: &P_3     && \longrightarrow  && P_1 \\ 
&&&&&\\
&S_3     &&                    && S_1\\
&S_1            &&     &&\\
& S_2          &&     \searrow                &&* \\
 & S_1          &&                     &&\\
& S_3    & &        \searrow              &&S_1\\
 &            & &   &&0     
 \end{matrix} \quad\quad & \quad\quad
\begin{matrix}
\tau_2: &P_3     && \longrightarrow  && P_1 \\ 
&&&&&\\
&S_3     &&                    && S_1\\
&S_1            &&    &&\\
& S_2          &&          \searrow           &&* \\
 & S_1          &&                     &&\\
& S_3    & &                     &&S_1\\
 &            & &   &&0     
 \end{matrix} \end{align*} \end{figure}
 
 We obtained quotients of $P_1$, \;\;
$\begin{matrix}
S_1\\S_2\\S_1\\S_3
\end{matrix}$ \;\; and\;\; 
$\begin{matrix}
S_1\\S_3.
\end{matrix}$\;,
and submodules of $P_1$, \;\;
$\begin{matrix}S_3\\S_1\\S_2\\S_1\end{matrix}$
\;\;and\;\;
$\begin{matrix}S_3\\S_1\end{matrix}$\;\;. Hence, we have a possible $P_1$ radical series structure
\begin{figure}[h]
\centering
$\begin{matrix}
&S_1\\
  S_3&&S_2\\
  S_1  & & S_1\\
S_2&&S_3\\
& S_1
 \end{matrix}$\;.
 \end{figure}
 
By the diagram \ref{Q1projindradser}, a short exact sequence $0\rightarrow \begin{smallmatrix} T_3\\T_1\end{smallmatrix}\rightarrow Q_1 \rightarrow \begin{smallmatrix} T_1\\T_2\end{smallmatrix} \rightarrow 0$ satisfies $Q_1=\begin{smallmatrix} T_3\\T_1\end{smallmatrix} \oplus \begin{smallmatrix} T_1\\T_2\end{smallmatrix}$. Recall that an induced projective module is projective. By projectivity of ${Q_1}^{A_5}$, the short exact sequence
\begin{equation*}\label{shortexactseqQ1G}0\rightarrow {\begin{smallmatrix} T_3\\T_1\end{smallmatrix}}^{A_5} \rightarrow {Q_1}^{A_5} \rightarrow {\begin{smallmatrix} T_1\\T_2\end{smallmatrix}}^{A_5} \rightarrow 0\end{equation*}
satisfies
\begin{equation*}
{Q_1}^{A_5}={\begin{smallmatrix} T_3\\T_1\end{smallmatrix}}^{A_5} \oplus{\begin{smallmatrix} T_1\\T_2\end{smallmatrix}}^{A_5}.
\end{equation*}
This implies that ${Q_1}^{A_5}$ has a submodule ${\begin{smallmatrix} T_3\\T_1\end{smallmatrix}}^{A_5}=\begin{smallmatrix} \begin{smallmatrix} S_2 \\ S_1 \\ S_3 \end{smallmatrix}\\S_1\oplus S_4\end{smallmatrix}$ and its quotient ${Q_1}^{A_5}/{\begin{smallmatrix} T_3\\T_1\end{smallmatrix}}^{A_5}= {\begin{smallmatrix} T_1\\T_2\end{smallmatrix}}^{A_5}=\begin{smallmatrix} \begin{smallmatrix} S_1\oplus S_4 \\ S_3 \\ S_1 \\ S_2 \end{smallmatrix}\end{smallmatrix}$.
 Hence, the radical series of ${Q_1}^{A_5}$ is
\begin{figure}[H]\centering
\begin{align*}{Q_1}^{A_5}=& \quad
\left(\begin{matrix} &T_1& \\T_2 & \oplus & T_3 \\  &T_1& \;\;. \end{matrix}\right)^{A_5}\quad=
\quad \begin{matrix}
&S_1\oplus S_4\\
  S_3&&S_2\\
  S_1  & \oplus &S_1 \\
S_2&&S_3\\
& S_1\oplus S_4
 \end{matrix} \quad = \quad \begin{matrix}
&S_1&\\
  S_3&&S_2&\\
  S_1  & \oplus &S_1 &\oplus\;\; {S_4}^2 \;.\\
S_2&&S_3&\\
& S_1&
 \end{matrix} 
 \end{align*}\end{figure}
Thus, we proved that $P_1$ is 
\begin{figure}[H]
\begin{align*}     P_1=&\quad\begin{matrix}
&S_1& \\
S_3 & &S_2\\
S_1&\oplus&S_1\\
S_2&&S_3\\
&S_1& .
 \end{matrix} \end{align*}\end{figure} Here, $\rad P_1/\soc P_1$ is decomposable. Specifically, $\rad P_1/\soc P_1={T_2}^{A_5}\oplus{T_3}^{A_5}$.
 The projective indecomposable $kA_5$-modules are
 \begin{figure}[h]
 \centering
 \begin{align}\label{allPi}
  P_1= \begin{matrix}
&S_1& \\
S_3 & &S_2\\
S_1&\oplus&S_1\\
S_2&&S_3\\
&S_1&
 \end{matrix}, \quad&\quad\;\; P_2=\quad \begin{matrix} S_2 \\ S_1 \\ S_3 \\ S_1 \\ S_2 \end{matrix}, & P_3=\quad \begin{matrix} S_3 \\ S_1 \\ S_2 \\ S_1 \\ S_3 \end{matrix}, \quad&\quad P_4= S_4.\end{align}
 \caption{The projective indecomposable modules of $A_5$}
\end{figure}

\subparagraph{Decomposition of $kA_5$} The decomposition of $kA_5$ as the direct sum of projective indecomposable modules is 
\begin{equation*}
kA_5 \cong {P_1}^1 \oplus {P_2}^2 \oplus {P_3}^2 \oplus {P_4}^4
\end{equation*}
\begin{figure}[h]
\begin{align*}
\text{where} \quad&\;\; P_1= \begin{matrix}
&S_1& \\
S_3 & &S_2\\
S_1&\oplus&S_1\\
S_2&&S_3\\
&S_1&
 \end{matrix}, & P_2=\quad \begin{matrix} S_2 \\ S_1 \\ S_3 \\ S_1 \\ S_2 \end{matrix}, \quad& \quad P_3=\quad \begin{matrix} S_3 \\ S_1 \\ S_2 \\ S_1 \\ S_3 \end{matrix}, & \text{and} \quad& P_4= S_4.
 \end{align*}
 \end{figure}

\subparagraph{The Cartan Matrix of $kA_5$}
By calculating the multiplicity of $S_i$ in $P_j$ for $1\leq i,j \leq 4$ in the diagram \ref{allPi}, the Cartan matrix $(C_{S_iS_j})$ of $kA_5$ is \begin{figure}[h]\label{cartanmatrixA5}
\centering
$\left[\begin{matrix} 4 & 2 & 2 & 0 \\ 2 & 2 & 1 & 0 \\ 2 & 1 & 2 & 0\\ 0 & 0 & 0 & 1 \end{matrix}\right].$
\caption{The Cartan matrix of $kA_5$}
\end{figure}

\subparagraph{Decompositions of $kA_5$ into Blocks} By the above Cartan matrix \ref{cartanmatrixA5}, the multiplicity of $S_i$ in $P_j$ is nonzero when $1\leq i,j \leq 3$. By \ref{blocksandmultiplicity}, this implies that $P_1$, $P_2$ and $P_3$ are in the same block. Since the multiplicity of $S_4$ in $P_j$ for $j=1,2,3$ are zero, $P_4$ has its own block. This shows there are exactly two blocks of $kA_5$. We can abstractly write central orthogonal idempotents $e_1$ and $e_2$ as
\begin{align*}
e_1&=\sum_{Af_i \cong P_1, P_2, \text{ or } P_3} f_i,\\
e_2&=\sum_{Af_i\cong P_4}f_i.
\end{align*}
Here, the principal block $B_0(kA_5)$ is $kA_5e_1$.
The unique (ring) decomposition of $kA_5$ is $kA_5\cong kA_5e_1\oplus kA_5e_2$
where $e_1=\sum_{Af_i \cong P_1, P_2, \text{ or } P_3} f_i$ and $e_2=\sum_{Af_i\cong P_4}f_i$.

\chapter{Conclusions}

We've seen all the projective indecomposable modules of $C_2\times C_2, A_4$ and $A_5$. We used the relations between those groups to find their projective indecomposable modules explicitly. To find projective indecomposable modules of $A_5$, it started with finding projective indecomposable modules of a cyclic $p$-group, the Klein four-group, the alternating 4-group and the alternating 5-group. 

A cyclic $p$-group algebra with its radical series is uniserial and each radical layer is isomorphic to the trivial module. The Klein four-group algebra has its radical series components all trivial as it is 2-group. To prove that every p-group has the trivial module as the only simple module, we used the weak form of Clifford's theorem. The alternating 4-group algebra has the Klein four-group as the largest normal $p$-subgroup of the algebra. By theorem, the Klein four-group acts trivially so the alternating four-group simple modules are isomorphic to the cyclic 3-group simple modules. By considering projective covers and injective envelopes, we were able to find $Q_i$. $A_5$ used the induced indecomposable modules in $A_4$ to find indecomposable modules of $A_5$. When finding the projective indecomposable $kA_5$-module $P_1$, we checked the structure of $P_1$ with the induced module on $Q_1$ to $A_5$.

We know blocks are good enough to study because any indecomposable module is contained in one block, so it's sufficient to study just one block that the module belongs to. How can we understand the usefulness of projective indecomposable modules? Is there a general projective indecomposable structure property, not confined to a group algebra $kG$? In the context of a regular module, what other dualities can be observed beyond those between radical and socle, or projective and injective?

\bibliography{main}{}
\bibliographystyle{plain}

\appendix
\chapter{Non-commutative Algebra}
 
\begin{Definition}\index{trivial representation}\label{trivial representation}
For any group $G$ and commutative ring $R$, the \textit{trivial representation} of $G$ is a morphism $\rho: G \rightarrow GL(R)$ defined by $\rho(g)=1_R$, for all $g\in G$. Here, $1_R$ is the identity map from $R$ to $R$. A \textit{trivial module}\index{trivial module} is a module $M$ over a ring $R$ where the ring element acts as the identity element for every element in the module. i.e., $g*a=a$ for all $g\in G$, and $m\in M$.
\end{Definition}

\begin{Remark}
The centre $Z(A)$ is a subalgebra of $A$.
\end{Remark}

\begin{Definition}
If $W$ is a linear subspace of a finite-dimensional vector space $V$, then the \textit{codimension}\index{codimension} of $W$ in $V$ is $\codim (W)=\dim(V)-\dim(W)$.
\end{Definition}

\begin{Definition}\label{composseries}
Let $M$ be a left $R$-module. A \textit{composition series}\index{composition series} of $M$ is a chain of submodules
\begin{equation*}
0=M_0< M_1<\cdots<M_{r-1}<M_r=M
\end{equation*}
such that $M_i/M_{i-1}$ is simple for $1\leq i\leq r$.
\end{Definition}

\begin{Theorem}[Jordan-H\"older Theorem]\label{JordHoldThm}\index{Jordan-H\"older Theorem}
Assume an $R$-module $M$ and two composition series $0=M_0<M_1<\cdots< M_{r-1}<M_r=M$ and $0=N_0<N_1<\cdots<N_{s-1}<N_s=M$. Then
\begin{enumerate}
\item
$r=s$.
\item
For some permutations $\sigma$ of $\{1,\cdots, r\}$, $M_i/M_{i-1} \cong N_{\sigma(i)}/N_{\sigma(i)-1}$.
\end{enumerate}
\end{Theorem}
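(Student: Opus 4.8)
The plan is to prove both conclusions simultaneously by induction on $r$, the length of the first composition series $0 = M_0 < M_1 < \cdots < M_r = M$. For the base case $r = 1$ the module $M = M_1$ is simple, so its only submodules are $0$ and $M$; hence any composition series of $M$ must be $0 < M$, forcing $s = 1$, and the single factor on each side is $M$ itself. (As a preliminary I would record that a module possessing a composition series is both Artinian and Noetherian, so that every submodule of it again has a composition series; this is needed below.)

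For the inductive step, assume $r > 1$ and that the theorem holds for every module admitting a composition series of length strictly less than $r$. Look at the penultimate terms $M_{r-1}$ and $N_{s-1}$, which are maximal submodules of $M$. If $M_{r-1} = N_{s-1}$, then this common submodule carries the composition series $0 < M_1 < \cdots < M_{r-1}$ of length $r-1$, so the induction hypothesis applies to it and yields $r-1 = s-1$ together with a matching of the factors $M_i/M_{i-1}$ and $N_j/N_{j-1}$ for indices at most $r-1$; adjoining the equal top factors $M/M_{r-1} = M/N_{s-1}$ settles this case.

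The substantive case is $M_{r-1} \neq N_{s-1}$. Here maximality forces $M_{r-1} + N_{s-1} = M$; setting $K = M_{r-1} \cap N_{s-1}$, the second isomorphism theorem gives $M/M_{r-1} \cong N_{s-1}/K$ and $M/N_{s-1} \cong M_{r-1}/K$, so in particular $M_{r-1}/K$ and $N_{s-1}/K$ are both simple. Since $M_{r-1}$ has a composition series, so does its submodule $K$, say $0 = K_0 < K_1 < \cdots < K_t = K$. Then $0 < K_1 < \cdots < K_t < M_{r-1}$ is a composition series of $M_{r-1}$, and by the induction hypothesis (legitimate, as $M_{r-1}$ has length $r-1 < r$) we get $t+1 = r-1$ and that the multiset $\{M_i/M_{i-1} : i \le r-1\}$ equals $\{K_1/K_0, \ldots, K_t/K_{t-1}, M_{r-1}/K\}$. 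Likewise $0 < K_1 < \cdots < K_t < N_{s-1}$ is a composition series of length $t+1 = r-1 < r$, so the induction hypothesis now also applies to $N_{s-1}$, giving $s-1 = r-1$, hence $r = s$, and $\{N_j/N_{j-1} : j \le s-1\}$ equals $\{K_1/K_0, \ldots, K_t/K_{t-1}, N_{s-1}/K\}$. Finally the factors of the first series are $\{K_1/K_0, \ldots, K_t/K_{t-1}, M_{r-1}/K, M/M_{r-1}\}$ and those of the second are $\{K_1/K_0, \ldots, K_t/K_{t-1}, N_{s-1}/K, M/N_{s-1}\}$; the crossed isomorphisms $M_{r-1}/K \cong M/N_{s-1}$ and $N_{s-1}/K \cong M/M_{r-1}$ make these two multisets coincide, which furnishes the required permutation $\sigma$.

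I expect the main obstacle to be the bookkeeping in the case $M_{r-1} \neq N_{s-1}$: one must invoke the induction hypothesis twice — once on $M_{r-1}$ directly, and once on $N_{s-1}$ only after exhibiting a length-$(r-1)$ composition series for it through $K$ — and then carefully track the multisets of composition factors so that the isomorphisms coming from the second isomorphism theorem line them up. An alternative route is to quote the Schreier refinement theorem (via the Zassenhaus lemma) and apply it to the two given series, but for finitely generated modules the direct induction above is the more self-contained argument.
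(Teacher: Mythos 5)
Your proof is correct, but it takes a genuinely different route from the one in the paper. You argue by strong induction on the length $r$ of the first composition series, splitting into the cases $M_{r-1}=N_{s-1}$ (apply the inductive hypothesis to the common maximal submodule) and $M_{r-1}\neq N_{s-1}$ (pass to $K=M_{r-1}\cap N_{s-1}$, use the second isomorphism theorem for the crossed factors, and invoke the inductive hypothesis on both $M_{r-1}$ and $N_{s-1}$). The paper instead takes the refinement approach you mention in your closing remark: it inserts the intermediate terms $M_{ij}=M_{i-1}+(M_i\cap N_j)$ and $N_{ji}=N_{j-1}+(N_j\cap M_i)$ and cites the Zassenhaus (butterfly) isomorphism $M_{ij}/M_{i(j-1)}\cong N_{ji}/N_{j(i-1)}$ to match the factors directly, which is essentially a proof of the Schreier refinement theorem specialised to composition series. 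The trade-off is as you anticipated: your induction is more elementary and self-contained — it needs only the second isomorphism theorem and the fact that a module of finite length has Noetherian/Artinian submodules — but it must be restarted from scratch if one later wants the refinement statement for general subnormal series; the paper's argument proves that stronger statement in one stroke, at the cost of invoking the Zassenhaus lemma and then needing a small extra observation (which the paper leaves implicit) that in the refined chain between $M_{i-1}$ and $M_i$ exactly one factor is nonzero, so the refinement collapses back to the original composition series and $r=s$ follows.
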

\begin{proof}
Define $M_{ij}=M_{i-1}+(M_i \cap N_j)$ such that $M_{i-1}\leq M_{ij}\leq M_i$ and $N_{ij}=N_{i-1}+(N_i \cap M_j)$ such that $N_{i-1}\leq N_{ij}\leq N_i$. Then
$$M_{i-1}=M_{i0}\leq M_{i1}\leq \cdots \leq M_{i(s-1)}\leq M_{is}=M_i$$
for each $i=1,\cdots, r$.
$$N_{i-1}=N_{i0}\leq N_{i1}\leq \cdots \leq N_{i(s-1)}\leq N_{is}=N_i$$
for each $i=1,\cdots, s$.
Then $M_{ij}/M_{i(j-1)}\cong N_{ji}/N_{ji-1}$.
\end{proof}

\chapter{Representation Theory}
\begin{Lemma}\label{Vprojdecomp}
Let $V$ be a vector space and $p:V\rightarrow V$ be a linear map such that $p^2=p$. Then $V=\im (p)\oplus \ker(p)$.
\end{Lemma}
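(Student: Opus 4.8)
The plan is to verify the two defining conditions of an internal direct sum: that $V = \im(p) + \ker(p)$ and that $\im(p) \cap \ker(p) = 0$. Both follow from the single identity $p^2 = p$ (idempotency), so this is short and purely computational; there is no real obstacle, but I will lay out the steps carefully.

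First I would establish the sum. Given an arbitrary $v \in V$, I would use the canonical decomposition $v = p(v) + (v - p(v))$. The first summand $p(v)$ lies in $\im(p)$ by definition. For the second summand, I would compute $p(v - p(v)) = p(v) - p^2(v) = p(v) - p(v) = 0$, using $p^2 = p$, so $v - p(v) \in \ker(p)$. Hence every $v$ is a sum of an element of $\im(p)$ and an element of $\ker(p)$, giving $V = \im(p) + \ker(p)$.

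Next I would show the intersection is trivial. Suppose $w \in \im(p) \cap \ker(p)$. Since $w \in \im(p)$, there is some $u \in V$ with $w = p(u)$; since $w \in \ker(p)$, we have $p(w) = 0$. Then $w = p(u) = p^2(u) = p(p(u)) = p(w) = 0$, again invoking $p^2 = p$. Therefore $\im(p) \cap \ker(p) = \{0\}$, and combined with the previous paragraph this yields $V = \im(p) \oplus \ker(p)$, as required. The only point worth stating explicitly is that all maps here are linear, so $\im(p)$ and $\ker(p)$ are genuine subspaces and the decomposition is a decomposition of vector spaces.
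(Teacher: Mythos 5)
Your proof is correct and takes essentially the same approach as the paper's: the decomposition $v = p(v) + (v - p(v))$ to get the sum, and the computation $w = p(u) = p^2(u) = p(w) = 0$ to get trivial intersection, differing only in the order of the two steps.
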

\begin{proof}
Let $x\in \im(p)\cap\ker(p)$. Since $x\in \im(p)$, there exists $y\in V$ such that $p(y)=x$. Then $p(x)=p^2(y)=p(y)=x=0$ since $x\in \ker(p)$. Let $x\in V$. Then $x=p(x)+(x-p(x))$, as $p(x-p(x))=p(x)-p^2(x)=p(x)-p(x)=0$.
\end{proof}

\begin{Theorem}[Maschke's Theorem]\label{Maschke'sthm}\index{Maschke's Theorem}
Let $k$ be a field where $\charr(k)$ does not divide $|G|$. Let $V$ be a $G$-space and $U$ be a $G$-subspace of $V$. Then there exists a $G$-subspace $W$ of $V$ such that $V=U\oplus W$.
Equivalently, the regular module $kG$ is semisimple.
\end{Theorem}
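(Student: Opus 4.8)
The plan is to carry out the classical averaging argument, in which the hypothesis $\charr(k)\nmid|G|$ enters at exactly one point: it lets us divide by $|G|$ in $k$. First I would ignore the $G$-action and pick any $k$-linear projection $p_0\colon V\to V$ with image $U$; this exists because the subspace $U$ has a vector-space complement in $V$. This $p_0$ will typically not be $G$-equivariant, so the second step is to symmetrise it: set $p=\tfrac{1}{|G|}\sum_{g\in G}g\circ p_0\circ g^{-1}$, where $g$ denotes the action map $v\mapsto gv$ and the scalar $\tfrac{1}{|G|}$ is legitimate precisely because $|G|$ is invertible in $k$.

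Next I would check three things about $p$. First, $p$ is a map of $G$-spaces: for $h\in G$ one computes $h\circ p\circ h^{-1}=\tfrac{1}{|G|}\sum_{g\in G}(hg)\circ p_0\circ(hg)^{-1}$, and reindexing the sum by $g\mapsto h^{-1}g$ returns $p$, so $p$ commutes with every group element. Second, $\im(p)\subseteq U$, since each summand $g\circ p_0\circ g^{-1}$ has image inside $gU=U$ by $G$-stability of $U$. Third, $p$ restricts to the identity on $U$: if $u\in U$ then $g^{-1}u\in U$, hence $p_0(g^{-1}u)=g^{-1}u$ and $(g\circ p_0\circ g^{-1})(u)=u$, so averaging $|G|$ copies of $u$ and dividing by $|G|$ gives $u$. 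The last two facts together give $p^2=p$.

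With $p$ in hand, I would apply Lemma~\ref{Vprojdecomp} to the underlying vector space to obtain $V=\im(p)\oplus\ker(p)$; the second and third facts identify $\im(p)=U$, and because $p$ is $G$-equivariant the subspace $W:=\ker(p)$ is a $G$-subspace. This $W$ is the desired $G$-stable complement. For the final clause, I would deduce that the regular module $kG$ is semisimple by induction on $\dim_k(kG)$: if $kG$ has no proper nonzero submodule it is simple; otherwise choose a proper nonzero submodule $U$, write $kG=U\oplus W$ by the part just proved (taking $V=kG$), and apply the inductive hypothesis to the strictly smaller $G$-spaces $U$ and $W$ to express each, hence $kG$, as a direct sum of simple submodules.

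I do not expect a genuine obstacle: the whole force of the statement is in the invertibility of $|G|$, and without it the averaged projection is simply undefined (and the conclusion is false). The only steps that merit care are the reindexing that shows $p$ is $G$-equivariant and the bookkeeping confirming $\im(p)=U$ exactly, rather than just $\im(p)\subseteq U$.
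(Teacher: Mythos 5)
Your proposal is correct and follows essentially the same averaging argument as the paper: pick a vector-space projection onto $U$, symmetrise by $\tfrac{1}{|G|}\sum_{g\in G}g\circ p_0\circ g^{-1}$, verify equivariance and idempotence, and apply Lemma~\ref{Vprojdecomp}. The only difference is that you also explicitly supply the inductive step deducing semisimplicity of $kG$, which the paper states as an ``equivalently'' but does not spell out.
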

\begin{proof}
There exists a vector space $W'$ of $V$ such that $V=U\oplus W'$ as vector spaces. Let $p:V\rightarrow V$ be a projection onto $U$ with the kernel $W$. So $p^2=p$ and $\im(p)=U$. Define $q:V\rightarrow V$ by $q=1/|G|\sum_{x\in G}xpx^{-1}$. That is, $q(v)=1/|G|\sum_{x\in G}xp(x^{-1}v)=1/|G|\sum_{x\in G}(xp)(v)$. We show that this is a linear map. Let $g\in G$ and $v\in V$. 
$(gq)v=g(q(g^{-1}v))=g(1/|G|\sum_{x\in G}xp(x^{-1}g^{-1}v))=1/|G|\sum_{x\in G}(gx)p((gx)^{-1}v)\\=1/|G|\sum_{y\in G}yp(y^{-1}v)=q(v).$
So $gq=q$, and this shows $q\in\Hom_G(V,V)$. Let $v\in V$. Then $q(v)=1/|G|\sum_{x\in G}xp(x^{-1}v)$. Then $x^{-1}v\in V$, $p(x^{-1}v)\in U$ as $U=\im(p)$, $xp(x^{-1}v)\in U$ as $U$ is a $G$-space, so $q(v)\in U$. This shows $\im(q)\subseteq U$.
Let $u\in U$. Then $q(u)=1/|G|\sum_{x\in G}xp(x^{-1}u)=1/|G|\sum_{x\in G}x(x^{-1}u)=1/|G|\sum_{x\in G}u=1/|G|\cdot|G|u=u$. For all $y\in U$, $p(y)=y$ since $p$ is a projection onto $U$, so $p(x^{-1}u)x^{-1}u$. This shows $U\subseteq \im(q)$. It follows $U=\im(q)$.
Also if $v\in V$, then $q(v)\in \im(q)=U$, so $q(q(v))=q(v)$. This shows $q^2=q$.
By \ref{Vprojdecomp}, $V=U\oplus \ker(q)$.
\end{proof}

If $\charr(k)=p$ divides $|G|$, the function $q$ in the proof cannot be defined. Therefore, Maschke's Theorem impllies that $kG$ is semisimple if and only if $\charr(k)$ divides the order of $G$.

\begin{Definition}\index{G-set}
A \textit{G-set} is a set $X$ together with a map $G\times X \rightarrow X$ defined by $(g,x) \mapsto gx$ such that $e x=x$ for all $x\in X$ and $(gh)x=g(hx)$ for all $g, h \in G$ and $x\in X$. Here, $e$ is the identity element of $G$.
\end{Definition}

\begin{Definition}\index{stabiliser}
If $x\in X$, $\Stab_G(x)=\{g\in G : gx=x\} \leq G$ is called the \textit{stabilizer} of $x$ under $G$.
\end{Definition}

\begin{Definition}
Let $x\in X$. The \textit{orbit}\index{orbit} of $x$ under $G$ is $x^G=\{gx: g\in G\}$.
\end{Definition}

\begin{Definition}
We say that $X$ is \textit{transitive}\index{transitive} if there exists only one orbit on $X$. In other words, for all $x,y\in X$, there exists $g\in G$ such that $y=gx$.
\end{Definition}

\begin{Proposition}\label{transitiveXisoG/H}
Suppose $X$ is a transitive $G$-set. Let $x\in X$. Then $X \cong G/H$ where $H=\Stab_G(x)$.
\end{Proposition}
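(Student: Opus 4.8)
The plan is to exhibit an explicit $G$-equivariant bijection between $G/H$ and $X$, where the action on $G/H$ is the left translation action $g\cdot(g'H) = (gg')H$. First I would fix the point $x \in X$ and define a map $\varphi \colon G/H \to X$ by $\varphi(gH) = gx$. The first thing to check is that $\varphi$ is well defined: if $gH = g'H$, then $g^{-1}g' \in H = \Stab_G(x)$, so $(g^{-1}g')x = x$, which gives $g'x = gx$; conversely this computation run backwards shows $\varphi$ is injective. Surjectivity is immediate from transitivity of $X$: given $y \in X$ there is $g \in G$ with $y = gx = \varphi(gH)$.

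Next I would verify that $\varphi$ is a morphism of $G$-sets, i.e.\ that it commutes with the two actions. For $h \in G$, we have $\varphi(h\cdot gH) = \varphi((hg)H) = (hg)x = h(gx) = h\cdot\varphi(gH)$, using only associativity of the $G$-action on $X$. Since $\varphi$ is a bijection that intertwines the actions, its inverse automatically intertwines the actions as well, so $\varphi$ is an isomorphism of $G$-sets, which is exactly the assertion $X \cong G/H$.

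I do not expect any serious obstacle here: the statement is the orbit--stabilizer correspondence in its cleanest form, and every step is a one-line verification from the axioms of a $G$-set and the definition of the stabilizer. The only point requiring a moment's care is phrasing well-definedness and injectivity of $\varphi$ as the two directions of the same equivalence ($gH = g'H \iff gx = g'x$), so that the argument is not duplicated. Since this appears in the appendix as a standard background fact, I would keep the write-up to a few lines and not belabor the coset-action conventions.
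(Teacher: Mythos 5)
Your proof is correct and uses exactly the same map $gH \mapsto gx$ that the paper's proof constructs; the paper checks well-definedness and then explicitly omits the verification that the map is a $G$-set morphism, injective, and surjective, which you supply. No differences of substance.
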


\begin{proof}
Define $f: G/H \rightarrow X$ by $f(gH)=gx$. Then the map is well-defined. If $g_1 H= g_2 H$, then there exists $h\in H=\Stab_G(x)$ such that $g_2=g_1h$. $f(g_2H)=g_2x=g_1hx=g_1x=f(g_1H)$. The proof that shows $f$ is a $G$-set map, is onto and one-to-one is omitted.
\end{proof}

\begin{Corollary}\label{transtiveGsetordequalasordGoverordStabx}
Let $X$ be a transitive $G$-set and $x\in X$. Then $$|X|=\frac{|G|}{|\Stab_G(x)|}.$$
\end{Corollary}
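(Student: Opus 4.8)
The plan is to reduce the statement to the isomorphism of $G$-sets already established in Proposition~\ref{transitiveXisoG/H}, and then invoke Lagrange's theorem. Set $H=\Stab_G(x)$. Since $X$ is transitive, Proposition~\ref{transitiveXisoG/H} gives a bijection $f\colon G/H\to X$, so $|X|=|G/H|$. As $G$ is finite, the cosets of $H$ partition $G$ into $[G:H]$ blocks each of size $|H|$, whence $|G/H|=[G:H]=|G|/|H|$. Combining the two equalities yields $|X|=|G|/|\Stab_G(x)|$, as claimed.

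First I would recall (or simply cite) that $f$ in Proposition~\ref{transitiveXisoG/H} is a bijection, so that counting $X$ is the same as counting left cosets of $H$ in $G$. Then I would observe that the left cosets $gH$ form a partition of $G$, and that left multiplication by $g$ gives a bijection $H\to gH$, so every coset has cardinality $|H|$; this is the standard Lagrange argument. Dividing $|G|$ by the common coset size $|H|$ gives the number of cosets, and the result follows.

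The only point needing care is the finiteness hypothesis: the identity $|X|=|G|/|\Stab_G(x)|$ is read as an equality of natural numbers, so implicitly $G$ (and hence $X$) is finite, which is consistent with the running assumption in the paper that all groups are finite. There is no real obstacle here — the content is entirely carried by Proposition~\ref{transitiveXisoG/H} and Lagrange's theorem, and this corollary is just the numerical shadow of that bijection.
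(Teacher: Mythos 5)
Your proof is correct and follows exactly the paper's route: cite the $G$-set isomorphism $X\cong G/\Stab_G(x)$ from Proposition~\ref{transitiveXisoG/H} and then apply Lagrange's theorem to convert $|G/H|$ into $|G|/|H|$. The only difference is that you spell out the Lagrange step explicitly, whereas the paper compresses it into the single chain $|X|=[G:\Stab_G(x)]=|G|/|\Stab_G(x)|$.
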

\begin{proof}
By \ref{transitiveXisoG/H}, $X \cong G/\Stab_G(x)$, so $|X|=[G:\Stab_G(x)]=|G|/|\Stab_G(x)|$.
\end{proof}

For any $G$-set $X$ and $x\in X$, the orbit $x^G$ is a $G$-set.

\begin{Theorem}[Orbit Stabilizer Theorem]
Let $x\in X$ where $X$ is a $G$-set. Then
$$|x^G|=\frac{|G|}{|\Stab_G(x)|.}$$
\end{Theorem}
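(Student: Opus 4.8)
The plan is to reduce immediately to the transitive case, which has already been handled in Corollary~\ref{transtiveGsetordequalasordGoverordStabx}. The key observation is that the orbit $x^G$ is itself a $G$-set under the restriction of the action: the remark preceding the statement already records that $x^G$ is a $G$-set, since for $g \in G$ and $y = hx \in x^G$ we have $gy = (gh)x \in x^G$, and the axioms $ex = x$, $(gh)x = g(hx)$ are inherited from $X$.

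The first step is to check that this $G$-set $x^G$ is \emph{transitive}. Indeed, given any two elements $g_1 x, g_2 x \in x^G$, the element $g = g_2 g_1^{-1} \in G$ satisfies $g \cdot (g_1 x) = g_2 x$, so there is a single orbit. The second step is to observe that the stabilizer of $x$ is the same whether we regard $x$ as an element of $X$ or of $x^G$: by definition $\Stab_G(x) = \{g \in G : gx = x\}$ depends only on $x$ and on the formula for the action, not on the ambient set, and $x \in x^G$. The third and final step is to apply Corollary~\ref{transtiveGsetordequalasordGoverordStabx} with the transitive $G$-set $x^G$ playing the role of $X$ and with the same basepoint $x$; this yields
\[
|x^G| = \frac{|G|}{|\Stab_G(x)|},
\]
as desired.

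There is essentially no obstacle here: the content of the theorem is already contained in the transitive case proved earlier (which in turn rests on Proposition~\ref{transitiveXisoG/H}, the bijection $G/\Stab_G(x) \cong X$). The only thing to be careful about is the bookkeeping in the second step — making sure the reader sees that restricting attention from $X$ to the sub-$G$-set $x^G$ does not alter the stabilizer — but this is a one-line observation rather than a genuine difficulty.
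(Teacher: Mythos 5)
Your proposal is correct and matches the paper's approach exactly: the paper's proof is the one-liner ``follows from Corollary~\ref{transtiveGsetordequalasordGoverordStabx},'' and you have simply spelled out the implicit reduction (the orbit $x^G$ is a transitive $G$-set with the same stabilizer of $x$). No gap, same route, just more detail.
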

\begin{proof}
The theorem follows from \ref{transtiveGsetordequalasordGoverordStabx}.
\end{proof}

\begin{Example}\label{GasaGsetunderconjugation}
$G$ is also a $G$-set under conjugation. We can set a $G$-set $X$ as $X=G$ and the action $g\in G$ on $x\in G$, $g*x$, as $gxg^{-1}$. Then the orbits are the conjugacy classes of $G$, $\Stab_G(g)=C_G(g)$.
\end{Example}

\begin{Corollary}\label{ordofconjugacyclassesofGisordofG/centerofg}
$|g^G|= [G: C_G(g)]$.
\end{Corollary}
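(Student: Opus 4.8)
The plan is to deduce this directly from the Orbit--Stabilizer Theorem, using the conjugation action described in Example~\ref{GasaGsetunderconjugation}. First I would take $X = G$ with the $G$-action $g * x = gxg^{-1}$, which is a $G$-set. Under this action the orbit of an element $g$ is exactly its conjugacy class $g^G = \{ xgx^{-1} : x \in G\}$, and the stabilizer of $g$ is by definition $\Stab_G(g) = \{ x \in G : xgx^{-1} = g\} = C_G(g)$, the centralizer of $g$ in $G$.

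Next I would apply the Orbit--Stabilizer Theorem to this action at the point $g$, which gives $|g^G| = |G| / |\Stab_G(g)|$. Substituting $\Stab_G(g) = C_G(g)$ yields $|g^G| = |G| / |C_G(g)| = [G : C_G(g)]$, as claimed.

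There is no real obstacle here: the work has all been done in the Orbit--Stabilizer Theorem and its underlying Proposition~\ref{transitiveXisoG/H} (a transitive $G$-set is isomorphic to $G/\Stab_G(x)$, applied to the orbit $g^G$, which is itself a transitive $G$-set). The only thing to be careful about is the bookkeeping identification $\Stab_G(g) = C_G(g)$, which is immediate from the definitions of stabilizer and centralizer. So the proof is a one-line invocation of the preceding theorem together with that identification.
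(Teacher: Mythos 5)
Your proof is correct and follows exactly the same route as the paper: apply the Orbit--Stabilizer Theorem to the conjugation action from Example~\ref{GasaGsetunderconjugation} and identify $\Stab_G(g)$ with $C_G(g)$. The paper's proof is just a one-line citation of these same two facts, so there is nothing to compare.
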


\begin{proof}
The corollary follows from the orbit stabiliser theorem and \ref{GasaGsetunderconjugation}.
\end{proof}

\section{Character Table of $C_3$}\label{chartabofC3}
\begin{table}[h]
\centering
\begin{tabular}{ c | c  c c  }
\hline
 \hline
  $g$   & $1$ & $g$ & $g^2$\\
   $|C_G(g)|$  & $3$ & $3$ & $3$\\
 \hline
 $\chi_1$   & 1    &1&   1\\
$\chi_{e^{\frac{\pi i}{3}}}$&   1  & $e^{\frac{\pi i}{3}}$   &$e^{\frac{2 \pi i}{3}}$\\
$\chi_{e^{\frac{2\pi i}{3}}}$&1 & $e^{\frac{2\pi i}{3}}$&  $e^{\frac{\pi i}{3}}$\\
 \hline
 \hline
\end{tabular}
\caption{The character table of $C_3$}
\end{table}

\newpage
\printindex

\end{document}